\documentclass[11pt]{article}
\usepackage{amsmath,amssymb,amsthm,amsfonts,amstext,amsbsy,amscd}
\usepackage[utf8]{inputenc}
\usepackage{amsfonts}
\usepackage{amssymb, amstext}
\usepackage{color}

\numberwithin{equation}{section}
\theoremstyle{plain}
\newtheorem{thm}{Theorem}

\newtheorem{corollary}{Corollary}
\newtheorem{lemma}{Lemma}
\newtheorem{assumption}{Assumption}

\theoremstyle{definition}

\theoremstyle{remark}
\newtheorem{remark}{Remark}

\newcommand{\SGN}{L}

\allowdisplaybreaks[4]

\sloppy

\begin{document}
\title{On the perturbation series for eigenvalues and eigenprojections}
\author{Martin Wahl\thanks{Humboldt-Universit\"{a}t zu Berlin, Germany. E-mail: martin.wahl@math.hu-berlin.de
\newline
\textit{2010 Mathematics Subject Classifcation.} 47A55, 15A42, 47H40, 62H25\newline
\textit{Key words and phrases.} perturbation series, perturbation bounds, reduced resolvent, covariance operator, polynomial chaos.}}
\date{}
\maketitle
\begin{abstract}
A standard perturbation result states that perturbed eigenvalues and eigenprojections admit a perturbation series provided that the operator norm of the perturbation is smaller than a constant times the corresponding eigenvalue isolation distance. In this paper, we show that the same holds true under a weighted condition, where the perturbation is symmetrically normalized by the square-root of the reduced resolvent. This weighted condition originates in random perturbations where it leads to significant improvements.
%We demonstrate the significance of such an improvement in the case of the empirical covariance operator.
\end{abstract}
%\tableofcontents
% 15A42 Inequalities involving eigenvalues and eigenvectors
% 15B52   	Random matrices
% 60H25   	Random operators and equations
% 47H40   	Random operators
% 60B20 Random matrices (probabilistic aspects; for algebraic aspects see 15B52)
% 62H25 Factor analysis and principal components; correspondence analysis
% 47A55 Perturbation theory [See also 47H14, 58J37, 70H09, 81Q15]
\section{Introduction}

The study of perturbation bounds for eigenvalues and eigenprojections has a long tradition in matrix analysis and operator theory; see e.g. Horn and Johnson \cite{HJ94}, Bhatia \cite{B97}, and Chatelin \cite{C83}. In many application (including statistics, computer science, numerical analysis) it is crucial to quantitatively estimate how far eigenvalues and eigenprojections can move if the matrix or operator of interest is subjected to a perturbation.

In this paper, we are concerned with self-adjoint, compact operators $\Sigma$ and $\hat\Sigma$ on a Hilbert space $\mathcal{H}$. We consider $\hat \Sigma$ as an approximation of $\Sigma$, and define the perturbation operator $E=\hat\Sigma-\Sigma$ so that $\hat \Sigma=\Sigma+E$. By the spectral theorem, there is a sequence $(\lambda_1,\lambda_2,\dots)$ of eigenvalues of $\Sigma$ (converging to zero), together with an orthonormal system of eigenvectors $u_1,u_2,\dots$ such that $\Sigma =\sum_{j\geq 1}\lambda_j u_j\otimes u_j$. Similarly, there exists a sequence $(\hat{\lambda}_1, \hat{\lambda}_2,\dots)$ of eigenvalues of $\hat\Sigma$, together with an orthonormal system of eigenvectors $\hat{u}_1,\hat{u}_2,\dots$ such that $\hat{\Sigma}=\sum_{j\ge 1}\hat{\lambda}_j\hat u_j\otimes\hat u_j$. For every $j\geq 1$, we abbreviate $P_j=u_j\otimes u_j$ and  $\hat P_j=\hat u_j\otimes\hat u_j$. For the sake of simplicity, we assume throughout the introduction that $\dim \mathcal{H}=d<\infty$, meaning that all eigenvalues can be ordered as $\lambda_1\geq \dots\geq  \lambda_d$ and $\hat{\lambda}_1\geq \dots\geq  \hat{\lambda}_d$.

A standard result in perturbation theory states that the $j$-th perturbed eigenvalue $\hat \lambda_j$ and its corresponding perturbed eigenprojection $\hat P_j$ admit a Taylor series in the perturbation $E$, provided that the operator norm of the perturbation is smaller than $1/2$ times the eigenvalue isolation distance of the $j$-th unperturbed eigenvalue $\lambda_j$, that is provided that $\|E\|_\infty/g_j<1/2$ with $g_j=\min(\lambda_{j-1}-\lambda_j,\lambda_j-\lambda_{j+1})$, see e.g. Theorem 3.9 in Kato \cite{K95}. Quantitative versions of this statement, such as a $k$-th order Taylor expansion for $\hat P_j$ with an estimate for the remainder in terms of  $(\|E\|_\infty/g_j)^{k+1}$, are used throughout the statistical literature to analyze spectral algorithms including functional and kernel PCA; see e.g. Koltchinskii \cite{K98}, Mas and Menneteau \cite{MM03}, El Karoui and d'Aspremont \cite{KA10}, Hsing and Eubank \cite{HE15}, and Koltchinskii and Lounici~\cite{KL16b}.

In this paper, we investigate the conditions under which perturbed eigenvalues and eigenprojections admit accurate Taylor approximations. We show that the condition $\|E\|_\infty/g_j< 1/2$ can be replaced by a weighted version, where the perturbation is symmetrically normalized by the square-root of the reduced resolvent. Specialized to simple eigenvalues, this condition reads as
\begin{equation}\label{EqNormCond}
\delta_j:=\left\Vert\left(|R_j|^{1/2}+g_j^{-1/2}P_j\right)E\left(|R_j|^{1/2}+g_j^{-1/2}P_j\right)\right\Vert_\infty<1/2,
\end{equation}
where $R_j=\sum_{k\neq j}(\lambda_k-\lambda_j)^{-1}P_k$ is the reduced resolvent of $\Sigma$ at $\lambda_j$. Clearly, we have $\delta_j\leq \|E\|_\infty/g_j$, so that we introduce an extended framework under which perturbation problems can be attacked. Moreover, the quantity $\delta_j$ explicitly appears in the remainder error bounds, implying many classical and new perturbation bounds.

Condition \eqref{EqNormCond} originates in random perturbations where it leads to significant improvements over standard approaches. There is indeed a vast literature on the operator norm of (infinite) random matrices, implying that the conditions $\|E\|_\infty/g_j<1/2$ (as well as $\|(|R_j|+g_j^{-1}P_j)E\|_\infty<1/2$) are significantly stronger than their symmetrized variant $\delta_j<1/2$ from \eqref{EqNormCond}; see e.g. Lata\l a \cite{L05}, Koltchinskii and Lounici \cite{KL14}, and Adamczak, Lata\l a and Meller \cite{ALM18}.

As an application we will specialize our results to the empirical covariance operator in the case of i.i.d. and sub-Gaussian observations. In this case we will see that Condition \eqref{EqNormCond} is closely linked to an eigenvalue separation property, allowing us to study empirical eigenvalues and eigenprojections in a (nearly) optimal range.
%, by combining concentration inequalities for $\delta_j$ with sharp moment bounds for polynomials chaos in the Karhunen-Loève coefficients.
For instance, in the case that the eigenvalues of $\Sigma$ decay exponentially, condition $\|E\|_\infty/g_j<1/2$ requires $j\leq c\log n$, while \eqref{EqNormCond} requires $j\leq cn$ (note that $\hat\Sigma$ is of rank $n$, hence all eigenprojections with index larger $n$ are non-unique). This large range will reveal a sharp phase transition, showing that the relative rank condition from \cite{JW18a,JW18b} is even necessary for an accurate first order perturbation expansion of $\hat\lambda_j$ in the case of i.i.d. and sub-Gaussian observations.

Improving standard perturbation results in the case of the empirical covariance operator has attracted interest recently. For instance, Mas and Ruymgaart \cite{MR15} combined the holomorphic functional calculus with a normalization argument to go beyond the condition  $\|E\|_\infty/g_j< 1/2$. Jirak and Wahl \cite{JW18a,JW18b} exploited the relative structure of the perturbation problem given by the empirical covariance operator, and proved general perturbation expansions for eigenvalues and eigenprojections in the relative rank setting. In contrast, we present a general perturbation-theoretic approach
%(irrespective of the particular structure of the perturbation)
that also leads to results beyond the conditions from \cite{JW18a,JW18b} when specialized to the empirical covariance operator. Our approach based on $\delta_j$ is similar to the one in Rei\ss{} and Wahl \cite{RW18} (where eigenprojections are studied in a different loss function, the so called excess risk), and extends their results to higher order expansions. Linear perturbation expansions based on $\delta_j$ are also used in \cite{JW19}, where quantitative limit theorems and bootstrap procedures are considered. Finally, improving absolute perturbation bounds based on $\|E\|_\infty$ is also subject in other branches of mathematics; see e.g. Ipsen \cite{I98} for relative perturbation bounds in numerical analysis, Belkin \cite{B18} for kernel operators used in machine learning problems, and Vu \cite{V11} and O'Rourke, Vu, and Wang \cite{RVW18} for random perturbations of low-rank matrices.

\subsection*{Further notation}
Let $(\mathcal{H},\langle\cdot,\cdot\rangle)$ be a separable Hilbert space of dimension $d\in\mathbb{N}\cup\{+\infty\}$ and let $\|\cdot\|$ denote the norm on $\mathcal{H}$, defined by $\|u\|=\sqrt{\langle u,u\rangle}$. As in the introduction, let $\Sigma$ be a self-adjoint, compact operator on $\mathcal{H}$, having spectral representation
\[
\Sigma =\sum_{j\ge 1}\lambda_j P_j,
\]
where $P_j=u_j\otimes u_j$. Here, for $u,v\in\mathcal{H}$ we denote by $u\otimes v$ the rank-one operator defined by $(u\otimes v)x=\langle v,x\rangle u$, $x\in \mathcal{H}$. For $j\geq 1$, let $g_j$ be the $j$-th spectral gap defined by $g_j=\min(\lambda_{j-1}-\lambda_j,\lambda_j-\lambda_{j+1})$ for $j\geq 2$ and $g_1=\lambda_1-\lambda_2$. Finally, for $j\geq 1$ such that $g_j>0$, the reduced resolvent at $\lambda_j$ is defined by
\[
R_j=\sum_{k\neq j}\frac{1}{\lambda_k-\lambda_j}P_k.
\]

Let $\hat\Sigma$ be another self-adjoint, compact operator on $\mathcal{H}$, having spectral representation
\[
\hat \Sigma =\sum_{j\ge 1}\hat \lambda_j \hat P_j,
\]
where $\hat P_j=\hat u_j\otimes \hat u_j$. We consider $\hat\Sigma$ as a perturbed version of $\Sigma$ and write $E=\hat\Sigma-\Sigma$ for the perturbation operator.

Since we want to compare perturbed and unperturbed eigenvalues and eigenprojections, we have to order eigenvalues accordingly. For the sake of notational simplicity, we assume throughout the paper that the eigenvalues of $\Sigma$ and $\hat\Sigma$ can be ordered in non-increasing order (meaning that $\lambda_1\geq \lambda_2\geq\dots$ and $\hat{\lambda}_1\geq \hat{\lambda}_2\geq\dots$), and that the orthonormal systems $u_1,u_2,\dots$ and $\hat u_1,\hat u_2,\dots$ are indeed orthonormal bases (meaning that $\sum_{j\geq 1}P_j=\sum_{j\geq 1}\hat P_j=I$). This imposes no restriction if $\dim \mathcal{H}=d<\infty$, while we restrict ourselves to positive operators if $d=\infty$ (to avoid ordering positive, negative and zero eigenvalues separately). This restriction is not essential since every statement can be obtained from the finite-dimensional case by approximation.

Given a bounded (resp. Hilbert-Schmidt) operator $A$ on $\mathcal{H}$, we write $\|A\|_\infty$ (resp. $\|A\|_2$) for the operator norm (resp. the  Hilbert-Schmidt norm). Given a trace class operator $A$ on $\mathcal{H}$, we denote the trace of $A$ by $\operatorname{tr}(A)$.

\section{Main results and some consequences}
\subsection{Error bounds for Taylor approximations}\label{SecTaylorExp}
Throughout this section, let $j\geq 1$ be such that $\lambda_j$ is a simple eigenvalue, meaning that $g_j>0$. Extensions to multiple eigenvalues are presented in Section \ref{SecMultEV}.

For every $n\geq 0$, set
\begin{equation*}
P_j^{(n)}=P_j^{(n)}(E)=(-1)^{n+1}\sum_{\substack{k_1,\dots,k_{n+1}\geq 0\\k_1+\dots+k_{n+1}=n}}R_j^{(k_1)}ER_j^{(k_2)}\cdots R_j^{(k_{n})}ER_j^{(k_{n+1})}
\end{equation*}
with $R_j^{(k)}=-P_j$ if $k=0$ and $R_j^{(k)}=R_j^k$ if $k>0$. For instance, we have
\[
P_j^{(0)}=P_j\quad\text{and}\quad P_j^{(1)}=-R_jEP_j-P_jER_j.
\]
It is well-known that the $P_j^{(n)}$ give the coefficients of a power series, valid in a small neighborhood of $\Sigma$. More precisely, letting $\Sigma(t)=\Sigma+t(\hat\Sigma-\Sigma)$ and $P_j(t)$ be the orthogonal projection onto the $j$-th eigenvector of $\Sigma(t)$, then the map $t\mapsto P_j(t)$ is well-defined and real analytic in a neighborhood of $0$, and we have $P_j^{(n)}= (1/n!)(d/dt)^n\mid_{t=0} P_j(t)$, see e.g. Chapter 2 in \cite{K95}.

We now introduce two quantities which will play a crucial role in what follows. First, let us recall $\delta_j$ from the introduction defined by
\[
\delta_j:=\Vert(|R_j|^{1/2}+g_j^{-1/2}P_j)E(|R_j|^{1/2}+g_j^{-1/2}P_j)\Vert_\infty.
\]
Second, a quantity more directly related to the $P_j^{(n)}$ is given by
\[
\delta_j':=\max(\Vert|R_j|^{1/2}E|R_j|^{1/2}\Vert_\infty,g_j^{-1/2}\Vert |R_j|^{1/2}E P_j\Vert_2,g_j^{-1}\Vert P_j E P_j\Vert_2).
\]
The two quantities $\delta_j'$ and $\delta_j$ are closely related. In fact, we have
\begin{equation}\label{EqEquivDefDelta}
\delta_j'\leq \delta_j\leq 2\delta_j',
\end{equation}
as can be seen by simple properties of the operator norm.

Our first main result is an error bound for the Taylor approximation of eigenprojections.

\begin{thm}\label{ThmTaylorSP}
For every $p\geq 1$, we have
\begin{equation}\label{EqTaylorSP}
\|\hat P_j-\sum_{n=0}^{p-1}P_j^{(n)}\|_2\leq   4g_j^{-1/2}\Vert P_jE|R_j|^{1/2}\Vert_2\frac{(4\delta_j')^{p-1}}{(1-2\delta_j)^2},
\end{equation}
provided that $\delta_j<1/2$.
\end{thm}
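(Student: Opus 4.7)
I use the classical Riesz contour-integral representation combined with a weighted factorization adapted to $\delta_j$. With $T(z)=(\Sigma-z)^{-1}$ and $\Gamma=\{z:|z-\lambda_j|=g_j/2\}$, the first task is to establish the remainder formula
\[
\hat P_j-\sum_{n=0}^{p-1}P_j^{(n)}=\frac{(-1)^{p+1}}{2\pi i}\oint_\Gamma (T(z)E)^p(\hat\Sigma-z)^{-1}\,dz,
\]
which requires showing that $\Gamma$ encloses $\hat\lambda_j$ and no other eigenvalue of $\hat\Sigma$. With $M_j=|R_j|^{1/2}+g_j^{-1/2}P_j$, I use the identity $I+tT(z)E=M_j\bigl(I+t(M_j^{-1}T(z)M_j^{-1})(M_jEM_j)\bigr)M_j^{-1}$ together with the diagonal computation
\[
\|T(z)M_j^{-1}\|_\infty\le 2g_j^{-1/2},\qquad \|M_j^{-1}T(z)M_j^{-1}\|_\infty\le 2\quad\text{on }\Gamma
\]
(which uses $|\lambda_k-z|\ge|\lambda_k-\lambda_j|/2$ for $k\neq j$ and $z\in\Gamma$) to reduce contour validity to the Neumann convergence criterion $2t\delta_j<1$, which holds for $t\in[0,1]$ under $\delta_j<1/2$; homotopy in $t$ then places $\hat\lambda_j$ (alone) inside $\Gamma$.

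Factoring $T(z)E=(T(z)M_j^{-1})(M_jEM_j)M_j^{-1}$, the integrand becomes
\[
T(z)M_j^{-1}\cdot (M_jEM_j)\cdot\bigl[(M_j^{-1}T(z)M_j^{-1})(M_jEM_j)\bigr]^{p-1}\cdot M_j^{-1}(\hat\Sigma-z)^{-1}.
\]
The trailing $M_j^{-1}(\hat\Sigma-z)^{-1}$ is handled by the second resolvent identity: one obtains $M_j^{-1}(\hat\Sigma-z)^{-1}=[I+(M_j^{-1}T(z)M_j^{-1})(M_jEM_j)]^{-1}M_j^{-1}T(z)$, with operator norm at most $2g_j^{-1/2}/(1-2\delta_j)$. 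Each of the $p-1$ interior factors $(M_j^{-1}T(z)M_j^{-1})(M_jEM_j)$ has operator norm at most $2\delta_j\le 4\delta_j'$.

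The final, delicate step extracts the HS factor $g_j^{-1/2}\|P_jE|R_j|^{1/2}\|_2$ from the leading $M_jEM_j$ slot (rather than the weaker $\|M_jEM_j\|_2$, which need not be controlled by $\delta_j'$). I decompose
\[
M_jEM_j=g_j^{-1}P_jEP_j+g_j^{-1/2}(P_jE|R_j|^{1/2}+|R_j|^{1/2}EP_j)+|R_j|^{1/2}E|R_j|^{1/2}
\]
into its four $P_j,P_j^\perp$ blocks. The three ``$P_j$-blocks'' are of rank at most one and, in HS norm, are bounded by $g_j^{-1/2}\|P_jE|R_j|^{1/2}\|_2$ and $g_j^{-1}\|P_jEP_j\|_2\le\delta_j'$; the pure $|R_j|^{1/2}E|R_j|^{1/2}$ block is handled in operator norm by $\delta_j'$. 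Exploiting the self-adjointness of the residual to pair the two cross-terms, one obtains the combined HS contribution bounded by $4g_j^{-1/2}\|P_jE|R_j|^{1/2}\|_2$, while the remaining $P_jEP_j$ and $|R_j|^{1/2}E|R_j|^{1/2}$ pieces are absorbed into an additional factor of $\delta_j'$ in the geometric chain. Integration over $\Gamma$ (of length $\pi g_j$) then yields the stated estimate.

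The main obstacle is this last HS-extraction step: upgrading the naive bound $(2\delta_j)^{p-1}\|M_jEM_j\|_2$ to the sharper form of the theorem requires simultaneously tracking the rank-one versus infinite-rank contributions within each block of $M_jEM_j$ throughout the product $C[BC]^{p-1}$, and using the symmetry of the residual to pair cross-terms with the right constant.
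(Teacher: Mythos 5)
Your contour-integral setup (the weighted factorization through $M_j=|R_j|^{1/2}+g_j^{-1/2}P_j$, the diagonal resolvent bounds on $\Gamma$, the homotopy argument placing $\hat\lambda_j$ alone inside $\Gamma$, and the second-resolvent-identity bound $\|M_j^{-1}(\hat\Sigma-z)^{-1}\|_\infty\le 2g_j^{-1/2}/(1-2\delta_j)$) is sound, but it only reproduces the weaker estimate $\|\hat P_j-\sum_{n=0}^{p-1}P_j^{(n)}\|\le 2(2\delta_j)^p/(1-2\delta_j)$ — which is exactly Corollary \ref{CorEPTaylorExpHFC}, proved in the appendix by precisely this method. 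The paper's own proof of Theorem \ref{ThmTaylorSP} instead uses an algebraic explicit-remainder formula (Lemma \ref{LemExplicitRem}), term-by-term bounds (Lemma \ref{LemCoeffBound}), and crucially the contraction bound $\||R_j|^{-1/2}\hat P_j\|_2\le\|{|R_j|^{1/2}EP_j}\|_2/(1-2\delta_j)$ (Lemma \ref{LemER}). The entire content of the theorem beyond the corollary is the prefactor $g_j^{-1/2}\|P_jE|R_j|^{1/2}\|_2\,\delta_j'^{\,p-1}$ in place of $\delta_j^p$, and that is exactly the step you flag as ``the main obstacle'' and do not carry out.

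The gap is not merely one of bookkeeping: the HS-extraction as sketched fails for one of the $4^p$ block contributions. When every factor $M_jEM_j$ in the chain contributes its $(I-P_j)(\cdot)(I-P_j)$ block, the resulting piece of the integrand is (up to commuting diagonal factors) $(T(z)(I-P_j)E(I-P_j))^{p}$ composed with $(\hat\Sigma-z)^{-1}$; it contains no rank-one factor and no factor with a controlled Hilbert--Schmidt norm, so a pointwise HS bound on $\Gamma$ is unavailable — indeed, when $E$ is merely compact the integrand need not be Hilbert--Schmidt at all, even though the contour integral is finite rank. To handle this piece you must evaluate the residue first (the only pole inside $\Gamma$ for this piece is at $z=\hat\lambda_j$, producing $\hat P_j$) and then control $\|X\hat P_j\|_2$ via $\hat P_j=P_j\hat P_j+|R_j|^{1/2}|R_j|^{-1/2}\hat P_j$ together with a bound on $\||R_j|^{-1/2}\hat P_j\|_2$ of the type of Lemma \ref{LemER}; this ingredient is absent from your proposal, and its absence also explains why your constant accounting produces only one factor of $(1-2\delta_j)^{-1}$ rather than the $(1-2\delta_j)^{-2}$ in the statement. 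Once you add the residue computation and the contraction lemma, you have essentially reconstructed the paper's explicit-remainder proof, with the contour integral serving only as an alternative derivation of Lemma \ref{LemExplicitRem}.
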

\begin{remark}
In the numerator $\delta_j'$ can be replaced by $\delta_j$, and in the denominator, $1-2\delta_j$ can be replaced by $1-4\delta_j'$, as can be seen by \eqref{EqEquivDefDelta}.
\end{remark}
\begin{remark}
Theorem \ref{ThmTaylorSP} yields $\|\hat P_j-\sum_{n=0}^{p-1}P_j^{(n)}\|_2\leq C {\delta_j'}^{p}$ with $C=4^{p+1}$, provided that $\delta_j\leq 1/4$. Moreover, the condition $\delta_j\leq 1/4$ can be dropped by increasing the constant $C$, cf. Corollary \ref{CorImprovedBound2} below. These bounds extend standard perturbation results from the statistical literature, bounding the left-hand side of \eqref{EqTaylorSP} by $C(\|E\|_\infty/g_j)^p$; see e.g. \cite{K98,HE15}.
\end{remark}

We now turn to the analysis of the eigenvalues. Set $\lambda_j^{(0)}=\lambda_j$ and for $n\geq 1$,
\[
\lambda_j^{(n)}=\operatorname{tr}(P_j^{(n-1)}E)+\operatorname{tr}(P_j^{(n)}R_j^{-1}).
\]
For, instance, we have
\[
\lambda_j^{(1)}=\operatorname{tr}(P_jEP_j)\quad\text{and}\quad \lambda_j^{(2)}=-\operatorname{tr}(P_jER_jEP_j).
\]
Our second main result is an error bound for the Taylor approximation of eigenvalues.
\begin{thm}\label{ThmTaylorEV}For every $p\geq 2$, we have
\begin{equation}
|\hat \lambda_j-\sum_{n=0}^{p-1}\lambda_j^{(n)}|\leq   12\Vert P_jE|R_j|^{1/2}\Vert_2^2\frac{(4\delta_j')^{p-2}}{(1-2\delta_j)^3},
\end{equation}
provided that $\delta_j<1/2$.
\end{thm}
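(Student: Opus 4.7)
My plan rests on the Kato-type identity $\lambda_j^{(n)} = \frac{1}{n}\operatorname{tr}(EP_j^{(n-1)})$ ($n\ge 1$), which follows from the Hellmann--Feynman formula $\dot\lambda_j(t) = \operatorname{tr}(EP_j(t))$ (a consequence of $\lambda_j(t)=\operatorname{tr}(\Sigma(t)P_j(t))$ together with $\operatorname{tr}(T(t)\dot P_j(t))=0$) and matching Taylor coefficients in $t$. Integrating from $0$ to $1$ yields the exact remainder
\begin{equation*}
\hat\lambda_j - \sum_{n=0}^{p-1}\lambda_j^{(n)} = \int_0^1 \operatorname{tr}\bigl(E\,B(s)\bigr)\,ds,\qquad B(s):=P_j(s)-\sum_{n=0}^{p-2}s^n P_j^{(n)},
\end{equation*}
where $P_j(s)$ is the eigenprojection of $\Sigma+sE$. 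The problem is thus reduced to estimating the trace inside the integral.

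The key is to extract \textit{two} factors of $\|P_jE|R_j|^{1/2}\|_2$ from $\operatorname{tr}(EB(s))$. On the $B(s)$-side, applying Theorem~\ref{ThmTaylorSP} to the perturbation $sE$ (under which $\delta_j\mapsto s\delta_j$, $\delta_j'\mapsto s\delta_j'$, and $\|P_j\,(sE)\,|R_j|^{1/2}\|_2=s\|P_jE|R_j|^{1/2}\|_2$) already supplies one such factor:
\begin{equation*}
\|B(s)\|_2 \le 4g_j^{-1/2}s^{p-1}\|P_jE|R_j|^{1/2}\|_2\,(4\delta_j')^{p-2}\,(1-2s\delta_j)^{-2}.
\end{equation*}
To extract the second factor from the $E$-side, I would decompose $E=P_jEP_j+P_jEQ+QEP_j+QEQ$ with $Q=I-P_j$, distribute over $B(s)$, and apply Hilbert--Schmidt Cauchy--Schwarz to each of the four surviving cross-terms after inserting compensating factors $|R_j|^{\pm 1/2}$ or $g_j^{\pm 1/2}P_j$. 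For the off-diagonal $QEP_j$ block the factorization $QEP_j=|R_j|^{-1/2}(|R_j|^{1/2}EP_j)$ produces directly the desired second factor $\|P_jE|R_j|^{1/2}\|_2$ paired with $\|B(s)|R_j|^{-1/2}\|_2$ on the remainder side; for the diagonal blocks one uses $\||R_j|^{1/2}E|R_j|^{1/2}\|_\infty\le\delta_j'$ and $g_j^{-1}\|P_jEP_j\|_\infty\le\delta_j'$, which are absorbed into an additional factor of $(1-2s\delta_j)^{-1}$ appearing in the trace estimate.

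To make this scheme work rigorously I would need sandwiched variants of Theorem~\ref{ThmTaylorSP}, specifically bounds on $\||R_j|^{1/2}B(s)|R_j|^{1/2}\|_1$, $\|B(s)|R_j|^{-1/2}\|_2$, and $g_j^{-1}\|P_jB(s)P_j\|_1$, each of the same shape $C\,s^{p-1}\|P_jE|R_j|^{1/2}\|_2\,(4\delta_j')^{p-2}/(1-2s\delta_j)^{2}$. These come from repeating the Neumann-series argument that underlies Theorem~\ref{ThmTaylorSP} while keeping the weighted terminal factors $|R_j|^{1/2}$ or $P_j$ throughout rather than collapsing them at the end. Combining these with the block decomposition of $E$ and evaluating $\int_0^1 s^{p-1}(1-2s\delta_j)^{-3}\,ds \le C(1-2\delta_j)^{-3}$ produces the stated bound, the constant $12$ arising by absorbing the absolute constants from the block decomposition and from Theorem~\ref{ThmTaylorSP}.

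The main obstacle is the second step: extracting two factors of $\|P_jE|R_j|^{1/2}\|_2$, rather than one plus a coarse $\|E\|_\infty$ bound, requires the sandwich manipulation of $E$'s four blocks together with the corresponding sandwich variants of Theorem~\ref{ThmTaylorSP}. The slightly weaker denominator $(1-2\delta_j)^{-3}$ relative to Theorem~\ref{ThmTaylorSP}'s $(1-2\delta_j)^{-2}$ is a consequence of absorbing the two diagonal blocks $P_jEP_j$ and $QEQ$ into the geometric-series factor via $\delta_j'$.
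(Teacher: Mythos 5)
Your starting point is legitimate and genuinely different from the paper's: the paper does not integrate the Hellmann--Feynman identity, but instead works from the algebraic identity $\hat\lambda_j-\lambda_j=\operatorname{tr}(\hat P_jE)+\operatorname{tr}(\hat P_jR_j^{-1})$ and substitutes the explicit remainder formula of Lemma~\ref{LemExplicitRem}, then bounds every term $\operatorname{tr}(ER_j^{(k_1)}E\cdots E\hat R_j^{(k)})$ separately using Lemmas~\ref{LemNumberCoeff}, \ref{LemCoeffBound} and \ref{LemER}. Your reduction to $\int_0^1\operatorname{tr}(EB(s))\,ds$ with $B(s)=P_j(s)-\sum_{n=0}^{p-2}s^nP_j^{(n)}$ is correct (analyticity of $\lambda_j(\cdot),P_j(\cdot)$ on $[0,1]$ follows from Lemma~\ref{LemEVSeparation} applied to $sE$, as in the appendix), and the homogeneity $\delta_j\mapsto s\delta_j$ is used correctly. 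You have also correctly identified the crux: one must extract \emph{two} factors of $\Vert P_jE|R_j|^{1/2}\Vert_2$ from $\operatorname{tr}(EB(s))$, and the block decomposition of $E$ with compensating weights is the right device for that.

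The gap is that the ``sandwiched variants'' you invoke are the entire technical content of the proof, and as stated they are not the right bounds. First, the three quantities cannot all have ``the same shape'': $\Vert B(s)|R_j|^{-1/2}\Vert_2$ scales like $\Vert P_jE|R_j|^{1/2}\Vert_2$, but $g_j^{-1}\Vert P_jB(s)P_j\Vert_1$ and the trace-norm sandwich are dimensionless-over-$g_j$, so a bound of the form $C\,s^{p-1}\Vert P_jE|R_j|^{1/2}\Vert_2(4\delta_j')^{p-2}(1-2s\delta_j)^{-2}$ cannot hold for all three. Second, and more importantly, for the diagonal block the pairing $|\operatorname{tr}(P_jEP_j\cdot B)|\le\Vert P_jEP_j\Vert_\infty\Vert P_jBP_j\Vert_1\le g_j\delta_j'\Vert P_jBP_j\Vert_1$ combined with a bound on $\Vert P_jBP_j\Vert_1$ containing only \emph{one} factor of $\Vert P_jE|R_j|^{1/2}\Vert_2$ leaves a stray factor $g_j^2\delta_j'/\Vert P_jE|R_j|^{1/2}\Vert_2$ relative to the target, which is unbounded when $\delta_j'$ is driven by $g_j^{-1}\Vert P_jEP_j\Vert_2$ rather than by the off-diagonal blocks. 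The correct statement is that $\Vert P_jB(s)P_j\Vert_1$ itself carries \emph{two} factors, e.g.\ for $p=2$ one has $\Vert P_j(I-P_j(s))P_j\Vert_1=\Vert(I-P_j)P_j(s)\Vert_2^2\le g_j^{-1}s^2\Vert P_jE|R_j|^{1/2}\Vert_2^2(1-2s\delta_j)^{-2}$ via Lemma~\ref{LemER}; similarly the $QEQ$ block requires $\Vert|R_j|^{-1/2}B(s)|R_j|^{-1/2}\Vert_1$ (negative powers, not $|R_j|^{1/2}$ as you wrote) with two extracted factors. Proving these corrected sandwiches in the needed generality is exactly the term-by-term analysis of the explicit remainder that the paper performs (splitting into $k\le0$ and $k\ge1$, inserting $I=P_j+|R_j|^{1/2}|R_j|^{-1/2}$ on both sides, and using the cyclicity of the trace together with \eqref{EqCoeffBoundEVRem}); until that is done, your argument does not close.
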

\begin{remark}
The case $p=1$ can be deduced from the case $p=2$, see Corollary \ref{PertBoundEV} below.
\end{remark}

\subsection{Perturbation series under a symmetrically weighted condition}\label{SecPertSeries}
The following corollary shows that the condition $\delta_j'<1/4$ suffices to guarantee that the $j$-th perturbed eigenvalue and eigenprojection admit series representations in the perturbation $E$, cf. page 76 in \cite{K95} or \cite{C83}.
\begin{corollary}\label{CorPertSeries}
If $\delta_j'<1/4$, then the series of real numbers $\sum_{n=0}^\infty \lambda_j^{(n)}$  and the series of Hilbert-Schmidt operators $\sum_{n=0}^\infty P_j^{(n)}$ converge absolutely  and we have
\[
\hat \lambda_j=\sum_{n=0}^\infty \lambda_j^{(n)}\qquad\text{and}\qquad\hat P_j=\sum_{n=0}^\infty P_j^{(n)}.
\]
\end{corollary}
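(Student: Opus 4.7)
The plan is to deduce everything from Theorems \ref{ThmTaylorSP} and \ref{ThmTaylorEV} by a telescoping argument.

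First, I would observe that the hypothesis $\delta_j'<1/4$ combined with \eqref{EqEquivDefDelta} yields $\delta_j\leq 2\delta_j'<1/2$, so both Theorems \ref{ThmTaylorSP} and \ref{ThmTaylorEV} are applicable. Write $S_p=\sum_{n=0}^{p-1}P_j^{(n)}$ and $s_p=\sum_{n=0}^{p-1}\lambda_j^{(n)}$. Theorem \ref{ThmTaylorSP} immediately gives $\|\hat P_j-S_p\|_2\leq C(4\delta_j')^{p-1}$, where
\[
C=\frac{4g_j^{-1/2}\Vert P_jE|R_j|^{1/2}\Vert_2}{(1-2\delta_j)^2}
\]
is a finite constant independent of $p$. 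Since $4\delta_j'<1$, the right-hand side tends to $0$, proving $S_p\to\hat P_j$ in Hilbert--Schmidt norm. Analogously, Theorem \ref{ThmTaylorEV} yields $|\hat \lambda_j-s_p|\leq C'(4\delta_j')^{p-2}$ for $p\geq 2$, with $C'$ finite and independent of $p$; hence $s_p\to\hat\lambda_j$ in $\mathbb{R}$.

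Next, I would establish absolute convergence by comparing consecutive Taylor remainders. For $n\geq 1$, the identity $P_j^{(n)}=S_{n+1}-S_n$ and the triangle inequality give
\[
\|P_j^{(n)}\|_2\leq \|\hat P_j-S_{n+1}\|_2+\|\hat P_j-S_{n}\|_2\leq C\bigl((4\delta_j')^{n}+(4\delta_j')^{n-1}\bigr)\leq 2C(4\delta_j')^{n-1}.
\]
Together with $\|P_j^{(0)}\|_2=\|P_j\|_2=1$, the geometric series $\sum_{n\geq 0}(4\delta_j')^{n-1}$ bounds $\sum_{n\geq 0}\|P_j^{(n)}\|_2$ and shows absolute convergence of the operator series in Hilbert--Schmidt norm. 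The same telescoping argument applied to the scalar remainders furnishes $|\lambda_j^{(n)}|\leq 2C'(4\delta_j')^{n-2}$ for $n\geq 2$, so $\sum_{n\geq 0}|\lambda_j^{(n)}|<\infty$.

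Combining absolute convergence with the already established convergence of partial sums $S_p\to\hat P_j$ and $s_p\to\hat\lambda_j$ gives the two identities of the corollary. I do not anticipate any serious obstacle here: once Theorems \ref{ThmTaylorSP} and \ref{ThmTaylorEV} are in hand, the argument is the standard conversion of a uniform remainder estimate with geometric decay into absolute convergence via telescoping, with the only bookkeeping point being the inequality $\delta_j\leq 2\delta_j'$ used to translate the hypothesis $\delta_j'<1/4$ into the hypothesis $\delta_j<1/2$ required by the two theorems.
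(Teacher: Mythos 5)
Your proof is correct, and the main line — deducing the convergence of the partial sums to $\hat P_j$ and $\hat\lambda_j$ from the remainder bounds of Theorems \ref{ThmTaylorSP} and \ref{ThmTaylorEV}, after translating $\delta_j'<1/4$ into $\delta_j\le 2\delta_j'<1/2$ via \eqref{EqEquivDefDelta} — is exactly the paper's. The one place you diverge is the absolute-convergence step: the paper obtains $\|P_j^{(n)}\|_2\le g_j^{-1/2}\Vert P_jE|R_j|^{1/2}\Vert_2\,4^n{\delta_j'}^{n-1}$ directly from the enumerative Lemmas \ref{LemNumberCoeff} and \ref{LemCoeffBound} (counting the terms in $P_j^{(n)}$ and bounding each one), whereas you recover an equivalent geometric bound $\|P_j^{(n)}\|_2\le 2C(4\delta_j')^{n-1}$ by telescoping consecutive remainders, $P_j^{(n)}=S_{n+1}-S_n$. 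Your route is self-contained given only the two theorems and avoids re-invoking the term-by-term lemmas; the paper's route gives slightly more explicit constants (which it then reuses in Corollary \ref{CorTaylorExpPS}), but for the purposes of this corollary the two are interchangeable. The only bookkeeping points — finiteness of $C$ and $C'$ (guaranteed since $\delta_j'<\infty$ forces $\Vert P_jE|R_j|^{1/2}\Vert_2<\infty$) and the separate treatment of the low-order terms $P_j^{(0)}$, $\lambda_j^{(0)}$, $\lambda_j^{(1)}$ — are all handled.
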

Corollary \ref{CorPertSeries} is a consequence of Theorems \ref{ThmTaylorSP} and \ref{ThmTaylorEV} (in combination with Lemmas \ref{LemNumberCoeff} and \ref{LemCoeffBound} below to get absolute convergence of the series). Note that we have to replace the condition $\delta_j<1/2$ by the slightly stronger assumption $\delta_j'<1/4$ in order to ensure that the error terms in Theorems \ref{ThmTaylorSP} and \ref{ThmTaylorEV} converge to zero.

Conversely, the perturbation series can be used to get estimates for remainder terms. This can be done by computing the number of terms in the definition of the $P_j^{(n)}$ and $\lambda_j^{(n)}$ combined with term by term bounds (often called enumerative method). While the following result basically follows from Theorems \ref{ThmTaylorSP} and \ref{ThmTaylorEV}, it is streamlined in terms of constants and follows by a simple application of Corollary \ref{CorPertSeries}.

\begin{corollary}\label{CorTaylorExpPS}
Suppose that $\delta_j'<1/4$. Then, for each $p\geq 1$, we have
\begin{equation*}
\|\hat P_j-\sum_{n=0}^{p-1}P_j^{(n)}\|_2\leq  4g_j^{-1/2}\Vert P_jE|R_j|^{1/2}\Vert_2\frac{(4\delta_j')^{p-1}}{1-4\delta_j'}\leq  \frac{(4\delta_j')^{p}}{1-4\delta_j'}.
\end{equation*}
Moreover, for each $p\geq 2$, we have
\begin{equation*}
|\hat \lambda_j-\sum_{n=0}^{p-1}\lambda_j^{(n)}|\leq 8\Vert P_jE|R_j|^{1/2}\Vert_2^2 \frac{(4\delta_j')^{p-2}}{1-4\delta_j'}\leq g_j \frac{(4\delta_j')^{p}}{1-4\delta_j'}.
\end{equation*}

\end{corollary}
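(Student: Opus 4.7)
The plan is to obtain the result by combining Corollary \ref{CorPertSeries} with term-by-term bounds on the individual coefficients $P_j^{(n)}$ and $\lambda_j^{(n)}$, rather than going back through the machinery used to establish Theorems \ref{ThmTaylorSP} and \ref{ThmTaylorEV}. First I would invoke Corollary \ref{CorPertSeries} to rewrite the remainders as tails of convergent series,
\[
\hat P_j-\sum_{n=0}^{p-1}P_j^{(n)}=\sum_{n=p}^{\infty}P_j^{(n)},\qquad \hat\lambda_j-\sum_{n=0}^{p-1}\lambda_j^{(n)}=\sum_{n=p}^{\infty}\lambda_j^{(n)},
\]
and then apply the triangle inequality (in the Hilbert-Schmidt norm, respectively in absolute value) to move the norms inside.

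The next step is to invoke the two enumerative lemmas referenced in the statement of Corollary \ref{CorPertSeries}, namely Lemmas \ref{LemNumberCoeff} and \ref{LemCoeffBound}. The first counts the number of monomials $R_j^{(k_1)}ER_j^{(k_2)}\cdots ER_j^{(k_{n+1})}$ appearing in $P_j^{(n)}$; this multiplicity is $\binom{2n}{n}\le 4^n$. The second provides the term-by-term Hilbert-Schmidt bound for each such monomial, obtained by inserting factorizations $|R_j|=|R_j|^{1/2}|R_j|^{1/2}$ (and rewriting $-P_j=g_j^{-1/2}P_j\cdot g_j^{1/2}$ whenever a $R_j^{(0)}$ appears) so that each of the $n$ copies of $E$ gets sandwiched into one of the three basic quantities appearing in $\delta_j'$. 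Combining the count with the per-monomial bound should yield estimates of the form
\[
\|P_j^{(n)}\|_2\le 4\,g_j^{-1/2}\|P_jE|R_j|^{1/2}\|_2\,(4\delta_j')^{n-1}\quad (n\ge 1),
\]
\[
|\lambda_j^{(n)}|\le 8\,\|P_jE|R_j|^{1/2}\|_2^{2}\,(4\delta_j')^{n-2}\quad (n\ge 2),
\]
where the factor $\|P_jE|R_j|^{1/2}\|_2$ (or its square) is singled out to reflect the two outermost $E$'s, while the remaining $n-1$ (resp.\ $n-2$) $E$'s each contribute a factor of $4\delta_j'$.

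With these per-term bounds in hand, I would sum the geometric series
\[
\sum_{n=p}^{\infty}(4\delta_j')^{n-1}=\frac{(4\delta_j')^{p-1}}{1-4\delta_j'},
\]
valid under $\delta_j'<1/4$, to obtain the first of the two inequalities in each displayed bound. For the second (simplified) inequality in each displayed bound, I would invoke the elementary observation that, since $E=E^{\ast}$,
\[
g_j^{-1/2}\|P_jE|R_j|^{1/2}\|_2=g_j^{-1/2}\||R_j|^{1/2}EP_j\|_2\le \delta_j'
\]
by the very definition of $\delta_j'$. Substituting this into the projection bound gives $(4\delta_j')^p/(1-4\delta_j')$, and into the eigenvalue bound gives $8g_j(\delta_j')^2(4\delta_j')^{p-2}/(1-4\delta_j')=g_j(4\delta_j')^p/(2(1-4\delta_j'))$, which is dominated by $g_j(4\delta_j')^p/(1-4\delta_j')$.

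The main obstacle is not algebraic but bookkeeping: one must make sure, in applying Lemma \ref{LemCoeffBound} term by term, that exactly two of the $n$ copies of $E$ are placed in positions adjacent to a $P_j$-type factor (to form the singled-out $\|P_jE|R_j|^{1/2}\|_2$ or its square) while the remaining $n-1$ or $n-2$ copies of $E$ fall between $|R_j|^{1/2}$ factors (to produce $\delta_j'$); any monomial in which no $P_j$ appears at the boundary must be treated by a separate argument of the same type. Once those cases are aggregated and bounded by the combinatorial count of Lemma \ref{LemNumberCoeff}, the geometric summation and the elementary inequality $g_j^{-1/2}\|P_jE|R_j|^{1/2}\|_2\le\delta_j'$ deliver both stated bounds.
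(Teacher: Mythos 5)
Your proposal is correct and follows essentially the same route as the paper: both combine Corollary \ref{CorPertSeries} with the triangle inequality and the term-by-term bounds from Lemmas \ref{LemNumberCoeff} and \ref{LemCoeffBound} to get $\|P_j^{(n)}\|_2\leq 4g_j^{-1/2}\Vert P_jE|R_j|^{1/2}\Vert_2(4\delta_j')^{n-1}$ and $|\lambda_j^{(n)}|\leq 8\Vert P_jE|R_j|^{1/2}\Vert_2^2(4\delta_j')^{n-2}$, then sum the geometric series and use $g_j^{-1/2}\Vert P_jE|R_j|^{1/2}\Vert_2\leq\delta_j'$ for the simplified bounds. The bookkeeping concern you raise is already resolved by the second assertion of Lemma \ref{LemCoeffBound}, since every tuple with $k_1+\dots+k_{n+1}=n$ necessarily has some $k_a=0$.
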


\begin{remark}
A version of the second inequalities with $4\delta_j'$ replaced by $2\delta_j$ can be deduced from the holomorphic functional calculus (in combination with the eigenvalue separation from Lemma \ref{LemEVSeparation}, see Appendix \ref{SecHFC} for the details).
%Yet, it seems to be difficult to obtain the slightly more precise first inequalities with this approach.
\end{remark}
\subsection{Tight perturbation bounds}\label{SecTightPB}
In this section, we use our main results to obtain some perturbation bounds for eigenvalues and eigenprojections. These bounds are close to optimal and go beyond perturbation bounds based on $\|E\|_\infty$, such as $|\hat\lambda_j-\lambda_j|\leq \|E\|_\infty$ and $\|\hat P_j-P_j\|_\infty\leq 2\sqrt{2}\|E\|_\infty/g_j$; see e.g. \cite{HJ94,B97}.

First, Theorem \ref{ThmTaylorEV} applied with $p=2$, the fact that $|\lambda_j^{(1)}|=|\operatorname{tr}(P_jEP_j)|= \|P_jEP_j\|_2$, and the triangular inequality yield the following perturbation bound for eigenvalues.
\begin{corollary}\label{PertBoundEV} If $\delta_j\leq 1/2-\epsilon$, $\epsilon\in(0,1/2)$, then there is a constant $C>0$ depending only on $\epsilon$ such that
\[
|\hat \lambda_j-\lambda_j|\leq \|P_jEP_j\|_2+C\Vert P_jE|R_j|^{1/2}\Vert_2^2.
\]
\end{corollary}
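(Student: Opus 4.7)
The plan is to apply Theorem \ref{ThmTaylorEV} at the smallest nontrivial order $p=2$, then use the triangle inequality together with the explicit form of $\lambda_j^{(1)}$. First I would instantiate Theorem \ref{ThmTaylorEV} with $p=2$, noting that $(4\delta_j')^{p-2}=1$ and that $\sum_{n=0}^{p-1}\lambda_j^{(n)}=\lambda_j+\lambda_j^{(1)}$. Since the hypothesis $\delta_j\leq 1/2-\epsilon$ ensures $\delta_j<1/2$, Theorem \ref{ThmTaylorEV} gives
\[
\bigl|\hat\lambda_j-\lambda_j-\lambda_j^{(1)}\bigr|\leq\frac{12\,\Vert P_jE|R_j|^{1/2}\Vert_2^{\,2}}{(1-2\delta_j)^3}.
\]

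Next I would identify the linear term $|\lambda_j^{(1)}|$ with $\Vert P_jEP_j\Vert_2$. Because $P_j=u_j\otimes u_j$ is a rank-one orthogonal projection, the operator $P_jEP_j=\langle u_j,Eu_j\rangle\,u_j\otimes u_j$ is self-adjoint of rank one, so its Hilbert-Schmidt norm equals $|\langle u_j,Eu_j\rangle|=|\operatorname{tr}(P_jEP_j)|=|\lambda_j^{(1)}|$. The triangle inequality
\[
|\hat\lambda_j-\lambda_j|\leq|\lambda_j^{(1)}|+\bigl|\hat\lambda_j-\lambda_j-\lambda_j^{(1)}\bigr|,
\]
combined with the bound $1-2\delta_j\geq 2\epsilon$ implied by the hypothesis, yields the stated conclusion with an explicit constant $C=12/(2\epsilon)^3=3/(2\epsilon^3)$ depending only on $\epsilon$.

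There is no genuine obstacle here: the corollary is a short bookkeeping consequence of the second-order expansion already provided. The only verification beyond direct substitution is the rank-one identity $\Vert P_jEP_j\Vert_2=|\operatorname{tr}(P_jEP_j)|$, which is immediate from the spectral representation of the self-adjoint rank-one operator $P_jEP_j$, so I would expect the whole argument to take only a few lines.
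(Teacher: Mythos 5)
Your proposal is correct and follows exactly the route the paper indicates: Theorem \ref{ThmTaylorEV} with $p=2$, the rank-one identity $|\lambda_j^{(1)}|=|\operatorname{tr}(P_jEP_j)|=\|P_jEP_j\|_2$, and the triangle inequality, with $1-2\delta_j\geq 2\epsilon$ supplying the constant. Nothing is missing.
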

\begin{remark}
Corollary \ref{PertBoundEV} is close to optimal. It gives the absolute value of the linear perturbation term plus a remainder equals to $C\operatorname{tr}(P_jE|R_j|EP_j)$. The latter term differs from the quadratic perturbation term only by the absolute value of the resolvent.
\end{remark}
Concerning eigenprojections, Theorem \ref{ThmTaylorEV} applied with $p=1$ gives the perturbation bound $\|\hat P_j-P_j\|_2\leq Cg_j^{-1/2}\Vert |R_j|^{1/2}EP_j\Vert_2$. Yet, the linear term in the perturbation series is $-R_jEP_j-P_jER_j$, meaning that we would rather desire an upper bound $C\|R_jEP_j\|$. This discrepancy can be further removed by a more detailed analysis of higher-order perturbation expansions or the perturbation series.

\begin{corollary}\label{CorImprovedBound}
If $\delta_j'\leq 1/4-\epsilon$, $\epsilon\in(0,1/4)$, then there is a constant $C>0$ depending only on $\epsilon$ such that
\begin{equation*}
\|\hat P_j-P_j\|_2\leq C\sum_{m=1}^\infty\|(R_jE)^mP_j\|_2.
\end{equation*}
\end{corollary}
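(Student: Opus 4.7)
The plan is to apply Corollary~\ref{CorPertSeries}, which under the hypothesis $\delta_j' < 1/4$ produces the absolutely convergent expansion $\hat P_j - P_j = \sum_{n \geq 1} P_j^{(n)}$ in Hilbert--Schmidt norm. Since $\hat P_j$ and $P_j$ are rank-one orthogonal projections, a direct computation of traces gives the identity $\|\hat P_j - P_j\|_2 = \sqrt{2}\,\|\hat P_j P_j^{\perp}\|_2$, so it suffices to bound $\|\hat P_j P_j^\perp\|_2 \leq \sum_{n \geq 1} \|P_j^{(n)} P_j^{\perp}\|_2$. Multiplying a term of $P_j^{(n)}$ on the right by $P_j^{\perp}$ annihilates every summand ending in $R_j^{(0)} = -P_j$, so the surviving tuples $(k_1,\dots,k_{n+1})$ satisfy $\sum k_i = n$ and $k_{n+1} \geq 1$.

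For such a term $T'$, let $i_1 < \dots < i_\ell$ denote the positions where $k_i = 0$ (necessarily $\ell \geq 1$ and $i_\ell \leq n$). Writing $T' = \pm B_0 P_j B_1 P_j \cdots P_j B_\ell$, where all exponents inside the blocks $B_0, B_1,\dots,B_\ell$ are $\geq 1$, the rank-one nature of $P_j$ makes each middle block satisfy $P_j B_s P_j = c_s P_j$ for a scalar $c_s$, so $T' = \pm(\prod_{s=1}^{\ell-1} c_s)\, B_0 P_j B_\ell$ and therefore $\|T'\|_2 = |\prod_s c_s|\,\|B_0 P_j\|_2\,\|P_j B_\ell\|_2$ (using that $B_0 P_j B_\ell$ has rank at most one). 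The three factors are controlled by the splitting $R_j^k = |R_j|^{1/2} M_k |R_j|^{1/2}$ with $\|M_k\|_\infty \leq g_j^{-(k-1)}$ for $k \geq 1$, combined with $\||R_j|^{1/2} E |R_j|^{1/2}\|_\infty \leq \delta_j'$ and $\||R_j|^{1/2} E P_j\|_2 \leq g_j^{1/2}\delta_j'$; these yield $|c_s| \leq g_j^{m_s - K_s}(\delta_j')^{m_s}$ and $\|B_0 P_j\|_2 \leq g_j^{-(K_0 - m_0)}(\delta_j')^{m_0}$, where $m_\bullet$ counts the $E$'s and $K_\bullet$ the total $R_j$-power in the respective block.

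The crucial step is to extract a factor $\|(R_j E)^{m^{**}} P_j\|_2$ from $\|P_j B_\ell\|_2$. Define $m^{**} \in \{1,\dots,m_\ell\}$ to be one plus the number of consecutive $1$'s in $k_{i_\ell+1}, k_{i_\ell+2},\dots$, capped at $m_\ell$. Factoring one extra $R_j$ out of the first non-unit exponent (or using all of $B_\ell$ in the all-ones case) writes $P_j B_\ell = P_j (E R_j)^{m^{**}} \cdot \mathrm{(suffix)}$, and the same splitting argument bounds the suffix in operator norm by $g_j^{-(K_\ell - m_\ell)}(\delta_j')^{m_\ell - m^{**}}$. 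Using $\|P_j B_\ell\|_2 \leq \|(R_j E)^{m^{**}} P_j\|_2\,\|\mathrm{suffix}\|_\infty$ and the identity $\sum_{s=1}^{\ell-1} m_s + m_0 + m_\ell = n = \sum_{s=1}^{\ell-1} K_s + K_0 + K_\ell$, the $g_j$-exponents cancel exactly, yielding
\[
\|T'\|_2 \leq (\delta_j')^{n - m^{**}(T')}\,\|(R_j E)^{m^{**}(T')} P_j\|_2.
\]

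Grouping the surviving terms of $P_j^{(n)} P_j^{\perp}$ by $m^{**} = m$ gives $\|P_j^{(n)} P_j^{\perp}\|_2 \leq \sum_{m=1}^{n} N_{n,m}(\delta_j')^{n-m}\,\|(R_j E)^m P_j\|_2$ for combinatorial counts $N_{n,m}$. The main obstacle is showing $N_{n,m} \leq C\, 4^{n-m}$: the condition $m^{**} = m$ fixes at least $m-1$ consecutive entries to $1$ (together with either a $\geq 2$ entry or a terminating $1$) and forces a contribution of at least $m$ to $\sum k_i = n$, so the free portion of the tuple is a composition of at most $n - m + O(1)$ into at most $n - m + O(1)$ non-negative parts, of which there are at most $\binom{2(n-m)}{n-m} \leq 4^{n-m}$. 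Interchanging the summations and using $\sum_{n \geq m} 4^{n-m}(\delta_j')^{n-m} = 1/(1-4\delta_j') \leq 1/(4\epsilon)$ (uniform in $m$) gives $\|\hat P_j - P_j\|_2 \leq \sqrt{2}\sum_n \|P_j^{(n)} P_j^{\perp}\|_2 \leq C(\epsilon) \sum_{m \geq 1} \|(R_j E)^m P_j\|_2$, as required.
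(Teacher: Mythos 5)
Your proposal is correct and follows essentially the same route as the paper's proof: invoke Corollary \ref{CorPertSeries}, decompose each summand of $P_j^{(n)}$ into blocks at the positions where $R_j^{(k_a)}=-P_j$, extract a single factor $\|(R_jE)^mP_j\|_2$ while bounding everything else by powers of $\delta_j'$ via the splitting $R_j^k=|R_j|^{1/2}M_k|R_j|^{1/2}$, and then count tuples by $O(4^{n-m})$ and sum the geometric series. The preliminary reduction $\|\hat P_j-P_j\|_2=\sqrt{2}\,\|\hat P_jP_j^{\perp}\|_2$ and the extraction of $(R_jE)^{m^{**}}$ from the rightmost block are minor streamlinings rather than a different method; note also that any bound $N_{n,m}\le \mathrm{poly}(n-m)\,4^{n-m}$ (as the paper's $2(n-m+1)4^{n-m}$) already suffices, so the slight looseness in your combinatorial count is harmless.
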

\begin{remark}
While the condition $\delta_j'<1/4$ implies a strong contraction property for perturbed eigenvalues (it gives a bound which merely includes the first and second perturbation terms), Corollary \ref{CorImprovedBound} still contains an infinite series.

Invoking $\delta_j'<1/4$, we again obtain Theorem \ref{ThmTaylorSP} with $q=1$. An interesting question is to determine under which assumptions the infinite sum in Corollary \ref{CorImprovedBound} can be reduced to $C\|R_jEP_j\|_2$. Simple proposals are (cf. \cite{K95,C83})
\[
\|R_j\|_\infty\|E \|_\infty\leq c<1\qquad\text{or}\qquad \||R_j|E\|_\infty\leq c.
\]
While these conditions seems in general comparable to the symmetrized variant $\Vert|R_j|^{1/2}E|R_j|^{1/2}\Vert_\infty<c$, we try to avoid them since they require significant stronger conditions in the case of random perturbations; see e.g. \cite{L05,KL14,vH15}. As an illustration of this phenomenon note in the case that $E\in \mathbb{R}^{p\times p}$ is a GOE matrix, both, $\mathbb{E}\|R_j\|_\infty\|E\|_\infty$ and $\mathbb{E}\||R_j|E\|_\infty$ are bounded by $C\|R_j\|_\infty\sqrt{p}$, while $\mathbb{E}\delta_j$ is bounded by $C\sqrt{\|R_j\|_\infty\operatorname{tr}(|R_j|)}$, as can be seen by applying \cite{vH15}. This leads to different conditions by using the Gaussian concentration property. Another illustration of this phenomenon is given in the case of the empirical covariance operator in Section \ref{SecAppli} below.
\end{remark}

The condition $\delta_j'< 1/4$ can be dropped by introducing an additional remainder term.

\begin{corollary}\label{CorImprovedBound2}
Suppose that $g_j>0$. For every natural number $p\geq 1$, there is a constant $C>0$ depending only on $p$ such that
\begin{equation*}
\|\hat P_j-P_j\|_2\leq C\sum_{m=1}^{p-1}\|(R_jE)^mP_j\|_2+C\delta_j'^{p}.
\end{equation*}
\end{corollary}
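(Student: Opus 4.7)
The plan is to treat large and small values of $\delta_j'$ separately, reducing the small case to Corollary~\ref{CorImprovedBound} and controlling the resulting tail through a direct factorization.

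First, I would fix a threshold, say $1/8$. When $\delta_j' \geq 1/8$, the bound is immediate: both $\hat P_j$ and $P_j$ are rank-one orthogonal projections, so $\|\hat P_j - P_j\|_2 \leq 2 \leq 2 \cdot 8^p \, \delta_j'^p$, which is absorbed into the constant $C$. Hence the substantive case is $\delta_j' < 1/8$, where Corollary~\ref{CorImprovedBound} applies with $\epsilon = 1/8$ and yields
\[
\|\hat P_j - P_j\|_2 \leq C_1 \sum_{m=1}^\infty \|(R_jE)^m P_j\|_2,
\]
with $C_1$ depending only on $\epsilon$.

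Next I would establish the geometric bound $\|(R_jE)^m P_j\|_2 \leq \delta_j'^m$ for every $m \geq 1$. The key is the self-adjoint factorization $R_j = |R_j|^{1/2} S |R_j|^{1/2}$, where $S$ is the self-adjoint sign of $R_j$ (vanishing on $\ker R_j$, so $\|S\|_\infty \leq 1$ and $S$ commutes with $|R_j|^{1/2}$). Setting $A = |R_j|^{1/2} E |R_j|^{1/2}$ and $B = |R_j|^{1/2} E P_j$, a straightforward induction on $m$ gives
\[
(R_jE)^m P_j = |R_j|^{1/2} S (AS)^{m-1} B.
\]
Then $\||R_j|^{1/2}\|_\infty \leq g_j^{-1/2}$, $\|A\|_\infty \leq \delta_j'$, $\|B\|_2 \leq g_j^{1/2}\delta_j'$ (all direct from the definition of $\delta_j'$), together with submultiplicativity of the operator norm, yield $\|(R_jE)^m P_j\|_2 \leq g_j^{-1/2}\delta_j'^{m-1} g_j^{1/2}\delta_j' = \delta_j'^m$.

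With this at hand I split the series at $p$:
\[
\sum_{m=1}^\infty \|(R_jE)^m P_j\|_2 = \sum_{m=1}^{p-1} \|(R_jE)^m P_j\|_2 + \sum_{m=p}^\infty \|(R_jE)^m P_j\|_2,
\]
and bound the tail geometrically by $\sum_{m=p}^\infty \delta_j'^m \leq \delta_j'^p/(1-1/8) = (8/7)\,\delta_j'^p$. Substituting back concludes the small-$\delta_j'$ case, and together with the dichotomy this finishes the proof. The main obstacle is the factorization identity above: once one writes $R_j$ in terms of $|R_j|^{1/2}$ and its sign, the remaining estimates are routine applications of the definition of $\delta_j'$ and a geometric series.
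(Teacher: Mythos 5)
Your proof is correct and follows essentially the same route as the paper: the same dichotomy at $\delta_j'=1/8$, the trivial bound on $\|\hat P_j-P_j\|_2$ in the large-$\delta_j'$ case, and Corollary~\ref{CorImprovedBound} plus the term-by-term estimate $\|(R_jE)^mP_j\|_2\leq \delta_j'^m$ with a geometric tail in the small-$\delta_j'$ case. The only cosmetic difference is that you rederive that estimate via the sign factorization $R_j=|R_j|^{1/2}S|R_j|^{1/2}$ instead of citing Lemma~\ref{LemCoeffBound}, which is precisely the weighting argument used in that lemma's proof.
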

With a slightly more careful analysis, one can replace the sum of the norms by the norm of the sum.
\begin{corollary}\label{CorImprovedBound3}
Suppose that $g_j>0$. For every natural number $p\geq 1$, there is a constant $C>0$ depending only on $p$ such that
\begin{equation*}
\|\hat P_j-P_j\|_2\leq C\|\sum_{m=1}^{p-1}(R_jE)^mP_j\|_2+C\delta_j'^{p}.
\end{equation*}
\end{corollary}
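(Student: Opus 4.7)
The aim is to refine Corollary~\ref{CorImprovedBound2} by keeping the leading off-diagonal contributions $(R_jE)^mP_j$ under a common norm rather than applying the triangle inequality term by term. First I reduce to the small-$\delta_j'$ regime: if $\delta_j' \ge c_0$ for some absolute constant $c_0 \in (0,1/4)$, then $\|\hat P_j - P_j\|_2 \le \sqrt{2} \le C\,\delta_j'^p$ and the bound is trivial. Assume $\delta_j' \le c_0$, so that Corollary~\ref{CorPertSeries} applies and Theorem~\ref{ThmTaylorSP} yields
\[
\Bigl\|\hat P_j - P_j - \sum_{n=1}^{p-1}P_j^{(n)}\Bigr\|_2 \le C\,\delta_j'^p,
\]
after absorbing the factor $g_j^{-1/2}\|P_jE|R_j|^{1/2}\|_2 \le \delta_j'$ into the exponent. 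It thus suffices to represent $\sum_{n=1}^{p-1}P_j^{(n)}$ as $\sigma(A_p + A_p^*) + W$, where $A_p := \sum_{m=1}^{p-1}(R_jE)^mP_j$, $\sigma \in \{\pm 1\}$, and $\|W\|_2 \le C\,\delta_j'^p$.

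To achieve this decomposition I would split each $P_j^{(n)}$ into its four blocks relative to $I = P_j + Q_j$. The off-diagonal block $Q_jP_j^{(n)}P_j$ is a signed sum of words of the form $R_j^{a_1}ER_j^{a_2}E\cdots ER_j^{a_k}EP_j$ with $a_i \ge 1$, each multiplied by a scalar factor produced by the interior $P_j$'s via the identities $P_jEP_j = \operatorname{tr}(P_jEP_j)P_j$ and $P_jER_j^bEP_j = \operatorname{tr}(P_jER_j^bEP_j)P_j$. The unique word with no interior $P_j$ contributes $(-1)^n(R_jE)^nP_j$, whereas every other word carries at least one scalar factor controlled by $|\lambda_j^{(1)}| \le g_j\delta_j'$ or $|\operatorname{tr}(P_jER_j^bEP_j)| \le g_j\delta_j'^2$. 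Combining these scalars with the Hilbert--Schmidt estimates $\|R_j^{a_1}E\cdots R_j^{a_k}EP_j\|_2 \le C\,g_j^{k-\sum a_i}\delta_j'^k$ that follow from the factorisation $R_j = U|R_j|^{1/2}\cdot |R_j|^{1/2}$, and grouping contributions across $n = 1,\dots,p-1$ so that the scalar factors line up with the truncated eigenvalue series $\hat\lambda_j - \lambda_j - \sum_{r=1}^{p-1}\lambda_j^{(r)} = O(\delta_j'^p)$ from Theorem~\ref{ThmTaylorEV}, one bounds the total interior-$P_j$ remainder by $C\,\delta_j'^p$. The diagonal blocks $P_jP_j^{(n)}P_j$ and $Q_jP_j^{(n)}Q_j$ are quadratic in the off-diagonal ones (because $\hat P_j$ is a rank-one projection, so $P_j\hat P_jP_j - P_j = -\|Q_j\hat u_j\|^2 P_j$ and $Q_j\hat P_jQ_j = Q_j\hat u_j \otimes Q_j\hat u_j$) and are absorbed into $W$.

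The desired bound follows once one observes that $A_p$ has source in $P_j\mathcal{H}$ and range in $Q_j\mathcal{H}$, so $A_p^2 = 0$ and $\operatorname{tr}(A_p^2) = 0$, giving $\|A_p + A_p^*\|_2^2 = 2\|A_p\|_2^2$. Hence
\[
\|\hat P_j - P_j\|_2 \le \|A_p + A_p^*\|_2 + \|W\|_2 \le \sqrt{2}\,\Bigl\|\sum_{m=1}^{p-1}(R_jE)^mP_j\Bigr\|_2 + C\,\delta_j'^p.
\]
The main obstacle is the combinatorial bookkeeping in the second paragraph: a naive triangle-inequality bound on the interior-$P_j$ corrections yields only a remainder of order $\delta_j'^2$. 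Obtaining the sharper $O(\delta_j'^p)$ bound requires exploiting cross-order cancellations between the interior-$P_j$ terms of $P_j^{(n)}$ for different $n$, which mirrors the Taylor cancellation of the eigenvalue series; a convenient implementation is the Löwdin-type shift $E \to E - P_jEP_j$, which eliminates most interior-$P_j$ terms outright and then recouples the remainder through the estimate $|\hat\lambda_j - \lambda_j| \le C\,g_j\,\delta_j'$.
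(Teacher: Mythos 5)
Your opening reduction (splitting off $\delta_j'\ge c_0$, then invoking Theorem \ref{ThmTaylorSP} to replace $\hat P_j-P_j$ by $\sum_{n=1}^{p-1}P_j^{(n)}$ up to $C\delta_j'^{\,p}$) is fine, but the central claim of the second paragraph — that $\sum_{n=1}^{p-1}P_j^{(n)}=\sigma(A_p+A_p^*)+W$ with $\|W\|_2\le C\delta_j'^{\,p}$ — is false for $p\ge 3$, and no amount of ``cross-order cancellation'' or L\"owdin shifting can rescue it, because the quantity you are trying to bound is genuinely not that small. Already at $n=2$ the off-diagonal block is $Q_jP_j^{(2)}P_j=(R_jE)^2P_j-\lambda_j^{(1)}R_j^2EP_j$, and the interior-$P_j$ word $\lambda_j^{(1)}R_j^2EP_j$ is generically of size $\delta_j'^{\,2}$, not $\delta_j'^{\,3}$: take $E=\epsilon\,u_j\otimes u_j+\epsilon(u_k\otimes u_j+u_j\otimes u_k)$ with $\lambda_k-\lambda_j=-g_j$, so that $\delta_j'=\epsilon/g_j$, $(R_jE)^2P_j=0$, and $\lambda_j^{(1)}R_j^2EP_j=\delta_j'^{\,2}\,u_k\otimes u_j$. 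Since the Hilbert--Schmidt norm decomposes over the blocks $P_j\cdot P_j$, $Q_j\cdot P_j$, etc., the diagonal blocks cannot cancel this, and within the truncated sum there is no other term of this order in the $Q_j\cdot P_j$ block to cancel it either. There is also a sign inconsistency in your decomposition: you correctly identify the clean word at order $n$ as $(-1)^n(R_jE)^nP_j$, but the alternating sum $\sum_{n=1}^{p-1}(-1)^n(R_jE)^nP_j$ is not $\sigma\sum_{m=1}^{p-1}(R_jE)^mP_j$ for any single sign $\sigma$ once $p\ge 3$. A repaired version of your route would instead bound each interior-$P_j$ word by $C\delta_j'\,\|(R_jE)^mP_j\|_2$ for some $m\ge1$ and then use $\|R_jEP_j\|_2\le\|A_p\|_2+C\delta_j'^{\,2}$ together with $\|(R_jE)^mP_j\|_2\le\delta_j'^{\,m}$ for $m\ge2$, yielding $\|W\|_2\le C\delta_j'\|A_p\|_2+C\delta_j'^{\,p}$ — a weaker but sufficient estimate, and a different claim from the one you propose to prove.

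The paper avoids the Taylor coefficients entirely. From \eqref{EqBasicBB} one gets the exact identity $(I-P_j)\hat P_j=R_jEP_j\hat P_j+R_jE(I-P_j)\hat P_j+(\lambda_j-\hat\lambda_j)R_j(I-P_j)\hat P_j$, which is iterated $(p-1)$ times to produce $\sum_{m=1}^{p-1}(R_jE)^mP_j\hat P_j$ (whose Hilbert--Schmidt norm is at most $\|\sum_m(R_jE)^mP_j\|_2$), a tail $(R_jE)^{p-1}(I-P_j)\hat P_j$ bounded by $\delta_j'^{\,p}/(1-2\delta_j)$ via the weighting $I-P_j=|R_j|^{1/2}|R_j|^{-1/2}$ and Lemma \ref{LemER}, and a term prefixed by $(\lambda_j-\hat\lambda_j)\sum_m(R_jE)^{m-1}R_j$ whose operator norm is below $1$ by Lemmas \ref{LemEVSeparation} and \ref{LemCoeffBound}, so it is absorbed into the left-hand side. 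This is both shorter and avoids the combinatorics you flag as the main obstacle; I would encourage you to restructure your argument along these lines, or else to prove the weaker bound on $W$ indicated above.
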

\subsection{Extension to multiple eigenvalues}\label{SecMultEV}
Finally, we present an extension of Theorem \ref{ThmTaylorSP} to the case of multiple eigenvalues. Let $\mu_1>\mu_2>\dots>0$ be the sequence of positive and distinct eigenvalues of $\Sigma$. For $r\geq 1$, let $\mathcal{I}_r=\{j\geq 1:\lambda_j=\mu_r\}$. For $r\geq 1$, let $g_r=\min(\mu_{r-1}-\mu_r,\mu_r-\mu_{r+1})$, and let
\begin{equation}\label{SpectralProj}
P_r=\sum_{j\in \mathcal{I}_r}u_j\otimes u_j \quad\text{and}\quad \hat{P}_r=\sum_{j\in \mathcal{I}_r}\hat{u}_j\otimes \hat{u}_j.
\end{equation}
For $r\geq 1$, define the reduced resolvent
\[
R_r=\sum_{s\neq r}\frac{1}{\mu_s-\mu_r}P_s,
\]
and $R_r^{(k)}=-P_r$ if $k=0$ and $R_r^{(k)}=R_r^k$ if $k>0$. Then we have the following extension of Theorem \ref{ThmTaylorEV}.
\begin{thm}\label{ThmTaylorMultiple} Suppose that
\[
\delta_r:=\Vert(|R_r|^{1/2}+g_r^{-1/2}P_r)E(|R_r|^{1/2}+g_r^{-1/2}P_r)\Vert_\infty<1/4.
\]
Then there is an absolute constant $C>1$ such that for every $p\geq 1$,
\begin{align*}
&\|\hat P_r-\sum_{n=0}^{p-1}\sum_{\substack{k_1,\dots,k_{n+1}\geq 0\\k_1+\dots+k_{n+1}= n}}(-1)^{k_1+\cdots +k_{n+1}}R_r^{(k_1)}E\cdots ER_r^{(k_{n+1})}\|_2\leq C(4\delta_r)^p.
\end{align*}
\end{thm}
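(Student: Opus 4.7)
I would proceed via the holomorphic functional calculus, combined with the symmetric weighted Neumann expansion that underlies Theorem~\ref{ThmTaylorSP}. First, I would fix the contour $\gamma=\{z\in\mathbb{C}:|z-\mu_r|=g_r/2\}$ and establish an eigenvalue-separation statement: under $\delta_r<1/4$, the perturbed eigenvalues $\hat\lambda_j$ with $j\in\mathcal{I}_r$ all lie inside $\gamma$, while the remainder of the spectrum of $\hat\Sigma$ lies outside. This is the multiple-eigenvalue analogue of the separation used in proving Theorem~\ref{ThmTaylorSP}, and under the weighted hypothesis it should follow from a Weyl-type inequality applied to $|S(z)|^{1/2}E|S(z)|^{1/2}$ on and inside $\gamma$. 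Consequently,
\[
\hat P_r=-\frac{1}{2\pi i}\oint_\gamma (zI-\hat\Sigma)^{-1}\,dz.
\]

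Next, I would introduce the weighted resolvent factorization. The resolvent $S(z):=(zI-\Sigma)^{-1}=\sum_s(z-\mu_s)^{-1}P_s$ is normal, hence $S(z)=|S(z)|^{1/2}V(z)|S(z)|^{1/2}$ for a unitary $V(z)$. Setting $A(z):=|S(z)|^{1/2}E|S(z)|^{1/2}$ and using that on $\gamma$ one has $|z-\mu_r|^{-1}=2/g_r$ and $|z-\mu_s|^{-1}\leq 2/|\mu_s-\mu_r|$ for $s\neq r$, I obtain the operator inequality $|S(z)|^{1/2}\leq\sqrt{2}\,(|R_r|^{1/2}+g_r^{-1/2}P_r)$, which together with the definition of $\delta_r$ gives $\|A(z)\|_\infty\leq 2\delta_r<1/2$. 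Iterating the resolvent identity $p$ times and rearranging in symmetrically weighted form yields
\[
(zI-\hat\Sigma)^{-1}=\sum_{n=0}^{p-1}S(z)(ES(z))^n+|S(z)|^{1/2}V(z)(A(z)V(z))^p[I-A(z)V(z)]^{-1}|S(z)|^{1/2}.
\]

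A standard residue computation at $z=\mu_r$ then identifies the contour integral of each term $S(z)(ES(z))^n$ with the corresponding multi-index sum in the theorem. Writing $S(z)=(z-\mu_r)^{-1}P_r+R_r(z)$ with $R_r(z)=\sum_{s\neq r}(z-\mu_s)^{-1}P_s$ in each of the $n+1$ resolvent factors and collecting residues, the multi-index $(k_1,\dots,k_{n+1})$ records how many $R_r(z)$-factors each slot contributes; the $k_i=0$ slot produces the $-P_r$ contribution encoded in the convention $R_r^{(0)}=-P_r$, and the resulting signs match those in the statement.

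The last step is to bound the remainder integral. On $\gamma$ the central factor satisfies $\|V(z)(A(z)V(z))^p[I-A(z)V(z)]^{-1}\|_\infty\leq 2(2\delta_r)^p$ via the geometric series bound under $\delta_r<1/4$, while $\||S(z)|^{1/2}\|_\infty^2\leq 2/g_r$; together with the contour length $|\gamma|=\pi g_r$ this immediately yields the desired bound in operator norm. The main obstacle is upgrading this to the Hilbert--Schmidt norm with a constant independent of the multiplicity $m_r=|\mathcal{I}_r|$, since the outer factors $|S(z)|^{1/2}$ need not be Hilbert--Schmidt in infinite dimension, so the HS norm cannot be split off directly. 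The plan is to absorb one of the outer weights into the integrand through an additional residue step---analogously to how the factor $\|P_jE|R_j|^{1/2}\|_2$ appears in Theorem~\ref{ThmTaylorSP}---and then estimate the resulting expression using H\"older-type inequalities such as $\|XY\|_1\leq\|X\|_2\|Y\|_2$ for the trace pairing. Carrying out this bookkeeping cleanly, so that the final constant is indeed absolute, is the only non-routine part of the argument.
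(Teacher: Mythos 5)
Your contour-integral route is genuinely different from the one the paper has in mind: the author states that Theorem~\ref{ThmTaylorMultiple} is to be proved by the explicit-remainder expansion behind Theorem~\ref{ThmTaylorSP} (Lemmas~\ref{LemExplicitRem}--\ref{LemER}), whereas you follow the holomorphic functional calculus of the Appendix. Up to the Hilbert--Schmidt issue, your argument is sound and is essentially the multiple-eigenvalue version of Corollary~\ref{CorEPTaylorExpHFC}: the spectral separation across $\gamma$ is the analogue of Lemma~\ref{LemEVSeparation}, the commuting weights do give $\|A(z)\|_\infty\le 2\delta_r$ on $\gamma$, and the residue identification of the coefficients is classical. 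This cleanly yields the operator-norm bound $\|\hat P_r-\sum_{n<p}P_r^{(n)}\|_\infty\le 2(2\delta_r)^p/(1-2\delta_r)$.

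The genuine gap is exactly the step you defer to ``bookkeeping'': upgrading to the Hilbert--Schmidt norm while keeping an absolute constant. This cannot be done, because the claimed estimate is then false once $m_r=\operatorname{rank}P_r$ is large. Take $\mathcal{H}=\mathbb{R}^{2m}$, $\Sigma=\operatorname{diag}(2I_m,I_m)$, $r=1$, and $E=\epsilon\sum_{i=1}^m(e_i\otimes e_{m+i}+e_{m+i}\otimes e_i)$. Then $|R_1|^{1/2}+g_1^{-1/2}P_1=I$, so $\delta_1=\|E\|_\infty=\epsilon$, while a two-by-two block computation gives $\|\hat P_1-P_1\|_2=\sqrt{2m}\,\sin\theta$ with $\tan 2\theta=2\epsilon$, i.e.\ $\|\hat P_1-P_1\|_2\sim\sqrt{2m}\,\delta_1$; so already for $p=1$ no absolute constant suffices. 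Any correct Hilbert--Schmidt bound must retain a prefactor such as $g_r^{-1/2}\||R_r|^{1/2}EP_r\|_2$ --- precisely the factor appearing in Theorem~\ref{ThmTaylorSP} --- which is at most $\sqrt{m_r}\,\delta_r$ but not at most $C\delta_r$. You should therefore either prove the operator-norm version, or restate the bound with this prefactor and then carry out your absorption step (splitting one weight off through an extra residue and using $\|XY\|_2\le\|X\|_2\|Y\|_\infty$); as written, the plan of ending with an absolute constant is doomed rather than merely tedious. A minor separate point: the $n=0$ term of the stated approximant is $(-1)^0R_r^{(0)}=-P_r$, so the sign convention in the statement (presumably a typo for $(-1)^{n+1}$) must also be fixed before even the $p=1$ case can hold; your residue computation produces the correctly signed coefficients, so you should flag the discrepancy rather than reproduce the stated signs.
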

The proof of Theorem \ref{ThmTaylorMultiple} follows a similar but more tedious line of arguments as the proof of Theorem \ref{ThmTaylorSP}, and is therefore not presented in this paper.

\section{Proof of the  main results}\label{SecProof}
In this section, we prove Theorems \ref{ThmTaylorSP} and \ref{ThmTaylorEV}. The proof is based on the analysis of a Taylor expansion with explicit remainder term. Additionally, we present the proofs for the consequences from Sections \ref{SecPertSeries} and \ref{SecTightPB}.
\subsection{Preliminary lemmas}\label{SecPreLemmas}
The following simple lemma gives the number of terms in the formula for $P_j^{(n)}$.
\begin{lemma}\label{LemNumberCoeff}
The number of $(n+1)$-tuples $(k_1,\dots,k_{n+1})\in\mathbb{N}_{0}^{n+1}$ such that $k_1+\dots+k_{n+1}=m$ is equal to $\binom{n+m}{n}\leq 2^{n+m}$.
\end{lemma}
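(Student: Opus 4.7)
The statement is the classical stars-and-bars identity together with a trivial binomial-theorem bound, so my plan is essentially to recall these two ingredients.

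For the equality, I would set up the standard bijection between the set of $(n+1)$-tuples $(k_1,\dots,k_{n+1})\in\mathbb{N}_0^{n+1}$ with $k_1+\dots+k_{n+1}=m$ and the set of binary strings of length $n+m$ containing exactly $n$ \textquotedblleft bars\textquotedblright\ and $m$ \textquotedblleft stars\textquotedblright. Given a tuple, place $k_1$ stars, then a bar, then $k_2$ stars, then a bar, and so on, ending with $k_{n+1}$ stars; the resulting string has $m$ stars and $n$ bars, and the construction is clearly invertible. Hence the number of such tuples equals the number of ways to choose the $n$ bar-positions out of $n+m$ total positions, namely $\binom{n+m}{n}$.

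For the inequality, I would invoke the binomial theorem: since every summand in
\[
2^{n+m}=(1+1)^{n+m}=\sum_{k=0}^{n+m}\binom{n+m}{k}
\]
is non-negative, the single term $\binom{n+m}{n}$ is bounded above by the full sum, giving $\binom{n+m}{n}\leq 2^{n+m}$.

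There is no real obstacle here; the only choice is stylistic (stars-and-bars bijection versus an induction on $n$ via the Pascal-type recursion $\binom{n+m}{n}=\binom{n-1+m}{n-1}+\binom{n+m-1}{n}$ obtained by conditioning on whether $k_{n+1}=0$ or not). I would prefer the bijective presentation because it is a single sentence and matches the combinatorial flavor of the coefficient count that motivated the lemma.
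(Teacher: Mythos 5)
Your proof is correct: the stars-and-bars bijection gives the count $\binom{n+m}{n}$ exactly, and bounding a single binomial coefficient by the full sum $2^{n+m}$ is valid. The paper states this lemma without proof as a classical fact, so your argument is simply the standard one made explicit, and there is nothing further to compare.
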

Our first crucial step is to show that the condition $\delta_j<1/2$ implies that the perturbed eigenvalues $\hat \lambda_{j-1},\hat \lambda_j,\hat \lambda_{j+1}$ are well separated.
\begin{lemma}\label{LemEVSeparation} If $\delta_j<1/2$ then we have
\begin{equation}\label{EqEVSep}
|\hat \lambda_j-\lambda_j|\leq \delta_jg_j
\end{equation}
as well as
\begin{equation}\label{EqEVSep2}
\hat \lambda_{j+1}-\lambda_{j+1}\leq \delta_j(\lambda_j-\lambda_{j+1}),\quad \hat \lambda_{j-1}-\lambda_{j-1}\geq -\delta_j(\lambda_{j-1}-\lambda_{j}).
\end{equation}
\end{lemma}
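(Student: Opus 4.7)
The plan is to reduce both inequalities to the Courant--Fischer min--max characterization of the eigenvalues of $\hat\Sigma$, after first converting the hypothesis $\delta_j<1/2$ into a pointwise quadratic form bound on $E$.

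Concretely, introduce the self--adjoint, positive operator
\[
W = |R_j|^{1/2}+g_j^{-1/2}P_j,
\]
so that (on the eigenbasis of $\Sigma$) $W$ acts as multiplication by $|\lambda_k-\lambda_j|^{-1/2}$ on $P_k$ for $k\neq j$ and by $g_j^{-1/2}$ on $P_j$. Hence $W$ is invertible with
\[
W^{-2}=\sum_{k\neq j}|\lambda_k-\lambda_j|P_k+g_jP_j.
\]
By definition $\|WEW\|_\infty=\delta_j$, so substituting $x=Wy$ gives the key inequality
\[
|\langle Ex,x\rangle|\le \delta_j\,\langle W^{-2}x,x\rangle\qquad(x\in\mathcal H).
\]
The point is that $W^{-2}$ is easy to evaluate on the natural test subspaces spanned by the $u_k$'s, because on those subspaces it reduces to an affine combination of $\langle \Sigma x,x\rangle$, $\lambda_j\|x\|^2$ and boundary terms proportional to $g_j|\langle u_j,x\rangle|^2$.

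For the bound \eqref{EqEVSep} on $\hat\lambda_j$, I would use Courant--Fischer with the test space $V=\mathrm{span}\{u_j,u_{j+1},\dots\}$ (dimension $d-j+1$) for the upper bound, and $V=\mathrm{span}\{u_1,\dots,u_j\}$ for the lower bound. On the first of these,
\[
\langle W^{-2}x,x\rangle=\lambda_j\|x\|^2-\langle\Sigma x,x\rangle+g_j|\langle u_j,x\rangle|^2,
\]
and combining with $\langle\hat\Sigma x,x\rangle=\langle\Sigma x,x\rangle+\langle Ex,x\rangle$ and the fact that $1-\delta_j\ge 0$ yields $\langle\hat\Sigma x,x\rangle\le(1-\delta_j)\langle\Sigma x,x\rangle+\delta_j\lambda_j\|x\|^2+\delta_jg_j|\langle u_j,x\rangle|^2\le\lambda_j+\delta_jg_j$ on the unit sphere of $V$, giving $\hat\lambda_j\le\lambda_j+\delta_jg_j$. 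The lower bound is symmetric, using that $\langle\Sigma x,x\rangle\ge\lambda_j$ on the second test space, and that $|\langle u_j,x\rangle|^2\le 1$.

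For \eqref{EqEVSep2}, I would run the same min--max argument, but now with the test space $V=\mathrm{span}\{u_{j+1},u_{j+2},\dots\}$ for $\hat\lambda_{j+1}$ and $V=\mathrm{span}\{u_1,\dots,u_{j-1}\}$ for $\hat\lambda_{j-1}$. On these spaces the boundary term $g_j|\langle u_j,x\rangle|^2$ disappears, and $\langle W^{-2}x,x\rangle$ collapses to $\lambda_j\|x\|^2-\langle\Sigma x,x\rangle$ (resp. $\langle\Sigma x,x\rangle-\lambda_j\|x\|^2$). Then $\langle\hat\Sigma x,x\rangle\le(1-\delta_j)\langle\Sigma x,x\rangle+\delta_j\lambda_j\|x\|^2\le\lambda_{j+1}+\delta_j(\lambda_j-\lambda_{j+1})$ gives the first inequality of \eqref{EqEVSep2}, and the dual computation gives the second.

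The only subtle point is ensuring that the Courant--Fischer coefficient $1-\delta_j$ in front of $\langle\Sigma x,x\rangle$ is nonnegative so that we may monotonically replace $\langle\Sigma x,x\rangle$ by its extreme value on $V$; this is exactly why the hypothesis $\delta_j<1/2$ (in fact only $\delta_j\le 1$) is used. No spectral calculus or resolvent expansion is needed — everything follows from the single inequality $|\langle Ex,x\rangle|\le\delta_j\langle W^{-2}x,x\rangle$ and two careful choices of test subspace.
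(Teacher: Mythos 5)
Your proof is correct and follows essentially the same route as the paper: the paper reduces the lemma to Proposition 1 of \cite{JW18b}, whose content is precisely the weighted quadratic-form/Courant--Fischer test-subspace mechanism you carry out explicitly with $W=|R_j|^{1/2}+g_j^{-1/2}P_j$ and the identity $|\langle Ex,x\rangle|\le\delta_j\langle W^{-2}x,x\rangle$. Your version is self-contained (and correctly observes that only $\delta_j\le 1$ is needed for this lemma), whereas the paper instead verifies the hypothesis $\|T_{\ge j}ET_{\ge j}\|_\infty\le 1$ for the weights $(\lambda_j+\delta_j g_j-\lambda_k)^{-1/2}$ and invokes the cited result as a black box.
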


\begin{proof}
Set
\begin{align*}
T_{\geq j}=\sum_{k\geq  j}\frac{1}{\sqrt{\lambda_j+\delta_jg_j-\lambda_k}}P_k,\quad T_{\leq j}=\sum_{k\leq  j}\frac{1}{\sqrt{\lambda_k+\delta_jg_j-\lambda_j}}P_k.
\end{align*}
Then \cite[Proposition 1]{JW18b} states that $\hat\lambda_j-\lambda_j\leq \delta_jg_j$ (resp. $\hat\lambda_j-\lambda_j\geq -\delta_jg_j$), provided that $\|T_{\geq j}ET_{\geq j}\|_\infty\leq 1$ (resp. $\|T_{\leq j}ET_{\leq j}\|_\infty\leq 1$). Now, by simple properties of the operator norm, using that $\sqrt{\lambda_j+\delta_jg_j-\lambda_k}\geq \sqrt{\lambda_j-\lambda_k}$ for every $k>j$, we have
\begin{align*}
\|T_{\geq j}ET_{\geq j}\|_\infty
&\leq \Vert(|R_j|^{1/2}+(\delta_jg_j)^{-1/2}P_j)E(|R_j|^{1/2}+(\delta_jg_j)^{-1/2}P_j)\Vert_\infty\\
&\leq \delta_j^{-1}\Vert(|R_j|^{1/2}+g_j^{-1/2}P_j)E(|R_j|^{1/2}+g_j^{-1/2}P_j)\Vert_\infty\leq 1.
\end{align*}
By the above, we conclude that $\hat\lambda_j-\lambda_j\leq \delta_jg_j$. Similarly, we have $\|T_{\leq j}ET_{\leq j}\|_\infty\leq 1$, implying that $\hat\lambda_j-\lambda_j\geq  -\delta_jg_j$, and \eqref{EqEVSep} follows. Moreover, for
\begin{align*}
T_{>j}&=\sum_{k\geq j+1}\frac{1}{\sqrt{\lambda_{j+1}+\delta_j(\lambda_j-\lambda_{j+1})-\lambda_k}}P_k,\\
T_{< j}&=\sum_{k\leq j-1}\frac{1}{\sqrt{\lambda_k+\delta_j(\lambda_{j-1}-\lambda_j)-\lambda_{j-1}}}P_k,
\end{align*}
we have
\[
\|T_{< j}ET_{< j}\|_\infty,\|T_{>j}ET_{>j}\|_\infty\leq \delta_j^{-1}\Vert|R_j|^{1/2}E|R_j|^{1/2}\Vert_\infty\leq 1,
\]
and another application of \cite[Proposition 1]{JW18b} yields  \eqref{EqEVSep2}.
\end{proof}

We now state an explicit formula for the remainder term when approximating $\hat P_j$ with a $(p-1)$-th Taylor polynomial in $E$.
\begin{lemma}\label{LemExplicitRem} Suppose that $\delta_j<1/2$. Then, for every $p\geq 1$, we have
\begin{equation}\label{EqExplicitRem}
\hat P_j-\sum_{n=0}^{p-1}P_j^{(n)}=(-1)^{p-1}\sum_{k_1,\dots,k_p\geq 0}R_j^{(k_1)}E\cdots ER_j^{(k_p)}E\hat R_j^{(p-k_1-\dots-k_p)}
\end{equation}
with $\hat R_j^{(k)}=-(\hat\lambda_j-\lambda_j)^{-k}\hat P_j$ if $k\leq 0$ and $R_j^{(k)}=\hat R_j^k$ if $k>0$, where
\[
\hat R_j=\sum_{k\neq j}\frac{1}{\hat\lambda_k-\lambda_j}\hat P_k.
\]
\end{lemma}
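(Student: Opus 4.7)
My plan is to derive the identity from the Dunford--Taylor contour integral representation of $\hat P_j$, the second resolvent identity iterated $p$ times, and a Laurent expansion around $z=\lambda_j$.

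First, I would use Lemma~\ref{LemEVSeparation} to fix an admissible contour. The bound $|\hat\lambda_j-\lambda_j|\le\delta_j g_j<g_j/2$ places $\hat\lambda_j$ inside the disk $\{|z-\lambda_j|<g_j/2\}$, whereas \eqref{EqEVSep2}, extended to all $|k-j|\ge 2$ by monotonicity of the ordered spectrum, gives $|\hat\lambda_k-\lambda_j|\ge(1-\delta_j)g_j>g_j/2$ for $k\ne j$. Hence the circle $\Gamma=\{z:|z-\lambda_j|=g_j/2\}$ encloses $\lambda_j$ and $\hat\lambda_j$ but no other eigenvalue, and the holomorphic functional calculus yields $\hat P_j=-\frac{1}{2\pi i}\oint_\Gamma(\hat\Sigma-zI)^{-1}\,dz$.

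Second, I would iterate the second resolvent identity $(\hat\Sigma-zI)^{-1}-(\Sigma-zI)^{-1}=-(\Sigma-zI)^{-1}E(\hat\Sigma-zI)^{-1}$ to obtain the purely algebraic decomposition
\[
(\hat\Sigma-zI)^{-1}=\sum_{n=0}^{p-1}(-1)^n\bigl((\Sigma-zI)^{-1}E\bigr)^n(\Sigma-zI)^{-1}+(-1)^p\bigl((\Sigma-zI)^{-1}E\bigr)^p(\hat\Sigma-zI)^{-1},
\]
and then substitute the Laurent expansions $(\Sigma-zI)^{-1}=\sum_{k\ge 0}R_j^{(k)}(z-\lambda_j)^{k-1}$ and $(\hat\Sigma-zI)^{-1}=\sum_{k\in\mathbb{Z}}\hat R_j^{(k)}(z-\lambda_j)^{k-1}$. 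The negative-index coefficients of the second expansion come from $\frac{\hat P_j}{\hat\lambda_j-z}=-\sum_{m\ge 0}(\hat\lambda_j-\lambda_j)^m(z-\lambda_j)^{-m-1}\hat P_j$, which produces exactly $\hat R_j^{(k)}=-(\hat\lambda_j-\lambda_j)^{-k}\hat P_j$ for $k\le 0$, matching the statement.

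Third, after expanding the $(p+1)$-fold product in the remainder, the contour integral $\frac{1}{2\pi i}\oint_\Gamma\cdot\,dz$ picks out, by the residue theorem, precisely those monomials $R_j^{(k_1)}E\cdots ER_j^{(k_p)}E\hat R_j^{(k_{p+1})}(z-\lambda_j)^{k_1+\cdots+k_{p+1}-(p+1)}$ whose total exponent equals $-1$, forcing $k_{p+1}=p-k_1-\cdots-k_p$. Combined with the overall sign $-(-1)^p=(-1)^{p-1}$ inherited from the prefactor $-\frac{1}{2\pi i}$, this yields the claimed formula. The same bookkeeping on the partial-sum piece reproduces $\sum_{n=0}^{p-1}P_j^{(n)}$, since for each $n$ the constraint $k_1+\cdots+k_{n+1}=n$ and the sign $(-1)^{n+1}$ match the defining formula of $P_j^{(n)}$.

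The main obstacle is rigorously interchanging the infinite multiple sums with the contour integral. Since $\Gamma$ is compact, each of the three Laurent series admits a geometric majorant on $\Gamma$: the ratios are $\|R_j\|_\infty g_j/2=1/2$, $\|\hat R_j\|_\infty g_j/2\le 1/(2(1-\delta_j))<1$ for the positive-index part of the perturbed resolvent, and $|\hat\lambda_j-\lambda_j|/(g_j/2)\le 2\delta_j<1$ for its negative-index part. Each series therefore converges uniformly in operator norm on $\Gamma$; Fubini then applies, and the final sum over $(k_1,\dots,k_p)\in\mathbb{N}_0^p$ converges absolutely in operator norm with geometric decay, making the identity rigorous.
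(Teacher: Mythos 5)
Your argument is correct, but it takes a genuinely different route from the paper. The paper proves \eqref{EqExplicitRem} by a purely algebraic induction on $p$: starting from the eigenvector identities \eqref{EqBasicBB}, it derives the recursions \eqref{EqExpId1}--\eqref{EqExpId3} expressing each $\hat R_j^{(k)}$ in terms of the $R_j^{(l)}$ and $\hat R_j^{(k+1-l)}$, and then substitutes these into the induction hypothesis. You instead use the Dunford--Taylor integral for $\hat P_j$, the $p$-fold iterated second resolvent identity, and Laurent expansions of both resolvents about $\lambda_j$, extracting the residue. Both routes rest on the same geometric input, Lemma~\ref{LemEVSeparation} (in your case to certify that the circle of radius $g_j/2$ separates $\hat\lambda_j$ from the rest of the perturbed spectrum; in the paper's case to make $\hat R_j$ and the geometric series in \eqref{EqExpStep2} well defined), and your convergence bookkeeping on the contour is sound: the three ratios $1/2$, $1/(2(1-\delta_j))$ and $2\delta_j$ are all below $1$ precisely because $\delta_j<1/2$, so uniform convergence on the compact contour justifies the interchange of sum and integral, and the identification of $\hat R_j^{(k)}$ for $k\le 0$ with the principal part of $(\hat\lambda_j-z)^{-1}\hat P_j$ matches the definition in the statement. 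Two remarks on what each approach buys. The paper's algebraic proof stays entirely within real Hilbert-space manipulations, which is what makes the subsequent symmetric weighting (inserting $I-P_j=|R_j|^{1/2}|R_j|^{-1/2}$ term by term in Lemma~\ref{LemCoeffBound} and the proofs of the main theorems) natural, and it avoids any complexification; your contour route is shorter once the functional-calculus machinery is granted, and is essentially the method the paper itself deploys in Appendix~\ref{SecHFC}, where it is carried out only for $\mathcal H=\mathbb{R}^d$. If you want your version to cover the infinite-dimensional case directly you should add one sentence noting that the contour $\{|z-\lambda_j|=g_j/2\}$ stays in the half-plane $\{\operatorname{Re}z\ge \lambda_j-g_j/2>0\}$ and hence avoids the accumulation point $0$ of the spectrum of the compact operators $\Sigma$ and $\hat\Sigma$; alternatively, invoke the finite-dimensional approximation the paper allows itself in the notation section. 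Neither point is a gap in substance.
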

\begin{remark}
By Lemma \ref{LemEVSeparation}, $\hat R_j$ is well-defined.  Moreover, the right-hand side in \eqref{EqExplicitRem} converges by Lemmas \ref{LemNumberCoeff} and \ref{LemEVSeparation}.
\end{remark}

%\begin{remark}
%In principle, this formula can also be deduced from the integral form of the remainder based on the resolvent (the validity of this representation is ensured by Lemma \ref{LemEVSeparation}). Yet, a straightforward application of the residual theorem using a Laurent series for the resolvent seems to require  $\delta_j<1/4$. Here, we present a more direct proof which introduces arguments used in the proof of Theorems \ref{ThmTaylorSP} and \ref{ThmTaylorEV}.
%\end{remark}
\begin{proof}
We would like to establish \eqref{EqExplicitRem} by induction on $p$. For every $k\geq 1$, we have
\begin{equation}\label{EqBasicBB}
(\hat \lambda_j-\lambda_k)P_k\hat P_j=P_kE\hat P_j,\quad (\hat \lambda_k-\lambda_j)P_j\hat P_k=P_jE\hat P_k.
\end{equation}
Summing these identities over $k\neq j$ and using Lemma \ref{LemEVSeparation} yields
 \begin{align}
 P_j(I-\hat P_j)&=P_jE\hat R_j\label{EqExpStep1},\\
(I-P_j)\hat P_j&=\sum_{k\neq j}\frac{1}{\hat\lambda_j-\lambda_k}P_kE\hat P_j=-\sum_{l=1}^\infty (\hat\lambda_j-\lambda_j)^{l-1}R_j^{l}E\hat P_j.\label{EqExpStep2}
\end{align}
Hence,
\begin{align*}
\hat P_j-P_j&=(I-P_j)\hat P_j-P_j(I-\hat P_j)=-\sum_{l=1}^\infty (\hat\lambda_j-\lambda_j)^{l-1}R_j^{l}E\hat P_j-P_jE\hat R_j,
\end{align*}
which gives the claim for $p=1$, as can be seen by inserting the definition of $\hat R_j^{(k)}$. For the induction step assume that \eqref{EqExplicitRem} holds for $p$. First, the induction beginning can be written as
\begin{equation}\label{EqExpId1}
\hat R_j^{(0)}=R_j^{(0)}-\sum_{l=0}^\infty R_j^{(l)}E\hat R_j^{(1-l)}.
\end{equation}
Similarly, one can show that
\begin{align}\label{EqExpId2}
&\forall k>0,\quad\hat R_j^{(k)}=R_j^{(k)}-\sum_{l=0}^\infty R_j^{(l)}E\hat R_j^{(k+1-l)},\\
&\forall k<0,\quad \hat R_j^{(k)}=-\sum_{l=0}^\infty R_j^{(l)}E\hat R_j^{(k+1-l)}.\label{EqExpId3}
\end{align}
Letting $k=p-k_1-\cdots-k_p$, the claim follows from inserting \eqref{EqExpId1}-\eqref{EqExpId3} into \eqref{EqExplicitRem} and setting $l=k_{p+1}$. It remains to prove \eqref{EqExpId2} and \eqref{EqExpId3}. First, for $k<0$, we insert \eqref{EqBasicBB} and \eqref{EqExpStep1} to get
\begin{align*}
\hat R_j^{(k)}&=-(\hat\lambda_j-\lambda_j)^{-k}\hat P_j\\
&=-(\hat\lambda_j-\lambda_j)^{-k}P_j \hat P_j-(\hat\lambda_j-\lambda_j)^{-k}(I-P_j) \hat P_j\\
&=-(\hat\lambda_j-\lambda_j)^{-k-1}P_j E\hat P_j+\sum_{l=1}^\infty (\hat\lambda_j-\lambda_j)^{l-1-k}R_j^{l}E\hat P_j,
\end{align*}
which gives \eqref{EqExpId3} by inserting the definitions of $R_j^{(k)}$ and $\hat R_j^{(k)}$. Finally, for $k\geq 1$, we have
\begin{equation}\label{EqProofExpId}
\hat R_j^k=(I-P_j)\hat R_j^k+P_j\hat R_j^k.
\end{equation}
Using \eqref{EqBasicBB}, we have
\begin{equation*}
(I-P_j)\hat R_j= R_j(I-\hat P_j)-R_jE\hat R_j
\end{equation*}
and iterating this identity leads to
\begin{align*}
(I-P_j)\hat R_j^k&= R_j^k(I-\hat P_j)-\sum_{l=1}^kR_j^lE\hat R_j^{k+1-l}.
\end{align*}
Inserting this, \eqref{EqExpStep1} and \eqref{EqExpStep2} into \eqref{EqProofExpId}, we get
\begin{align*}
\hat R_j^k&=  R_j^k(I-\hat P_j)-\sum_{l=1}^kR_j^lE\hat R_j^{k+1-l}+P_j\hat R_j^k\\
 &=R_j^k+\sum_{l=k+1}^\infty (\hat\lambda_j-\lambda_j)^{l-k-1}R_j^{l}E\hat P_j-\sum_{l=1}^kR_j^lE\hat R_j^{k+1-l}+P_jE\hat R_j^{k+1}\\
 &=R_j^k-\sum_{l=0}^\infty R_j^{(l)}E\hat R_j^{(k+1-l)},
\end{align*}
which completes the proof of \eqref{EqExpId2}.
\end{proof}

\begin{lemma}\label{LemCoeffBound} Let $n\geq 1$ and let $(k_1,\dots,k_{n+1})\in\mathbb{N}_0^{n+1}$ with $k_1+\dots+k_{n+1}=m$. Then we have
\begin{equation*}
\|R_j^{(k_1)}ER_j^{(k_2)}\cdots R_j^{(k_{n})}ER_j^{(k_{n+1})}\|_2\leq g_j^{n-m}{\delta_j'}^{n}.
\end{equation*}
Moreover, if $k_a=0$ for some $a\leq n+1$ and if $m\geq 1$, then we have
\begin{equation*}
\|R_j^{(k_1)}ER_j^{(k_2)}\cdots R_j^{(k_{n})}ER_j^{(k_{n+1})}\|_2\leq g_j^{n-m}g_j^{-1/2}\||R_j|^{1/2}EP_j\|_2{\delta_j'}^{n-1}.
\end{equation*}
\end{lemma}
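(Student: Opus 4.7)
The plan is to reduce the product to a chain whose constituents are easy to bound, the key step being a symmetric factorization of each $R_j^{(k_i)}$ that peels off a copy of $|R_j|^{1/2}$ or $g_j^{-1/2}P_j$ on every side facing an $E$. Concretely, for an inner index $2\le i\le n$ I would write $R_j^{(k_i)}=F_i^{L}B_iF_i^{R}$, taking $F_i^L=F_i^R=|R_j|^{1/2}$ and $B_i=\operatorname{sgn}(R_j)^{k_i}|R_j|^{k_i-1}$ if $k_i\ge 1$, and $F_i^L=F_i^R=g_j^{-1/2}P_j$, $B_i=-g_jP_j$ if $k_i=0$; in both cases $\|B_i\|_\infty\le g_j^{1-k_i}$. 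For the boundary indices I would peel off only the inner face, writing $R_j^{(k_1)}=A_1F_1^{R}$ and $R_j^{(k_{n+1})}=F_{n+1}^{L}C_{n+1}$, so that $\|A_1\|_\infty\le g_j^{1/2-k_1}$ and $\|C_{n+1}\|_\infty\le g_j^{1/2-k_{n+1}}$.

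Regrouping, the product becomes
\[
A_1\cdot(F_1^{R}EF_2^{L})\cdot B_2\cdot(F_2^{R}EF_3^{L})\cdots B_n\cdot(F_n^{R}EF_{n+1}^{L})\cdot C_{n+1},
\]
a chain of $2n+1$ bounded operators in which each sandwich $F_i^{R}EF_{i+1}^{L}$ is one of $|R_j|^{1/2}E|R_j|^{1/2}$, $|R_j|^{1/2}E(g_j^{-1/2}P_j)$, $(g_j^{-1/2}P_j)E|R_j|^{1/2}$ or $(g_j^{-1/2}P_j)E(g_j^{-1/2}P_j)$, each of operator norm at most $\delta_j'$ by the very definition of $\delta_j'$. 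To pass from operator to Hilbert--Schmidt norm I would apply the standard inequality $\|X_1\cdots X_N\|_2\le\|X_{i_0}\|_2\prod_{i\ne i_0}\|X_i\|_\infty$, placing the Hilbert--Schmidt norm on one rank-one factor. Whenever some $k_a=0$, such a rank-one factor is present along the chain — $A_1=-g_j^{1/2}P_j$ if $a=1$, $C_{n+1}=-g_j^{1/2}P_j$ if $a=n+1$, or $B_a=-g_jP_j$ if $2\le a\le n$ — and its Hilbert--Schmidt norm coincides with its operator-norm bound; multiplying through yields exactly $g_j^{n-m}\delta_j'^{\,n}$, which is the first claim. (The only case where no $k_a=0$ but $m\le n$ cannot occur; the degenerate boundary case $m=0$ means all $k_i=0$, and the product collapses to $(-1)^{n+1}(\operatorname{tr}(P_jEP_j))^nP_j$, whose Hilbert--Schmidt norm $|\operatorname{tr}(P_jEP_j)|^n\le(g_j\delta_j')^n$ matches the asserted bound.)

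For the second, sharper bound I would use the added assumption $m\ge 1$: since a zero index and a positive index coexist, a walk along the chain must reveal two adjacent positions $a'$, $b'=a'\pm1$ with $k_{a'}=0$ and $k_{b'}\ge 1$. The corresponding sandwich is $g_j^{-1/2}P_jE|R_j|^{1/2}$ or its adjoint, whose Hilbert--Schmidt norm is exactly $g_j^{-1/2}\|\,|R_j|^{1/2}EP_j\|_2$. Placing the Hilbert--Schmidt norm on \emph{this} sandwich, and using the operator-norm bounds $\le\delta_j'$ on the remaining $n-1$ sandwiches together with $\le g_j^{1-k_i}$ on the remaining $B_i$ and $\le g_j^{1/2-k_l}$ on $A_1$, $C_{n+1}$, multiplies out to $g_j^{n-m}g_j^{-1/2}\|\,|R_j|^{1/2}EP_j\|_2\delta_j'^{\,n-1}$.

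The real work is purely bookkeeping. The main obstacles I expect are (i) verifying the factorization $R_j^k=|R_j|^{1/2}\operatorname{sgn}(R_j)^k|R_j|^{k-1}|R_j|^{1/2}$ is well-defined by functional calculus on the range of $I-P_j$ (taking $|R_j|^0=I-P_j$ when $k=1$), (ii) checking that left and right faces on adjacent indices agree in each of the four combinations of $(k_i,k_{i+1})\in\{0\}\cup\mathbb{N}$, and (iii) for the second claim, the short combinatorial observation that zero-and-positive indices in the same tuple always admit an adjacent mixed neighbour pair.
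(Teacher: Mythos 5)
Your proposal is correct and follows essentially the same route as the paper: factor each $R_j^{(k_a)}$ so that every face adjacent to an $E$ carries $|R_j|^{1/2}$ or $g_j^{-1/2}P_j$, bound each resulting sandwich by $\delta_j'$ (using that the sandwiches involving $P_j$ are rank one, so their operator and Hilbert--Schmidt norms coincide), and place the single Hilbert--Schmidt norm on a rank-one factor, which exists whenever some $k_a=0$; the adjacency argument for the second claim is also the paper's. Your explicit restriction of the Hilbert--Schmidt placement to the case ``some $k_a=0$'' is in fact necessary --- when all $k_a\geq 1$ the chain has no rank-one factor and the argument only yields the operator-norm bound --- but the paper's own proof has exactly the same limitation and only ever uses the operator-norm version in that case, so nothing is lost relative to it.
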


\begin{remark}\label{RemCoeffBound}
Analogous results hold for $R_j^{(k_1)}$ or $R_j^{(k_{n+1})}$ replaced by $|R_j|^{1/2}$.
\end{remark}

\begin{proof}
Set
\begin{equation}\label{EqDefSj}
S_j^{(k)}=P_j\text{ if $k=0$ and }S_j^{(k)}=|R_j|^{1/2} \text{ if $k>0$.}
\end{equation}
Let us focus on the case that $k_1,k_{n+1}\geq 1$, the other cases follow by similar arguments. First, we have
\begin{align}
&\|R_j^{(k_1)}ER_j^{(k_2)}\cdots R_j^{(k_{n})}ER_j^{(k_{n+1})}\|_2\nonumber\\
&\leq g_j^{-\sum_{a=1}^{n+1}(k_a-1)_+-1}\prod_{a=1}^{n}\|S_j^{(k_a)}ES_j^{(k_{a+1})}\|_\infty.\label{EqCoeffBound}
\end{align}
Using that all terms appearing in the product are of the form
\[
\Vert|R_j|^{1/2}E|R_j|^{1/2}\Vert_\infty,\quad \Vert |R_j|^{1/2}E P_j\Vert_\infty,\quad \Vert P_j E P_j\Vert_\infty
\]
and also that
\begin{equation*}
n+1-m+\sum_{a=1}^{n+1}(k_a-1)_+=|\{a:R^{(k_a)}=-P_j\}|,
\end{equation*}
we get
\begin{align*}
&\|R_j^{(k_1)}ER_j^{(k_2)}\cdots R_j^{(k_{n})}ER_j^{(k_{n+1})}\|_2\\
&\leq g_j^{n-m}\max(\Vert|R_j|^{1/2}E|R_j|^{1/2}\Vert_\infty,g_j^{-1/2}\Vert |R_j|^{1/2}E P_j\Vert_2,g_j^{-1}\Vert P_j E P_j\Vert_2)^{n},
\end{align*}
which gives the first claim.

Moreover, if $k_a=0$ for some $a\leq n+1$ and if $m\geq 1$, then there is at least on $b\leq n$ such that $\|S_j^{(k_{b})}ES_j^{(k_{b+1})}\|_2=\||R_j|^{1/2}EP_j\|_2$, leading to the second claim.
\end{proof}

The last lemma states that under the condition $\delta_j<1/2$, it is possible to obtain tight bounds for the weighted expression $\||R_j|^{-1/2}\hat P_j\|_2$, by exploiting a contraction property. Later, this term will arise when applying the weighting in the proof of Lemma \ref{LemCoeffBound} to the remainder term from Lemma \ref{LemExplicitRem}.

\begin{lemma}\label{LemER} Suppose that $\delta_j<1/2$. Then we have
\[
\Vert|R_j|^{-1/2}\hat P_j\Vert_2\leq \frac{\Vert|R_j|^{1/2}E P_j\Vert_2}{1-2\delta_j}.
\]
\end{lemma}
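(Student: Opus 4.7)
The starting point is the identity
\[
(I-P_j)\hat P_j = \sum_{k\neq j}\frac{1}{\hat\lambda_j - \lambda_k}\, P_k E \hat P_j
\]
obtained inside the proof of Lemma \ref{LemExplicitRem} (equation \eqref{EqExpStep2}); Lemma \ref{LemEVSeparation} guarantees $\hat\lambda_j \ne \lambda_k$ for all $k\ne j$. I would apply the operator $|R_j|^{-1/2} = \sum_{k\neq j}\sqrt{|\lambda_k-\lambda_j|}\,P_k$ (defined on the range of $I-P_j$) to both sides. This gives $|R_j|^{-1/2}\hat P_j = A E \hat P_j$, where
\[
A = M\,|R_j|^{1/2}, \qquad M = \sum_{k\neq j}\frac{|\lambda_k - \lambda_j|}{\hat\lambda_j - \lambda_k}\,P_k.
\]
The goal is then to turn this into a self-referential inequality for $\||R_j|^{-1/2}\hat P_j\|_2$ and solve it, much in the spirit of a contraction mapping argument.

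The crucial a priori estimate is $\|M\|_\infty \leq 1/(1-\delta_j)$. This follows directly from Lemma \ref{LemEVSeparation}: for $k>j$ we have $\hat\lambda_j-\lambda_k \geq \lambda_j-\delta_j g_j-\lambda_k \geq (1-\delta_j)(\lambda_j-\lambda_k)$, since $\lambda_j-\lambda_k\geq g_j$, and the case $k<j$ is symmetric. Next I would split $\hat P_j = P_j\hat P_j + (I-P_j)\hat P_j$ and use $(I-P_j)\hat P_j = |R_j|^{1/2}\cdot(|R_j|^{-1/2}\hat P_j)$ to rewrite
\[
|R_j|^{-1/2}\hat P_j = A E P_j\hat P_j + AE|R_j|^{1/2}\cdot|R_j|^{-1/2}\hat P_j.
\]
Taking Hilbert--Schmidt norms, using $\|XY\|_2\leq \|X\|_\infty\|Y\|_2$ and $\|XY\|_2\leq \|X\|_2\|Y\|_\infty$, the factorization $A = M|R_j|^{1/2}$, the bound $\|M\|_\infty\leq (1-\delta_j)^{-1}$, and $\|\,|R_j|^{1/2}E|R_j|^{1/2}\|_\infty\leq \delta_j'\leq \delta_j$ (from \eqref{EqEquivDefDelta}), yields
\[
\|\,|R_j|^{-1/2}\hat P_j\|_2 \leq \frac{\|\,|R_j|^{1/2}EP_j\|_2}{1-\delta_j} + \frac{\delta_j}{1-\delta_j}\|\,|R_j|^{-1/2}\hat P_j\|_2.
\]
Since $\delta_j<1/2$ forces $\delta_j/(1-\delta_j)<1$, rearranging gives the stated bound with denominator $1-2\delta_j$.

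The one genuine subtlety, and what I would flag as the main obstacle, is that $|R_j|^{-1/2}$ is unbounded, so a priori $\|\,|R_j|^{-1/2}\hat P_j\|_2$ could be infinite and the final rearrangement would be invalid. I would handle this by running the whole argument with the truncated operator $|R_j|_N^{-1/2} = \sum_{k\neq j,\,k\leq N}\sqrt{|\lambda_k-\lambda_j|}\,P_k$ in place of $|R_j|^{-1/2}$, obtaining the bound uniformly in $N$, and then passing to the limit by monotone convergence: since $\hat P_j=\hat u_j\otimes\hat u_j$ is rank-one, $\|\,|R_j|_N^{-1/2}\hat P_j\|_2^2 = \sum_{k\neq j,\,k\leq N}|\lambda_k-\lambda_j|\,|\langle u_k,\hat u_j\rangle|^2$ is increasing in $N$. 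In the finite-dimensional setting that carries the main narrative of the paper, this technicality is absent.
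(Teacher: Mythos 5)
Your proposal is correct and follows essentially the same route as the paper: the paper bounds each $\Vert P_k\hat P_j\Vert_2$ term-by-term via \eqref{EqBasicBB} and Lemma \ref{LemEVSeparation} (which is exactly your operator $M$ with $\|M\|_\infty\le(1-\delta_j)^{-1}$), then performs the identical split $\hat P_j=P_j\hat P_j+(I-P_j)\hat P_j$ with $I-P_j=|R_j|^{1/2}|R_j|^{-1/2}$ and solves the same self-referential inequality. Your truncation remark addressing the a priori finiteness of $\Vert|R_j|^{-1/2}\hat P_j\Vert_2$ is a legitimate point of rigor that the paper's componentwise summation handles implicitly.
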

\begin{proof}
By \eqref{EqBasicBB} and  Lemma \ref{LemEVSeparation}, we have
\[
\forall k\neq j,\quad \Vert P_k\hat P_j\Vert_2^2=\frac{\Vert P_kE \hat P_j\Vert_2^2}{(\hat \lambda_j-\lambda_k)^2}\leq \frac{1}{(1-\delta_j)^2}\frac{\Vert P_kE \hat P_j\Vert_2^2}{(\lambda_j-\lambda_k)^2}
\]
and thus
\begin{equation}\label{EqNormalizedBasicIneq}
\Vert|R_j|^{-1/2}\hat P_j\Vert_2\leq \frac{\Vert|R_j|^{1/2}E \hat P_j\Vert_2}{1-\delta_j}.
\end{equation}
Applying the triangular inequality, the identities $I=P_j+(I-P_j)$ and $I-P_j=|R_j|^{1/2}|R_j|^{-1/2}$ and the inequality $\delta_j'\leq \delta_j<1/2$, we get
\begin{align}
\Vert|R_j|^{1/2}E \hat P_j\Vert_2&\leq \Vert|R_j|^{1/2}E P_j\hat P_j\Vert_2+\Vert|R_j|^{1/2}E (I-P_i)\hat P_j\Vert_2\nonumber\\
&=\Vert|R_j|^{1/2}E P_j\hat P_j\Vert_2+\Vert|R_j|^{1/2}E |R_j|^{1/2}|R_j|^{-1/2}\hat P_j\Vert_2\nonumber\\
&\leq \Vert|R_j|^{1/2}E P_j\hat P_j\Vert_2+\Vert|R_j|^{1/2}E |R_j|^{1/2}\Vert_\infty\Vert|R_j|^{-1/2}\hat P_j\Vert_2\nonumber\\
&\leq \Vert|R_j|^{1/2}E P_j\Vert_2+\delta_j\Vert|R_j|^{-1/2}\hat P_j\Vert_2\label{EqRecIneq}.
\end{align}
Inserting \eqref{EqRecIneq} into \eqref{EqNormalizedBasicIneq}, we get
\[
\Vert|R_j|^{-1/2}\hat P_j\Vert_2\leq \frac{\Vert|R_j|^{1/2}E P_j\Vert_2}{1-\delta_j}+\frac{\delta_j\Vert|R_j|^{-1/2}\hat P_j\Vert_2}{1-\delta_j},
\]
and the claim follows.
\end{proof}

\subsection{Proof of Theorem \ref{ThmTaylorSP}}
By Lemma \ref{LemExplicitRem} and the triangular inequality, we have
\begin{align}\label{EqExplRemNorm}
\|\hat P_j-\sum_{n=0}^{p-1}P_j^{(n)}\|_2\leq \sum_{k\in\mathbb{Z}}\sum_{\substack{k_1,\dots,k_p\geq 0\\k_1+\dots+k_{p}=p-k}}\|R_j^{(k_1)}E\cdots ER_j^{(k_p)}E\hat R_j^{(k)}\|_2.
\end{align}
We now analyze the right-hand side term by term. For this, let $(k_1,\dots,k_p,k)\in\mathbb{N}_0^p\times\mathbb{Z}$ with $k_1+\dots+k_p=p-k$. We consider separately the cases $k\leq 0$ and $k\geq 1$.  First, for $k\leq 0$, by the identity $I=P_j+(I-P_j)$ and the triangular inequality, we have
\begin{align*}
&\|R_j^{(k_1)}E\cdots ER_j^{(k_p)}E\hat R_j^{(k)}\|_2\\
&=(\hat\lambda_j-\lambda_j)^{-k}\|R_j^{(k_1)}E\cdots ER_j^{(k_p)}E\hat P_j\|_2\\
&\leq (\hat\lambda_j-\lambda_j)^{-k}\|R_j^{(k_1)}E\cdots ER_j^{(k_p)}EP_j\hat P_j\|_2\\
&+(\hat\lambda_j-\lambda_j)^{-k}\|R_j^{(k_1)}E\cdots ER_j^{(k_p)}E(I-P_j)\hat P_j\|_2.
\end{align*}
Thus, by the identity $I-P_j=|R_j|^{1/2}|R_j|^{-1/2}$ and simple properties of the Hilbert-Schmidt norm, we get
\begin{align}
&\|R_j^{(k_1)}E\cdots ER_j^{(k_p)}E\hat R_j^{(k)}\|_2\nonumber\\
&\leq (\hat\lambda_j-\lambda_j)^{-k}\|R_j^{(k_1)}E\cdots ER_j^{(k_p)}EP_j\|_\infty\nonumber\\\
&+(\hat\lambda_j-\lambda_j)^{-k}\|R_j^{(k_1)}E\cdots ER_j^{(k_p)}E|R_j|^{1/2}\|_\infty\||R_j|^{-1/2}\hat P_j\|_2.\label{EqCoeffExcessRisk1}
\end{align}
Now, by Lemma \ref{LemCoeffBound} and Remark \ref{RemCoeffBound}, we have
\begin{align*}
&\|R_j^{(k_1)}E\cdots ER_j^{(k_p)}EP_j\|_\infty\leq g_j^{k}g_j^{-1/2}\Vert |R_j|^{1/2}EP_j\Vert_2{\delta_j'}^{p-1},\\
&\|R_j^{(k_1)}E\cdots ER_j^{(k_p)}E|R_j|^{1/2}\|_\infty\leq g_j^{k-1/2}{\delta_j'}^p.
\end{align*}
Inserting this into \eqref{EqCoeffExcessRisk1} and using Lemmas \ref{LemEVSeparation} and \ref{LemER}, we get for $k\leq 0$,
\begin{align*}
&\|R_j^{(k_1)}E\cdots ER_j^{(k_p)}E\hat R_j^{(k)}\|_2\\
&\leq g_j^{-1/2}\Vert |R_j|^{1/2}EP_j\Vert_2{\delta_j'}^{p-1}\delta_j^{-k}+g_j^{-1/2}\Vert |R_j|^{1/2}EP_j\Vert_2{\delta_j'}^{p}\frac{\delta_j^{-k}}{1-2\delta_j}\\
&\leq g_j^{-1/2}\Vert |R_j|^{1/2}EP_j\Vert_2{\delta_j'}^{p-1}\frac{\delta_j^{-k}}{1-2\delta_j}.
\end{align*}
From this and Lemma \ref{LemNumberCoeff}, we conclude that
\begin{align}
&\sum_{k\leq 0}\sum_{\substack{k_1,\dots,k_p\geq 0\\k_1+\dots+k_{p}=p-k}}\|R_j^{(k_1)}E\cdots ER_j^{(k_p)}E\hat R_j^{(k)}\|_2\nonumber\\
%&\leq g_j^{-1/2}\Vert |R_j|^{1/2}EP_j\Vert_2\frac{{\delta_j'}^{p-1}}{1-2\delta_j}\sum_{k\leq 0}%\binom{2p-1-k}{p-1}\delta_j^{-k}\nonumber\\
&\leq g_j^{-1/2}\Vert |R_j|^{1/2}EP_j\Vert_2\frac{{\delta_j'}^{p-1}}{1-2\delta_j}\sum_{k\leq 0}2^{2p-1-k}\delta_j^{-k}\nonumber\\
&=2g_j^{-1/2}\Vert |R_j|^{1/2}EP_j\Vert_2\frac{(4{\delta_j'})^{p-1}}{(1-2\delta_j)^2}.\label{EqRemainderCoeffBound1}
\end{align}
Next, consider the case $k\geq 1$. Then we have
\begin{align}
&\|R_j^{(k_1)}E\cdots ER_j^{(k_p)}E\hat R_j^{(k)}\|_2\nonumber\\
&=\|R_j^{(k_1)}E\cdots ER_j^{(k_p)}E\hat R_j^{k}\|_2\nonumber\\
&\leq \|R_j^{(k_1)}E\cdots ER_j^{(k_p)}EP_j\|_2\|P_j\hat R_j^{k}\|_2\nonumber\\
&+\|R_j^{(k_1)}E\cdots ER_j^{(k_p)}E|R_j|^{1/2}\|_2\||R_j|^{-1/2}\hat R_j^{k}\|_\infty\label{EqCoeffExcessRisk2}.
\end{align}
By  Lemma \ref{LemCoeffBound} and Remark \ref{RemCoeffBound}, using the fact that $k\geq 1$ implies $k_j=0$ for some $j\leq p$, we have
\begin{align*}
&\|R_j^{(k_1)}E\cdots ER_j^{(k_p)}EP_j\|_2\leq g_j^{k}{\delta_j'}^{p},\\
&\|R_j^{(k_1)}E\cdots ER_j^{(k_p)}E|R_j|^{1/2}\|_2\leq g_j^{k-1}\Vert P_jE|R_j|^{1/2}\Vert_2{\delta_j'}^{p-1}.
\end{align*}
By Lemma \ref{LemEVSeparation}, we have
\begin{align*}
\|P_j\hat R_j^{k}\|_2&=\sqrt{\sum_{k\neq j}\frac{1}{(\hat\lambda_k-\lambda_j)^{2k}}\|P_j\hat P_k\|_2^2}\\
&\leq (1-\delta_j)^{-k}g_j^{-k}\|P_j(I-\hat P_j)\|_2\\
&=(1-\delta_j)^{-k}g_j^{-k}\|(I-P_j)\hat P_j\|_2\leq (1-\delta_j)^{-k}g_j^{-k-1/2}\||R_j|^{-1/2}\hat P_j\|_2
\end{align*}
and thus by Lemma \ref{LemER}
\begin{equation}\label{EqCoeffBoundRem1}
\|P_j\hat R_j^{k}\|_2\leq g_j^{-k-1/2}\||R_j|^{1/2}E P_j\|_2\frac{(1-\delta_j)^{-k}}{1-2\delta_j}
\end{equation}
Similarly we have
\begin{align}
\||R_j|^{-1/2}\hat R_j^{k}\|_\infty\leq (1-\delta_j)^{-k}g_j^{-k+1/2}.\label{EqCoeffBoundRem2}
\end{align}
Inserting these inequalities into \eqref{EqCoeffExcessRisk2}, we get for $k\geq 1$,
\begin{equation*}
\|R_j^{(k_1)}E\cdots ER_j^{(k_p)}E\hat R_j^{k}\|_2\leq g_j^{-1/2}\Vert P_jE|R_j|^{1/2}\Vert_2{\delta_j'}^{p-1}\frac{(1-\delta_j)^{-k}}{1-2\delta_j}.
\end{equation*}
From this and Lemma \ref{LemNumberCoeff}, we conclude that
\begin{align}
&\sum_{k=1}^p\sum_{\substack{k_1,\dots,k_p\geq 0\\k_1+\dots+k_{p}=p-k}}\|R_j^{(k_1)}E\cdots ER_j^{(k_p)}E\hat R_j^{(k)}\|_2\nonumber\\
%&\leq g_j^{-1/2}\Vert P_jE|R_j|^{1/2}\Vert_2\frac{{\delta_j'}^{p-1}}{1-2\delta_j}\sum_{k=1}^p
%\binom{2p-1-k}{p-1}(1-\delta_j)^{-k}\nonumber\\
&\leq g_j^{-1/2}\Vert P_jE|R_j|^{1/2}\Vert_2\frac{{\delta_j'}^{p-1}}{1-2\delta_j}\sum_{k=1}^\infty2^{2p-1-k}(1-\delta_j)^{-k}\nonumber\\
&=2g_j^{-1/2}\Vert P_jE|R_j|^{1/2}\Vert_2\frac{(4{\delta_j'})^{p-1}}{(1-2\delta_j)^2}.\label{EqRemainderCoeffBound2}
\end{align}
Inserting \eqref{EqRemainderCoeffBound1} and \eqref{EqRemainderCoeffBound2} into \eqref{EqExplRemNorm} completes the proof.\qed

\subsection{Proof of Theorem \ref{ThmTaylorEV}}
We have
\begin{equation*}
\hat\lambda_j-\lambda_j=\operatorname{tr}(\hat P_jE)+ \operatorname{tr}(\hat P_jR_j^{-1})
\end{equation*}
and thus by  Lemma \ref{LemExplicitRem} and the triangular inequality
\begin{align}
|\hat \lambda_j-\sum_{n=0}^{p-1}\lambda_j^{(n)}|&\leq \sum_{k\in\mathbb{Z}}\sum_{\substack{k_1,\dots,k_{p-1}\geq 0\\k_1+\dots+k_{p-1}=p-1-k}}|\operatorname{tr}(ER_j^{(k_1)}E\cdots ER_j^{(k_{p-1})}E\hat R_j^{(k)})|\nonumber\\
&\quad+\sum_{k\in\mathbb{Z}}\sum_{\substack{k_1\geq 1,k_2,\dots,k_{p}\geq 0\\k_1+\dots+k_{p}=p-k}}|\operatorname{tr}(R_j^{k_1-1}E\cdots ER_j^{(k_{p})}E\hat R_j^{(k)})|\label{EqExplRemNormEV}.
\end{align}
We first consider the first term on the right-hand side of \eqref{EqExplRemNormEV}. For $k\leq 0$, we have
\begin{align*}
&|\operatorname{tr}(ER_j^{(k_1)}E\cdots ER_j^{(k_{p-1})}E\hat R_j^{(k)})|\\
&=|\hat\lambda_j-\lambda_j|^{-k}|\operatorname{tr}(\hat P_jER_j^{(k_1)}E\cdots ER_j^{(k_{p-1})}E\hat P_j)|\\
&\leq |\hat\lambda_j-\lambda_j|^{-k}\|\hat P_jER_j^{(k_1)}E\cdots ER_j^{(k_{p-1})}E\hat P_j\|_2.
\end{align*}
Inserting $I=P_j+|R_j|^{1/2}|R_j|^{-1/2}$ twice, we have
\begin{align*}
&\|\hat P_jER_j^{(k_1)}E\cdots ER_j^{(k_{p-1})}E\hat P_j\|_2\\
&\leq \| P_jER_j^{(k_1)}E\cdots ER_j^{(k_{p-1})}E P_j\|_2\\
&+\||R_j|^{-1/2}\hat P_j\|_2\| P_jER_j^{(k_1)}E\cdots ER_j^{(k_{p-1})}E|R_j|^{1/2}\|_2\\
&+\||R_j|^{-1/2}\hat P_j\|_2\||R_j|^{1/2}ER_j^{(k_1)}E\cdots ER_j^{(k_{p-1})}EP_j\|_2\\
&+\||R_j|^{-1/2}\hat P_j\|_2^2\||R_j|^{1/2}ER_j^{(k_1)}E\cdots ER_j^{(k_{p-1})}E|R_j|^{1/2}\|_2.
\end{align*}
By Lemmas \ref{LemEVSeparation}, \ref{LemCoeffBound}, and \ref{LemER}, we get for $k\leq 0$,
\begin{align}
&|\operatorname{tr}(R_j^{(k_1)}E\cdots ER_j^{(k_{p-1})}E\hat R_j^{(k)}E)|\nonumber\\
&\leq \Vert P_jE|R_j|^{1/2}\Vert_2^2\Big({\delta_j'}^{p-2}\delta_j^{-k}+\frac{2{\delta_j'}^{p-1}\delta_j^{-k}}{1-2\delta_j}+\frac{{\delta_j'}^{p}\delta_j^{-k}}{(1-2\delta_j)^2}\Big)\nonumber\\
&\leq \Vert P_jE|R_j|^{1/2}\Vert_2^2{\delta_j'}^{p-2}\frac{\delta_j^{-k}}{(1-2\delta_j)^2}.\label{EqCoeffBoundEV1}
\end{align}
Similarly, for $k\geq 1$, using the fact that $k_a=0$ for some $a\leq p-1$, we have
\begin{align}
&|\operatorname{tr}(R_j^{(k_1)}E\cdots ER_j^{(k_{p-1})}E\hat R_j^{(k)}E)|\nonumber\\
&=\operatorname{tr}(P_jER_j^{(k_{a+1})}E\cdots ER_j^{(k_{p-1})}E\hat R_j^kER_j^{(k_1)}E\cdots ER_j^{(k_{a-1})}EP_j)\nonumber\\
&\leq \||P_j\hat R_j^{(k)}P_j\|_\infty\|P_jER_j^{(k_{a+1})}\cdots R_j^{(k_{p-1})}EP_j\|_2\|P_jER_j^{(k_1)}\cdots R_j^{(k_{a-1})}EP_j\|_2\nonumber\\
&+\|P_j\hat R_j^{(k)}|R_j|^{-1/2}\|_\infty\|P_jER_j^{(k_{a+1})}\cdots R_j^{(k_{p-1})}EP_j\|_2\nonumber\\
&\ \ \cdot\||R_j|^{1/2}ER_j^{(k_1)}\cdots R_j^{(k_{a-1})}EP_j\|_2\nonumber\\
&+\||R_j|^{-1/2}\hat R_j^{(k)}P_j\|_\infty\|P_jER_j^{(k_{a+1})}\cdots R_j^{(k_{p-1})}E|R_j|^{1/2}\|_2\nonumber\\
&\ \ \cdot\|P_jER_j^{(k_1)}\cdots R_j^{(k_{a-1})}EP_j\|_2\nonumber\\
&+\||R_j|^{-1/2}\hat R_j^{(k)}|R_j|^{-1/2}\|_\infty\|P_jER_j^{(k_{a+1})}\cdots R_j^{(k_{p-1})}E|R_j|^{1/2}\|_2\nonumber\\
&\ \ \cdot\||R_j|^{1/2}ER_j^{(k_1)}\cdots R_j^{(k_{a-1})}EP_j\|_2\label{EqCoeffBoundEVk1}.
\end{align}
By \eqref{EqCoeffBoundEVPart1} and \eqref{EqCoeffBoundEVPart2}, we have
\begin{align}
&\||P_j\hat R_j^{(k)}P_j\|_\infty \leq g_j^{-k}g_j^{-1}\Vert P_jE|R_j|^{1/2}\Vert_2^2\frac{(1-\delta_j)^{-k}}{(1-2\delta_j)^2},\nonumber\\
&\|P_j\hat R_j^{(k)}|R_j|^{-1/2}\|_\infty\leq g_j^{-k}\||R_j|^{1/2}E P_j\|_2\frac{(1-\delta_j)^{-k}}{1-2\delta_j},\nonumber\\
&\||R_j|^{-1/2}\hat R_j^{(k)}|R_j|^{-1/2}\|_\infty\leq (1-\delta_j)^{-k}g_j^{-k+1}\label{EqCoeffBoundEVRem}.
\end{align}
Inserting these inequalities, Lemma \ref{LemCoeffBound}, and Remark \ref{RemCoeffBound} (using the fact that $k_a=0$ for some $a\leq p-1$), we get for $k\geq 1$,
\begin{align}
&|\operatorname{tr}(ER_j^{(k_1)}E\cdots ER_j^{(k_{p-1})}E\hat R_j^{(k)})|\nonumber\\
&\leq \Vert P_jE|R_j|^{1/2}\Vert_2^2\Big({\delta_j'}^{p-2}+\frac{2{\delta_j'}^{p-1}}{1-2\delta_j}+\frac{{\delta_j'}^{p}}{(1-2\delta_j)^2}\Big)(1-\delta_j)^{-k}\nonumber\\
&\leq \Vert P_jE|R_j|^{1/2}\Vert_2^2{\delta_j'}^{p-2}\frac{(1-\delta_j)^{-k}}{(1-2\delta_j)^2}.\label{EqCoeffBoundEV2}
\end{align}
Using \eqref{EqCoeffBoundEV1}, \eqref{EqCoeffBoundEV2}, and Lemma \ref{LemNumberCoeff}, we get
\begin{align}
&\sum_{k\in\mathbb{Z}}\sum_{\substack{k_1,\dots,k_{p-1}\geq 0\\k_1+\dots+k_{p-1}=p-1-k}}\operatorname{tr}(ER_j^{(k_1)}E\cdots ER_j^{(k_{p-1})}E\hat R_j^{(k)})\nonumber\\
&\leq 4\Vert P_jE|R_j|^{1/2}\Vert_2^2\frac{(4\delta_j')^{p-2}}{(1-2\delta_j)^3}\label{EqCoeffBoundEVPart1}.
\end{align}
Next, we consider the second term on the right-hand side of \eqref{EqExplRemNormEV}.
For $k\leq 0$, we have
\begin{align*}
&|\operatorname{tr}(R_j^{k_1-1}E\cdots ER_j^{(k_{p})}E\hat R_j^{(k)})|\\
&\leq |\hat\lambda_j-\lambda_j|^{-k}\|\hat P_jR_j^{k_1-1}E\cdots ER_j^{(k_{p})}E\hat P_j\|_2\\
&\leq |\hat\lambda_j-\lambda_j|^{-k}\|\hat P_j|R_j|^{-1/2}\|_2\||R_j|^{k_1-1/2}E\cdots ER_j^{(k_{p})}EP_j\|_2\\
&+|\hat\lambda_j-\lambda_j|^{-k}\|\hat P_j|R_j|^{-1/2}\|_2^2\||R_j|^{k_1-1/2}E\cdots ER_j^{(k_{p})}E|R_j|^{1/2}\|_2,
\end{align*}
and thus, by Lemmas \ref{LemEVSeparation}, \ref{LemCoeffBound}, and \ref{LemER}, we get
\begin{align}
&|\operatorname{tr}(R_j^{k_1-1}E\cdots ER_j^{(k_{p})}E\hat R_j^{(k)})|\nonumber\\
&\leq \Vert P_jE|R_j|^{1/2}\Vert_2^2\Big(\frac{{\delta_j'}^{p-1}\delta_j^{-k}}{1-2\delta_j}+\frac{{\delta_j'}^{p}\delta_j^{-k}}{(1-2\delta_j)^2}\Big)\nonumber\\
&\leq \Vert P_jE|R_j|^{1/2}{\delta_j'}^{p-2}\Vert_2^2\frac{\delta_j^{-k}}{(1-2\delta_j)^2}\label{EqCoeffBoundEV3}.
\end{align}
On the other hand, for $k\geq 1$, we use the fact that $k_a=0$ for some $a\leq p-1$ to obtain
\begin{align*}
&|\operatorname{tr}(R_j^{k_1-1}E\cdots ER_j^{(k_{p})}E\hat R_j^{k})|\\
&\leq \|P_jER_j^{(k_{a+1})}E\cdots ER_j^{(k_{p})}E\hat R_j^kR_j^{k_1-1}E\cdots ER_j^{(k_{a-1})}EP_j)\|_2\\
&\leq \||R_j|^{-1/2}\hat R_j^{(k)}|R_j|^{-1/2}\|_\infty\|P_jER_j^{(k_{a+1})}E\cdots ER_j^{(k_{p})}E|R_j|^{1/2}\|_2\\
&\ \ \cdot\||R_j|^{k_1-1/2}E\cdots ER_j^{(k_{a-1})}EP_j\|_2\\
&+ \|P_j\hat R_j^{(k)}|R_j|^{-1/2}\|_\infty\|P_jER_j^{(k_{a+1})}E\cdots ER_j^{(k_{p})}EP_j\|_2\\
&\ \ \cdot\||R_j|^{k_1-1/2}E\cdots ER_j^{(k_{a-1})}EP_j\|_2.
\end{align*}
Thus, by Lemmas \ref{LemEVSeparation} and \ref{LemCoeffBound} and \eqref{EqCoeffBoundEVRem}, we get
\begin{align}
&|\operatorname{tr}(R_j^{k_1-1}E\cdots ER_j^{(k_{p})}E\hat R_j^{k})|\nonumber\\
&\leq \Vert P_jE|R_j|^{1/2}\Vert_2^2\Big({\delta_j'}^{p-2}+\frac{{\delta_j'}^{p-1}}{1-2\delta_j}\Big)(1-\delta_j)^{-k}\nonumber\\
&\leq \Vert P_jE|R_j|^{1/2}\Vert_2^2{\delta_j'}^{p-2}\frac{(1-\delta_j)^{-k}}{1-2\delta_j}.\label{EqCoeffBoundEV4}
\end{align}
Using \eqref{EqCoeffBoundEV3}, \eqref{EqCoeffBoundEV4}, and Lemma \ref{LemNumberCoeff}, we get
\begin{align}
&\sum_{k\in\mathbb{Z}}\sum_{\substack{k_1\geq 1,k_2,\dots,k_{p}\geq 0\\k_1+\dots+k_{p}=p-k}}|\operatorname{tr}(R_j^{k_1-1}E\cdots ER_j^{(k_{p})}E\hat R_j^{(k)})|\nonumber\\
&\leq 8\Vert P_jE|R_j|^{1/2}\Vert_2^2\frac{(4\delta_j')^{p-2}}{(1-2\delta_j)^3}.\label{EqCoeffBoundEVPart2}
\end{align}
Inserting \eqref{EqCoeffBoundEVPart1} and \eqref{EqCoeffBoundEVPart2} into \eqref{EqExplRemNormEV} completes the proof.
\qed

\subsection{Proofs for the consequences}
\begin{proof}[Proof of Corollary \ref{CorTaylorExpPS}]
By Lemmas \ref{LemNumberCoeff} and \ref{LemCoeffBound}, we have $\|P_j^{(n)}\|_2\leq g_j^{-1/2}\Vert P_jE|R_j|^{1/2}\Vert_24^n{\delta_j'}^{n-1}$ and thus by Corollary \ref{CorPertSeries} and the triangular inequality,
\begin{align*}
\|\hat P_j-\sum_{n=0}^{p-1}P_j^{(n)}\|_2\leq \sum_{n\geq p}\|P_j^{(n)}\|_2\leq 4g_j^{-1/2}\Vert P_jE|R_j|^{1/2}\Vert_2\frac{(4\delta_j')^{p-1}}{1-4\delta_j'}.
\end{align*}
To obtain the second claim note that by Lemmas \ref{LemNumberCoeff} and \ref{LemCoeffBound}, we have
\begin{align*}
|\lambda_j^{(n)}|&\leq |\operatorname{tr}(P_j^{(n-1)}E)|+|\operatorname{tr}(P_j^{(n)}R_j^{-1})|\\
&\leq (4^{n-1}+4^{n-1}){\delta_j'}^{n-2}\Vert P_jE|R_j|^{1/2}\Vert_2^2,
\end{align*}
and the second claim follows similarly as above by Corollary \ref{CorPertSeries} and the triangular inequality.
\end{proof}

\begin{proof}[Proof of Corollary \ref{CorImprovedBound}]
 Let us first show that for every $(k_1,\dots,k_{n+1})\in\mathbb{N}_{0}^{n+1}$ with $k_1+\dots+k_{n+1}=n$, there is a $m\leq n$ such that
\begin{equation}\label{EqImprovedCoeffBound}
\|R_j^{(k_1)}ER_j^{(k_2)}\cdots R_j^{(k_{n})}ER_j^{(k_{n+1})}\|_2\leq \|(R_jE)^m P_j\|_2{\delta_j'}^{n-m}.
\end{equation}
To show this, let $a_1,\dots,a_r$, be the indices such that $k_{a_1}=\dots=k_{a_r}=0$, leading to
\begin{align*}
&\|R_j^{(k_1)}ER_j^{(k_2)}\cdots R_j^{(k_{p})}ER_j^{(k_{n+1})}\|_2\\
&=\|R_j^{(k_1)}ER_j^{(k_2)}\cdots R_j^{(k_{a_1-1})}EP_j\|_2\|P_jER_j^{(k_{a_1+1})}\cdots R_j^{(k_{a_2-1})}EP_j\|_2\\
&\quad\cdot\dots\cdot\|P_jER_j^{(k_{a_{r}+1})}\cdots R_j^{(k_{n+1})}\|_2.
\end{align*}
By assumption, we either have $r=1$ and $k_a=1$ for all $a\neq a_1$, or there is either a term in the product which is of the form $\|P_j(ER_j)^{m-1}ER_j^{k}\cdots\|_2\leq \|P_j(ER_j)^m\|_2\|R_j^{k-1}\cdots\|_2$ with $m\leq n-1$ and $k\geq 2$. Combining this observation with Lemma \ref{LemCoeffBound}, we get \eqref{EqImprovedCoeffBound}. Moreover, the number of $n+1$-tuple such that $k_1+\dots+k_{n+1}=n$ and such that a term $\|P_j(ER_j)^{m-1}ER_j^{k}\cdots\|_2$ with $m\leq n-1$ and $k\geq 2$ exists in the above product is bounded by $2(n-m+1)4^{n-m}$, as can be seen by using Lemma \ref{LemNumberCoeff}. Applying Corollary \ref{CorPertSeries} and the triangular inequality, we arrive at
\begin{align*}
\|\hat P_j-P_j\|_2&\leq \sum_{m=1}^\infty\sum_{n=m}^\infty\|P_j(ER_j)^m\|_2(2n-2m+4)(4\delta_j')^{n-m}\\
&\leq C\sum_{m=1}^\infty\|P_j(ER_j)^m\|_2,
\end{align*}
and the claim follows.
\end{proof}

\begin{proof}[Proof of Corollary \ref{CorImprovedBound2}]
Separate the cases $\delta_j'< 1/8$ and $\delta_j'\geq  1/8$. In the former case, the claim follows from Corollary \ref{CorImprovedBound} and Lemma \ref{LemCoeffBound}. In the latter case, we use $\|\hat P_j-P_j\|_2\leq\sqrt{2}\leq \sqrt{2}(8\delta_j')^p$, and the claim follows.
\end{proof}

\begin{proof}[Proof of Corollary \ref{CorImprovedBound3}]
By \eqref{EqBasicBB}, we have for every $k\neq j$,
\[
P_k\hat P_j=\frac{1}{\lambda_j-\lambda_k}P_kE\hat P_j+\frac{\lambda_j-\hat\lambda_j}{\lambda_j-\lambda_k}P_k\hat P_j.
\]
Summing over $k\neq j$ yields
\begin{align}
(I-P_j)\hat P_j
&=R_jE\hat P_j+(\lambda_j-\hat\lambda_j)R_j\hat P_j\nonumber\\
&=R_jEP_j\hat P_j+R_jE(I-P_j)\hat P_j+(\lambda_j-\hat\lambda_j)R_j(I-P_j)\hat P_j\label{EqBasicBBVar}.
\end{align}
Applying \eqref{EqBasicBBVar} $(p-1)$-times, we get
\begin{align}
(I-P_j)\hat P_j
&=\sum_{m=1}^{p-1}(R_jE)^mP_j\hat P_j+(R_jE)^{p-1}(I-P_j)\hat P_j\nonumber\\
&+(\lambda_j-\hat\lambda_j)\sum_{m=1}^{p-1}(R_jE)^{m-1}R_j(I-P_j)\hat P_j.\label{EqPartialExp}
\end{align}
By Lemmas \ref{LemEVSeparation} and \ref{LemCoeffBound}, we have
\begin{align}\label{EqHSPartTaylorExp}
&|\lambda_j-\hat\lambda_j|\Vert\sum_{m=1}^{p-1}(R_jE)^{m-1}R_j\Vert_\infty\leq g_j\delta_j\sum_{m=1}^{p-1}g_j^{-1}\delta_j^{m-1}\leq \frac{\delta_j}{1-\delta_j}<1.
\end{align}
Taking the Hilbert-Schmidt norm in \eqref{EqPartialExp} and inserting \eqref{EqHSPartTaylorExp}, we get
\begin{align}\label{EqRemEst}
\Vert(I-P_j)\hat P_j\Vert_2&\leq \frac{1}{1-2\delta_j}\Big(\Vert\sum_{m=1}^{p-1}(R_jE)^mP_j\Vert_2+\Vert(R_jE)^{p-1}(I-P_j)\hat P_j\Vert_2\Big).
\end{align}
Applying the identity $I-P_j=|R_j|^{1/2}|R_j|^{-1/2}$ and Lemma \ref{LemER}, we get
\[
\Vert(R_jE)^{p-1}(I-P_j)\hat P_j\Vert_2\leq \Vert(R_jE)^{p-1}|R_j|^{1/2}\Vert_\infty\Vert|R_j|^{-1/2}\hat P_j\Vert_2\leq \frac{1}{1-2\delta_j}\delta_j'^{p}.
\]
Inserting this into \eqref{EqRemEst} we obtain Corollary \ref{CorImprovedBound3} under the Condition $\delta_j<1/4$. The latter condition can be dropped by proceeding similarly as in the proof of Corollary \ref{CorImprovedBound2}.
\end{proof}
\section{Applications}\label{SecAppli}
In this section, we apply our results to the the empirical covariance operator, a central object in high-dimensional probability and statistics. Additionally, we show how to obtain similar conclusions in the case of kernel operators and kernel Gram matrices.
\subsection{The empirical covariance operator}
Let $X$ be a random variable taking values in $\mathcal{H}$. We suppose that $X$ is centered and strongly square-integrable, meaning that $\mathbb{E} X =0$ and $\mathbb{E}\|X\|^2<\infty$. Let $\Sigma =\mathbb{E} X\otimes X$ be the covariance operator of $X$, which is a positive, self-adjoint trace class operator, see e.g. \cite[Theorem 7.2.5]{HE15}. Let $X_1,\dots,X_n$ be independent copies of $X$ and let
\begin{equation*}
\hat{\Sigma}=\frac{1}{n}\sum_{i=1}^nX_i\otimes X_i
\end{equation*}
be the empirical covariance operator.

\begin{assumption}\label{SubGauss}
Suppose that $X$ is sub-Gaussian, meaning that there is a constant $\SGN$ with
\[
\forall u\in\mathcal{H},\qquad \sup_{q\ge 1}q^{-1/2}\mathbb{E}^{1/q} |\langle X,u\rangle|^{q} \le \SGN\mathbb{E}^{1/2}\langle X,u\rangle^2.
\]
\end{assumption}
For $j\geq 1$, consider  $X'=(|R_j|^{1/2}+g_j^{-1/2}P_j)X$, $X_i'=(|R_j|^{1/2}+g_j^{-1/2}P_j)X_i$ which again satisfy Assumption \ref{SubGauss} (with the same constant $\SGN$) and lead to the covariance and the sample covariance
\begin{align*}\label{EqSigma'}
\Sigma'&=(|R_j|^{1/2}+g_j^{-1/2}P_j)\Sigma (|R_j|^{1/2}+g_j^{-1/2}P_j),\\
 \hat\Sigma'&=(|R_j|^{1/2}+g_j^{-1/2}P_j)\hat\Sigma (|R_j|^{1/2}+g_j^{-1/2}P_j).
 \end{align*}
Thus we have
\[
\delta_j=\Vert(|R_j|^{1/2}+g_j^{-1/2}P_j)E(|R_j|^{1/2}+g_j^{-1/2}P_j)\Vert_\infty=\|\hat\Sigma'-\Sigma'\|_\infty.
\]
This observation allows us to transfer the results from \cite{KL14} to $\delta_j$. First, we state a high probability result for $\delta_j$:
\begin{lemma}\label{KoltLounConc} Under Assumption \ref{SubGauss}, there are constants $c_1,c_2>0$ depending only on $\SGN$ such that for every $j\geq 1$ satisfying
\begin{equation}\label{EqRGCond}
\frac{\lambda_j}{g_j}\Big(\sum_{k\neq j}\frac{\lambda_k}{|\lambda_k-\lambda_j|}+\frac{\lambda_j}{g_j}\Big)\leq c_1n,
\end{equation}
we have
\begin{equation*}
\mathbb{P}\big(\delta_j> 1/4\big)\leq e^{-c_2ng_j^2/\lambda_j^2}.
\end{equation*}
\end{lemma}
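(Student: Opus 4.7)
My plan is to use the identity $\delta_j = \|\hat\Sigma' - \Sigma'\|_\infty$ established immediately before the lemma. This reduces everything to a standard concentration question for the sample covariance operator associated with the centered sub-Gaussian vector $X' = (|R_j|^{1/2} + g_j^{-1/2} P_j)\, X$. Since $|R_j|^{1/2} + g_j^{-1/2} P_j$ is bounded and self-adjoint, the sub-Gaussianity of $X$ transfers directly to $X'$ with the same constant $L$ (for any $u \in \mathcal{H}$, $\langle X', u\rangle = \langle X, (|R_j|^{1/2} + g_j^{-1/2} P_j) u\rangle$).

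With $X_1', \ldots, X_n'$ i.i.d.\ copies of $X'$, I would invoke the Koltchinskii--Lounici concentration inequality for sample covariance operators from \cite{KL14}: there is a constant $C > 0$ depending only on $L$ such that for every $t \geq 1$,
\[
\mathbb{P}\Bigl(\|\hat\Sigma' - \Sigma'\|_\infty \geq C \|\Sigma'\|_\infty \max\bigl(\sqrt{(r + t)/n},\, (r + t)/n\bigr)\Bigr) \leq e^{-t},
\]
where $r = \operatorname{tr}(\Sigma')/\|\Sigma'\|_\infty$ is the effective rank. Because $|R_j|^{1/2}$ and $P_j$ share the eigenbasis $(u_k)$ of $\Sigma$, the operator $\Sigma'$ is diagonal in this basis with entries $\lambda_k/|\lambda_k - \lambda_j|$ for $k \neq j$ and $\lambda_j/g_j$ at position $j$. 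Using the universal inequality $g_j \leq \lambda_j$, I would derive $\|\Sigma'\|_\infty \leq 2\lambda_j/g_j$ and
\[
\operatorname{tr}(\Sigma') = \sum_{k \neq j} \frac{\lambda_k}{|\lambda_k - \lambda_j|} + \frac{\lambda_j}{g_j},
\]
so that $\|\Sigma'\|_\infty \operatorname{tr}(\Sigma')$ is bounded by twice the left-hand side of \eqref{EqRGCond}.

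The last step is to match constants. I would choose $c_1$ small enough that \eqref{EqRGCond} forces the ``deterministic'' contribution $C\|\Sigma'\|_\infty(\sqrt{r/n} + r/n) = C(\sqrt{\|\Sigma'\|_\infty \operatorname{tr}(\Sigma')/n} + \|\Sigma'\|_\infty \operatorname{tr}(\Sigma')/n)$ to be at most $1/8$. Then I would set $t = c_2\, n g_j^2/\lambda_j^2$, which yields $\|\Sigma'\|_\infty \sqrt{t/n} \leq 2\sqrt{c_2}$ and $\|\Sigma'\|_\infty\, t/n \leq 2 c_2 g_j/\lambda_j \leq 2 c_2$ (again using $g_j \leq \lambda_j$). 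Picking $c_2$ small enough in terms of $L$ so that the ``tail'' contribution is also at most $1/8$ gives $\|\hat\Sigma' - \Sigma'\|_\infty \leq 1/4$ except on an event of probability at most $e^{-c_2 n g_j^2/\lambda_j^2}$, which is exactly the claim.

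The main obstacle is not conceptual but bookkeeping: one has to split the single threshold $1/4$ between the rank part (absorbed into the choice of $c_1$) and the tail part (absorbed into $c_2$), while ensuring both are expressed through the \emph{single} quantity on the left-hand side of \eqref{EqRGCond} and the \emph{single} exponent $n g_j^2/\lambda_j^2$ on the right. The identification $\|\Sigma'\|_\infty \operatorname{tr}(\Sigma') \asymp (\lambda_j/g_j)(\sum_{k\neq j} \lambda_k/|\lambda_k - \lambda_j| + \lambda_j/g_j)$ is what makes this work, and is the only place where the specific weighting $|R_j|^{1/2} + g_j^{-1/2} P_j$ really enters.
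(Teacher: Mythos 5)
Your proposal is correct and is exactly the argument the paper intends: the paper gives no separate proof of this lemma, relying precisely on the identity $\delta_j=\|\hat\Sigma'-\Sigma'\|_\infty$, the preservation of Assumption \ref{SubGauss} under the map $X\mapsto(|R_j|^{1/2}+g_j^{-1/2}P_j)X$, and the Koltchinskii--Lounici concentration bound with $\|\Sigma'\|_\infty\le 2\lambda_j/g_j$ and $\operatorname{tr}(\Sigma')=\sum_{k\neq j}\lambda_k/|\lambda_k-\lambda_j|+\lambda_j/g_j$. Your bookkeeping (splitting $1/4$ between the effective-rank part controlled by $c_1$ and the tail part controlled by $t=c_2ng_j^2/\lambda_j^2$) matches what is needed; the only minor point worth noting is that when $c_2ng_j^2/\lambda_j^2<1$ one should take $t=1$ in the concentration inequality, which still yields the stated bound.
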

Second, in order to bound the remainder term in Theorems \ref{ThmTaylorSP} and \ref{ThmTaylorEV}, we will apply the following moment bound:
\begin{lemma}\label{KoltLounExp} Suppose that Assumption \ref{SubGauss} holds. Then, for every  $p\geq 1$, there is a constant $C_1>1$ depending only on $\SGN$ and $p$ such that for every $j\geq 1$,
\begin{align*}
& \mathbb{E}^{1/p}\delta_j^p\leq C_1\max\bigg(\sqrt{\frac{1}{n}\frac{\lambda_j}{g_j}\Big(\sum_{k\neq j}\frac{\lambda_k}{|\lambda_k-\lambda_j|}+\frac{\lambda_j}{g_j}\Big)}, \frac{1}{n}\Big(\sum_{k\neq j}\frac{\lambda_k}{|\lambda_k-\lambda_j|}+\frac{\lambda_j}{g_j}\Big)\bigg).
\end{align*}
If additionally \eqref{EqRGCond} holds, then we have
\begin{align*}
\mathbb{E}^{1/p}\delta_j^p\leq C_1\sqrt{\frac{1}{n}\frac{\lambda_j}{g_j}\Big(\sum_{k\neq j}
\frac{\lambda_k}{|\lambda_k-\lambda_j|}+\frac{\lambda_j}{g_j}\Big)}.
\end{align*}
\end{lemma}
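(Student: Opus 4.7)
The plan is to reduce the lemma to the sub-Gaussian moment bound for the empirical covariance operator norm from Koltchinskii and Lounici~\cite{KL14}, applied to the symmetrically weighted sample used to define $\delta_j$. The starting point is the identity $\delta_j = \|\hat\Sigma' - \Sigma'\|_\infty$ derived in the paragraph preceding Lemma~\ref{KoltLounConc}. First I would verify that $X' = (|R_j|^{1/2} + g_j^{-1/2}P_j)X$ inherits Assumption~\ref{SubGauss} with the same constant $\SGN$: since $|R_j|^{1/2} + g_j^{-1/2}P_j$ is self-adjoint, $\langle X', u\rangle = \langle X, (|R_j|^{1/2} + g_j^{-1/2}P_j)u\rangle$, and the sub-Gaussian inequality at $(|R_j|^{1/2}+g_j^{-1/2}P_j)u$ transfers to $u$.

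Next I would compute the spectral parameters of $\Sigma'$ explicitly. Using $|R_j|^{1/2}P_j = 0$ and $\Sigma P_j = \lambda_j P_j$, the cross terms in the expansion of $\Sigma'$ vanish, yielding
\begin{equation*}
\Sigma' = \sum_{k\neq j}\frac{\lambda_k}{|\lambda_k-\lambda_j|}P_k + \frac{\lambda_j}{g_j}P_j,
\end{equation*}
so that $\operatorname{tr}(\Sigma') = \sum_{k\neq j}\lambda_k/|\lambda_k-\lambda_j| + \lambda_j/g_j$. A short case analysis---splitting $k=j\pm 1$, $k>j+1$ (where $\lambda_j-\lambda_k\geq g_j$ and $\lambda_k\leq\lambda_{j+1}\leq\lambda_j$), and $k<j-1$ (where $\lambda_k-\lambda_j\geq g_j$, so $\lambda_k/(\lambda_k-\lambda_j) = 1+\lambda_j/(\lambda_k-\lambda_j)\leq 1+\lambda_j/g_j$)---yields $\|\Sigma'\|_\infty \leq C\lambda_j/g_j$ for a numerical constant $C$.

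The core step is to apply the Koltchinskii--Lounici moment bound for sub-Gaussian covariance estimation (\cite[Theorem~9]{KL14} in its $L^p$-form), giving, for every $p\geq 1$,
\begin{equation*}
\mathbb{E}^{1/p}\|\hat\Sigma'-\Sigma'\|_\infty^p \leq C_p\|\Sigma'\|_\infty \max\!\Big(\sqrt{r(\Sigma')/n},\, r(\Sigma')/n\Big),
\end{equation*}
where $r(\Sigma')=\operatorname{tr}(\Sigma')/\|\Sigma'\|_\infty$ is the effective rank and $C_p$ depends only on $\SGN$ and $p$. Substituting the explicit $\operatorname{tr}(\Sigma')$ together with $\|\Sigma'\|_\infty\lesssim \lambda_j/g_j$ proves the first assertion, because $\|\Sigma'\|_\infty\sqrt{r(\Sigma')/n} = \sqrt{\|\Sigma'\|_\infty\operatorname{tr}(\Sigma')/n}$ and $\|\Sigma'\|_\infty\cdot r(\Sigma')/n = \operatorname{tr}(\Sigma')/n$.

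For the second assertion, condition~\eqref{EqRGCond} is, up to the constant absorbed in $\|\Sigma'\|_\infty\lesssim\lambda_j/g_j$, precisely the inequality $\|\Sigma'\|_\infty\operatorname{tr}(\Sigma')\leq c_1 n$. This forces $r(\Sigma')/n\leq c_1/\|\Sigma'\|_\infty^2$, and hence
\begin{equation*}
\|\Sigma'\|_\infty\cdot r(\Sigma')/n \leq \sqrt{c_1}\,\sqrt{\|\Sigma'\|_\infty\operatorname{tr}(\Sigma')/n},
\end{equation*}
so the linear-in-$r/n$ term is dominated by the square-root term in the max, and only the latter survives. The main obstacle, if any, is locating the correct $L^p$-moment version of the KL bound; the extension from the $p=1$ and high-probability versions in \cite{KL14} to general $p$ follows by standard integration of the tail, so once the identity $\delta_j=\|\hat\Sigma'-\Sigma'\|_\infty$ is in hand the rest of the proof is essentially bookkeeping.
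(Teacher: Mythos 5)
Your proposal is correct and is precisely the argument the paper intends: the lemma is stated as a direct transfer of the Koltchinskii--Lounici sub-Gaussian moment bound via the identity $\delta_j=\|\hat\Sigma'-\Sigma'\|_\infty$, and your computation of $\operatorname{tr}(\Sigma')$, the bound $\|\Sigma'\|_\infty\leq C\lambda_j/g_j$, and the reduction of \eqref{EqRGCond} to $r(\Sigma')/n\lesssim 1$ are exactly the needed bookkeeping. No gaps.
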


\subsection{Empirical eigenvalues}
In this section, we will apply Corollary \ref{PertBoundEV} and Theorem \ref{ThmTaylorEV} to the eigenvalues of the empirical covariance operator.
\begin{thm}\label{CorEEVUpperBound}
If Assumption \ref{SubGauss} holds, then there are constants $c_1,c_2,C_1>0$ depending only on $\SGN$ such that for all $j\geq 1$ satisfying \eqref{EqRGCond},
\begin{align}
\mathbb{E}^{1/2}(\hat\lambda_j-\lambda_j)^2
&\leq C_1\frac{\lambda_j}{\sqrt{n}}\Big(1+ \frac{1}{\sqrt{n}}\sum_{k\neq j}\frac{\lambda_k}{|\lambda_k-\lambda_j|}\Big)\nonumber\\
&\quad+C_1e^{-c_2ng_j^2/\lambda_j^2}\Big(\sqrt{\frac{\lambda_1\operatorname{tr}(\Sigma)}{n}}+ \frac{\operatorname{tr}(\Sigma)}{n}\Big).\label{EqEEVUpperBound}
\end{align}
Moreover, if $X$ is Gaussian, then there are absolute constants $c_1,C_1>0$ such that for all $j\geq 1$ satisfying \eqref{EqRGCond},
\begin{align}
\mathbb{E}^{1/2}(\hat\lambda_j-\lambda_j)^2
&\geq C_1^{-1}\frac{\lambda_j}{\sqrt{n}}\Big(1+ \frac{1}{\sqrt{n}}\Big|\sum_{k\neq j}\frac{\lambda_k}{\lambda_k-\lambda_j}\Big|\Big)\nonumber\\
&\quad-C_1\frac{\lambda_j}{n}\sum_{k\neq j}\frac{\lambda_k}{|\lambda_k-\lambda_j|}\sqrt{\frac{1}{n}\frac{\lambda_j}{g_j}\Big(\sum_{k\neq j}\frac{\lambda_k}{|\lambda_k-\lambda_j|}+\frac{\lambda_j}{g_j}\Big)}.\label{EqEEVLowerBound}
\end{align}
\end{thm}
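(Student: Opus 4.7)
I would split into the event $A=\{\delta_j\leq 1/4\}$ and its complement. On $A$, Corollary~\ref{PertBoundEV} yields
\[
|\hat\lambda_j-\lambda_j|\leq \|P_jEP_j\|_2+C\|P_jE|R_j|^{1/2}\|_2^2,
\]
so by Minkowski
\[
\bigl(\mathbb{E}[(\hat\lambda_j-\lambda_j)^2\mathbf{1}_A]\bigr)^{1/2}\leq \mathbb{E}^{1/2}\|P_jEP_j\|_2^2+C\,\mathbb{E}^{1/2}\|P_jE|R_j|^{1/2}\|_2^4.
\]
Since $\|P_jEP_j\|_2=|n^{-1}\sum_i\langle X_i,u_j\rangle^2-\lambda_j|$, sub-Gaussianity of $X$ gives the first moment $\lesssim \lambda_j/\sqrt{n}$. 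For the second, I expand
\[
\|P_jE|R_j|^{1/2}\|_2^2=\sum_{k\neq j}\frac{\langle Eu_j,u_k\rangle^2}{|\lambda_k-\lambda_j|}
\]
and apply a Hanson--Wright / sub-Gaussian chaos bound, using that each $\mathbb{E}\langle Eu_j,u_k\rangle^2\lesssim \lambda_j\lambda_k/n$; the resulting fourth-moment estimate produces the target $\lambda_j n^{-1}\sum_{k\neq j}\lambda_k/|\lambda_k-\lambda_j|$. On $A^c$ I would use the Weyl bound $|\hat\lambda_j-\lambda_j|\leq \|E\|_\infty$ together with Cauchy--Schwarz, the exponential tail $\mathbb{P}(A^c)\leq \mathrm{e}^{-c_2ng_j^2/\lambda_j^2}$ from Lemma~\ref{KoltLounConc}, and the standard Koltchinskii--Lounici fourth-moment bound $\mathbb{E}^{1/4}\|E\|_\infty^4\lesssim \sqrt{\lambda_1\operatorname{tr}(\Sigma)/n}+\operatorname{tr}(\Sigma)/n$; this yields the second line of \eqref{EqEEVUpperBound}.

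\textbf{Lower bound (Gaussian case).} Theorem~\ref{ThmTaylorEV} with $p=3$ gives, on $\{\delta_j<1/2\}$,
\[
|\hat\lambda_j-\lambda_j-\lambda_j^{(1)}-\lambda_j^{(2)}|\leq C\,\|P_jE|R_j|^{1/2}\|_2^2\,\delta_j'=:|R|,
\]
where $\lambda_j^{(1)}=n^{-1}\sum_i(\langle X_i,u_j\rangle^2-\lambda_j)$ is centred with variance $2\lambda_j^2/n$, and by Gaussian independence of coordinates
\[
\mathbb{E}\lambda_j^{(2)}=-\frac{\lambda_j}{n}\sum_{k\neq j}\frac{\lambda_k}{\lambda_k-\lambda_j}.
\]
I combine two elementary lower bounds. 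First, by Jensen, $\mathbb{E}^{1/2}(\hat\lambda_j-\lambda_j)^2\geq |\mathbb{E}(\hat\lambda_j-\lambda_j)|\geq |\mathbb{E}\lambda_j^{(2)}|-|\mathbb{E} R|$, which delivers the bias term. Second, $\mathbb{E}(\hat\lambda_j-\lambda_j)^2\geq \operatorname{Var}(\hat\lambda_j)\geq \tfrac12\operatorname{Var}(\lambda_j^{(1)})-\operatorname{Var}(\lambda_j^{(2)}+R)$ delivers the $\lambda_j/\sqrt n$ term, since $\operatorname{Var}(\lambda_j^{(1)})=2\lambda_j^2/n$. Taking the maximum of the two (which is at least their average) produces both positive contributions in \eqref{EqEEVLowerBound}; the error contributions $\mathbb{E}|R|$, $\mathbb{E} R^2$, and $\operatorname{Var}(\lambda_j^{(2)}+R)$ are controlled via Cauchy--Schwarz combined with Lemma~\ref{KoltLounExp} applied to $\delta_j'$, which supplies exactly the factor $\sqrt{n^{-1}\lambda_j g_j^{-1}\bigl(\sum_{k\neq j}\lambda_k/|\lambda_k-\lambda_j|+\lambda_j/g_j\bigr)}$ in the subtracted term.

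\textbf{Main obstacle.} The principal technicality is that the Taylor remainder bound of Theorem~\ref{ThmTaylorEV} is only valid on $\{\delta_j<1/2\}$, so one must truncate and use Cauchy--Schwarz together with the exponential concentration of Lemma~\ref{KoltLounConc} on the complementary event; this is the same device that isolates the $\mathrm{e}^{-c_2ng_j^2/\lambda_j^2}$ term in the upper bound. A second delicate point for the lower bound is verifying that $\operatorname{Var}(\lambda_j^{(2)})$ is dominated by $\tfrac12\operatorname{Var}(\lambda_j^{(1)})$ up to an error of the allowed order, which requires a direct Gaussian chaos computation for the quadratic form $\sum_{k\neq j}\langle Eu_j,u_k\rangle^2/(\lambda_k-\lambda_j)$ and exploits the same second-moment bound on $\|P_jE|R_j|^{1/2}\|_2^2$ already established for the upper bound.
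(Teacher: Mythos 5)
Your upper bound argument is essentially identical to the paper's: split on $\{\delta_j\le 1/4\}$, apply Corollary~\ref{PertBoundEV} plus Minkowski on the good event, and Weyl plus Cauchy--Schwarz, Lemma~\ref{KoltLounConc} and the Koltchinskii--Lounici moment bound for $\|E\|_\infty$ on the bad event; the moment estimates $\mathbb{E}^{1/2}\|P_jEP_j\|_2^2\lesssim\lambda_j/\sqrt n$ and $\mathbb{E}^{1/2}\|P_jE|R_j|^{1/2}\|_2^4\lesssim \lambda_j n^{-1}\sum_{k\neq j}\lambda_k/|\lambda_k-\lambda_j|$ are exactly those used in the paper.

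For the lower bound you take a genuinely different route. The paper writes $\mathbb{E}^{1/2}(\hat\lambda_j-\lambda_j)^2\ge I_1-I_2-I_3$ with $I_1=\mathbb{E}^{1/2}(\lambda_j^{(1)}+\lambda_j^{(2)})^2$, computes $I_1^2$ \emph{exactly} as a Gaussian chaos (it equals $\tfrac{\lambda_j^2}{n}\bigl(2+\tfrac{n+2}{n^2}S^2+\tfrac{6}{n^2}\sum_{k\neq j}\tfrac{\lambda_k^2}{(\lambda_k-\lambda_j)^2}+\tfrac{2}{n}S\bigr)$ with $S=\sum_{k\neq j}\lambda_k/(\lambda_k-\lambda_j)$), and extracts both positive contributions from this single quantity via the elementary inequalities $x^2-2x\ge x^2/3-3/2$ and $\sqrt{1+x}\ge(1+\sqrt x)/2$; the possibly negative cross term $\tfrac{2}{n}S$ is absorbed there. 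You instead separate bias (Jensen) and fluctuation (a variance comparison), which is more elementary but needs two extra verifications. First, as you note, you must check $\operatorname{Var}(\lambda_j^{(2)})\le\tfrac14\operatorname{Var}(\lambda_j^{(1)})$; this does hold under \eqref{EqRGCond} with $c_1$ small, since $\operatorname{Var}(\lambda_j^{(2)})\lesssim \tfrac{\lambda_j^2}{n^2}\sum_{k\neq j}\tfrac{\lambda_k^2}{(\lambda_k-\lambda_j)^2}+\tfrac{\lambda_j^2}{n^3}S^2\lesssim c_1\lambda_j^2/n$. Second, and this is the one point you should make explicit: in the regime $\sum_{k\neq j}\lambda_k/|\lambda_k-\lambda_j|\gg\sqrt n$ (still allowed by \eqref{EqRGCond}), one has $\mathbb{E}R^2\gg\operatorname{Var}(\lambda_j^{(1)})$, so $\tfrac12\operatorname{Var}(\lambda_j^{(1)})-\operatorname{Var}(\lambda_j^{(2)}+R)$ is negative and your variance route, as written, returns nothing. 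The fix is to use $\sqrt{(a-b)_+}\ge\sqrt a-\sqrt b$, which yields $c\lambda_j/\sqrt n-C\,\mathbb{E}^{1/2}R^2$, and $\mathbb{E}^{1/2}R^2$ is (by Cauchy--Schwarz and Lemma~\ref{KoltLounExp}) precisely of the form of the allowed subtracted term in \eqref{EqEEVLowerBound}; then averaging your two routes gives the claim. Finally, since \eqref{EqEEVLowerBound} carries no exponential remainder, the truncation errors from $\{\delta_j>1/4\}$ must be absorbed into the subtracted term (as the paper does by noting $e^{-c_2ng_j^2/\lambda_j^2}\lambda_j/\sqrt n$ is dominated by it), rather than added as a separate term as in the upper bound. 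With these adjustments your argument is correct.
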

\begin{remark}
Inequalities \eqref{EqEEVUpperBound} and \eqref{EqEEVLowerBound} provide matching upper and lower bounds, provided that the absolute value of $\sum_{k\neq j}\lambda_k/(\lambda_k-\lambda_j)$ is comparable to the sum of the absolute values $\sum_{k\neq j}\lambda_k/|\lambda_k-\lambda_j|$, as can be seen by inserting \eqref{EqRGCond} into the remainder term in \eqref{EqEEVLowerBound}. For instance, in the case $j=1$, all terms in the sum $\sum_{k>1}\lambda_k/(\lambda_1-\lambda_k)$ are positive, meaning that we indeed obtain matching upper and lower bounds (up to an exponentially small remainder term).
\end{remark}
\begin{remark}
Theorem \ref{CorEEVUpperBound} reveals a phase transition. First, if
\begin{equation}\label{EqRelRankCond}
\sum_{k\neq j}\frac{\lambda_k}{|\lambda_k-\lambda_j|}+\frac{\lambda_j}{g_j}\leq \sqrt{c_1n}
\end{equation}
holds (which implies \eqref{EqRGCond}), then the $L^2$-norm of the linear perturbation term dominates the bound. On the other hand, if \eqref{EqRGCond} holds, but \eqref{EqRelRankCond} does not hold, then the second order perturbation term dominates the bound; see Section \ref{SecRRC} for more discussion.
\end{remark}
\begin{remark}\label{RemEVSep}
The second order perturbation term in the upper bound \eqref{EqEEVUpperBound} can also be written as
\[
C_1g_j\cdot \frac{\lambda_j}{g_j}\sum_{k\neq j}\frac{\lambda_k}{|\lambda_k-\lambda_j|}.
\]
Hence, as long as \eqref{EqRGCond} is satisfied with $c_1$ small enough (e.g. such that the above term is bounded by $g_j/2$), we have an eigenvalue separation property. We conjecture that a reverse inequality holds if \eqref{EqRGCond} does not hold, in which case we could not even cluster empirical and population eigenvalues (and eigenprojections) appropriately.
\end{remark}

\begin{proof}[Proof of Theorem \ref{CorEEVUpperBound}]
By Minkowski's inequality, we have
\[
\mathbb{E}^{1/2}(\hat\lambda_j-\lambda_j)^2\leq \mathbb{E}^{1/2}\mathbf{1}(\delta_j\leq 1/4)(\hat\lambda_j-\lambda_j)^2+\mathbb{E}^{1/2}\mathbf{1}(\delta_j> 1/4)(\hat\lambda_j-\lambda_j)^2.
\]
Applying Corollary \ref{PertBoundEV} and Minkowski's inequality to the first term, and the Weyl
bound $|\hat\lambda_j-\lambda_j|\leq \|E\|_\infty$ and the Cauchy-Schwarz inequality to the second
one, we obtain that
\begin{align}
\mathbb{E}^{1/2}(\hat\lambda_j-\lambda_j)^2&\leq \mathbb{E}^{1/2}\|P_jEP_j\|_2^2+C\mathbb{E}^{1/2}\Vert P_jE|R_j|^{1/2}\Vert_2^4\nonumber\\
&+(\mathbb{E}\|E\|^4_\infty)^{1/4}(\mathbb{P}(\delta_j>1/4))^{1/4}.\label{EqEEVUpperBoundPr}
\end{align}
By Assumption \ref{SubGauss} and Minkowski's inequality, we have
\[
\mathbb{E}^{1/2}\|P_jEP_j\|_2^2\leq C\frac{\lambda_j}{\sqrt{n}},\qquad\mathbb{E}^{1/2}\Vert P_jE|R_j|^{1/2}\Vert_2^4\leq C\frac{\lambda_j}{n}\sum_{k\neq j}\frac{\lambda_k}{|\lambda_j-\lambda_k|}
\]
with a constant $C>0$ depending only on $\SGN$. Inserting these inequalities, Lemma \ref{KoltLounExp} and [13, Corollary 2] into \eqref{EqEEVUpperBoundPr}, \eqref{EqEEVUpperBound} follows.

Next, by Minkowski's inequality, we have
\begin{align}
&\mathbb{E}^{1/2}(\hat\lambda_j-\lambda_j)^2\nonumber\\
&\geq (\mathbb{E}(\operatorname{tr}(P_jEP_j)-\operatorname{tr}(P_jER_jEP_j))^2)^{1/2}\nonumber\\
&-(\mathbb{E}\mathbf{1}(\delta_j\leq 1/4)(\hat\lambda_j-\lambda_j-\operatorname{tr}(P_jEP_j)+\operatorname{tr}(P_jER_jEP_j))^2)^{1/2}\nonumber\\
&-(\mathbb{E}\mathbf{1}(\delta_j>  1/4)(\operatorname{tr}(P_jEP_j)-\operatorname{tr}(P_jER_jEP_j))^2)^{1/2}=:I_1-I_2-I_3.\label{EqLowBound1}
\end{align}
By a simple moment computation for Gaussian chaos, we have
\begin{align*}
I_1^2&=\frac{\lambda_j^2}{n}\Big(2+\frac{n+2}{n^2}\Big(\sum_{k\neq j}\frac{\lambda_k}{\lambda_k-\lambda_j}\Big)^2+\frac{6}{n^2}\sum_{k\neq j}\frac{\lambda_k^2}{(\lambda_k-\lambda_j)^2}+\frac{2}{n}\sum_{k\neq j}\frac{\lambda_k}{\lambda_k-\lambda_j}\Big).
\end{align*}
Using the inequality $x^2-2x\geq x^2/3-3/2$, $x\geq 0$, we get
\begin{align*}
I_1^2\geq \frac{\lambda_j^2}{n}\Big(2-\frac{3}{2n}+\frac{1}{3n}\Big(\sum_{k\neq j}\frac{\lambda_k}{\lambda_k-\lambda_j}\Big)^2\Big),
\end{align*}
and thus, using $\sqrt{1+x}\geq (1+\sqrt{x})/2$, $x\geq 0$,
\begin{align*}
I_1\geq \frac{1}{6}\frac{\lambda_j}{\sqrt{n}}\Big(1+ \frac{1}{\sqrt{n}}\Big|\sum_{k\neq j}\frac{\lambda_k}{\lambda_k-\lambda_j}\Big|\Big).
\end{align*}
Similarly, by Theorem \ref{ThmTaylorEV} applied with $q=3$, the Cauchy-Schwarz inequality, the Minkowski inequality, and the inequality $|\operatorname{tr}(P_jER_jEP_j)|\leq \Vert P_jE|R_j|^{1/2}\Vert_2^2$ we have
\begin{align*}
&I_2+I_3\leq C\mathbb{E}^{1/4}\Vert P_jE|R_j|^{1/2}\Vert_2^8(\mathbb{E}\delta_j^4)^{1/4}+ \mathbb{E}^{1/4}\operatorname{tr}(P_jEP_j)^4\mathbb{P}^{1/4}(\delta_j\geq 1/4).
\end{align*}
By a moment computation and Lemmas \ref{KoltLounExp} and \ref{KoltLounConc}, we get
\begin{align*}
&I_2+I_3\leq C\frac{1}{n}\sum_{k\neq j}\frac{\lambda_j\lambda_k}{|\lambda_k-\lambda_j|}\sqrt{\frac{1}{n}\frac{\lambda_j}{g_j}\Big(\sum_{k\neq j}\frac{\lambda_k}{|\lambda_k-\lambda_j|}+\frac{\lambda_j}{g_j}\Big)}+Ce^{-c_2ng_j^2/\lambda_j^2}\frac{\lambda_j}{\sqrt{n}},
\end{align*}
where the second term is bounded by the first term. Now, inequality \eqref{EqEEVLowerBound} follows from the upper and lower bounds for $I_1$ and $I_2+I_3$  into \eqref{EqLowBound1}.
\end{proof}

\subsection{Empirical eigenprojections}
In this section, we will apply Theorem \ref{ThmTaylorSP} to the eigenprojections $\hat P_j$ of the empirical covariance operator $\hat\Sigma$. For this, we will extend Assumption \ref{SubGauss} slightly in order to be able to efficiently compute moments of polynomials chaos.
\begin{assumption}\label{SubGauss2}
For $j\geq 1$, let $\eta_j=\lambda_j^{-1/2}\langle X,u_j\rangle$ be the $j$-th Karhunen-Loève coefficient of $X$. Suppose that the $\eta_1,\eta_2,\dots$ are independent, symmetric, and sub-Gaussian, the latter meaning that there is a constant $\SGN$ such that
\[
\sup_{j\geq 1}\sup_{q\ge 1}q^{-1/2}\mathbb{E}^{1/q} |\eta_j|^{q} \le \SGN.
\]
\end{assumption}
Assumption \ref{SubGauss2} indeed implies Assumption \ref{SubGauss}, cf. \cite{V12}.

\begin{thm}\label{CorEEPBound}
Suppose that Assumption \ref{SubGauss2} holds. Then, for every $p\geq 1$, there are constants $c_1,C_1>0$ depending only on $\SGN$ and $p$ such that for every $j\geq 1$ satisfying \eqref{EqRGCond},
\begin{align*}
&\mathbb{E}^{1/2}\|\hat P_j-P_j\|_2^2\leq C_1\sqrt{\frac{1}{n}\sum_{k\neq j}\frac{\lambda_j\lambda_k}{(\lambda_k-\lambda_j)^2}}+C_1\Big(\frac{1}{n}\frac{\lambda_j}{g_j}\Big(\sum_{k\neq j}
\frac{\lambda_k}{|\lambda_k-\lambda_j|}+\frac{\lambda_j}{g_j}\Big)\Big)^{p/2}.
\end{align*}
\end{thm}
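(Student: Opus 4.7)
Set $\alpha:=\frac{1}{n}\frac{\lambda_j}{g_j}\bigl(\sum_{k\neq j}\frac{\lambda_k}{|\lambda_k-\lambda_j|}+\frac{\lambda_j}{g_j}\bigr)$. My plan is to combine Corollary~\ref{CorImprovedBound3} with the concentration and moment bounds from Lemmas~\ref{KoltLounConc} and~\ref{KoltLounExp}, complemented by an explicit Karhunen--Lo\`eve computation for the leading term. First I would split the expectation according to the event $\{\delta_j\leq 1/4\}$: on its complement, the deterministic bound $\|\hat P_j-P_j\|_2\leq\sqrt 2$, Cauchy--Schwarz, Lemma~\ref{KoltLounConc} (valid under~\eqref{EqRGCond}), and Lemma~\ref{KoltLounExp} with a sufficiently large moment exponent jointly yield a contribution bounded by $C_p\alpha^{p/2}$.

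On the favorable event, Corollary~\ref{CorImprovedBound3} with parameter $p$ gives
\[
\|\hat P_j-P_j\|_2\leq C\Bigl\|\sum_{m=1}^{p-1}(R_jE)^m P_j\Bigr\|_2+C(\delta_j')^p,
\]
and the second term is controlled by $\mathbb{E}^{1/2}(\delta_j')^{2p}\leq C_p\alpha^{p/2}$ via Lemma~\ref{KoltLounExp}. Isolating $m=1$, a direct Karhunen--Lo\`eve computation under Assumption~\ref{SubGauss2} produces the identity $\mathbb{E}\|R_jEP_j\|_2^2=\frac{1}{n}\sum_{k\neq j}\frac{\lambda_j\lambda_k}{(\lambda_k-\lambda_j)^2}$, which gives the first claimed term. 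For $m\geq 2$, the factorization $R_j=J|R_j|^{1/2}|R_j|^{1/2}$ (with the sign operator $J$ commuting with $|R_j|^{1/2}$) yields the deterministic bound $\|(R_jE)^mP_j\|_2\leq g_j^{-1/2}(\delta_j')^{m-1}\|P_jE|R_j|^{1/2}\|_2$, so that H\"older's inequality combined with the polynomial chaos moment equivalence under Assumption~\ref{SubGauss2}, the estimate $\mathbb{E}\|P_jE|R_j|^{1/2}\|_2^2\leq g_j\alpha$, and Lemma~\ref{KoltLounExp} give an $L^2$ bound of order $C_p\alpha^{m/2}$ for each individual term.

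The main obstacle is that a naive Minkowski summation of these individual $L^2$ bounds produces only $C_p\alpha$ rather than the sharper $C_p\alpha^{p/2}$. To bridge the gap I would expand $\mathbb{E}\|\sum_{m=1}^{p-1}(R_jE)^mP_j\|_2^2$ as a double sum of cross-products $\sum_{m,m'}\mathbb{E}\langle (R_jE)^mP_j,(R_jE)^{m'}P_j\rangle_{\mathrm{HS}}$ and exploit the explicit Karhunen--Lo\`eve structure together with the mean-zero and independence properties of the $\eta_{ik}$ to identify cancellations between contributions of different orders, showing that the aggregate above the diagonal $(1,1)$ term is bounded by $C_p\alpha^p$. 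After taking the square root and combining with the Taylor remainder and the bad-event contribution, this yields the claimed bound.
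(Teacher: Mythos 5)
Your overall skeleton (Taylor-type bound from Corollary \ref{CorImprovedBound2}/\ref{CorImprovedBound3}, Lemma \ref{KoltLounExp} for the remainder $\delta_j'^{\,p}$, and an explicit Karhunen--Lo\`eve computation for the leading term) matches the paper, and your identity $\mathbb{E}\|R_jEP_j\|_2^2=\frac{1}{n}\sum_{k\neq j}\frac{\lambda_j\lambda_k}{(\lambda_k-\lambda_j)^2}$ is correct. You also correctly diagnose that the deterministic bound $\|(R_jE)^mP_j\|_2\leq g_j^{-1/2}\delta_j'^{\,m-1}\|P_jE|R_j|^{1/2}\|_2$ followed by H\"older is too lossy. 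But your proposed repair does not work, and it replaces the actual technical heart of the proof with a mechanism that is both mis-targeted and false. The correct statement, which the paper proves as \eqref{EqBoundPolChaos}, is that \emph{each individual} term satisfies
\begin{equation*}
\mathbb{E}\|(R_jE)^mP_j\|_2^2\leq C_m\,\frac{1}{n}\sum_{k_1\neq j}\frac{\lambda_j\lambda_{k_1}}{(\lambda_{k_1}-\lambda_j)^2}\,\Big(\frac{1}{n}\frac{\lambda_j}{g_j}\sum_{k\neq j}\frac{\lambda_k}{|\lambda_k-\lambda_j|}\Big)^{m-1},
\end{equation*}
i.e.\ the small prefactor $\frac{1}{n}\sum_k\frac{\lambda_j\lambda_k}{(\lambda_k-\lambda_j)^2}$ (which can be much smaller than your $\alpha$) is \emph{retained} for every $m$, and under \eqref{EqRGCond} the extra factor is at most $c_1^{m-1}$, so the whole sum over $m$ is absorbed into the first term of the theorem. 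No cancellation between different orders $m$ is involved; each diagonal term is already small enough.

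Your substitute target --- that the aggregate of all contributions beyond the $(1,1)$ diagonal term in $\mathbb{E}\|\sum_m(R_jE)^mP_j\|_2^2$ is bounded by $C_p\alpha^p$ --- is false in general. Take for instance exponentially decaying eigenvalues with $j\asymp n^{3/4}$ and $p=4$: then $\alpha\asymp n^{-1/4}$, $\alpha^{p}\asymp n^{-1}$, while already the diagonal term $\mathbb{E}\|(R_jE)^2P_j\|_2^2$ is of order $\frac{1}{n}\cdot\alpha\asymp n^{-5/4}$ only because of the sharp bound above; your H\"older bound gives $\alpha^2\asymp n^{-1/2}\gg\alpha^p$, and this quantity is a fixed second moment that cannot be made $O(\alpha^p)$ for arbitrary $p$ by exploiting cross-term cancellations. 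What is actually needed, and what your proposal omits entirely, is the combinatorial polynomial-chaos estimate: expanding $\mathbb{E}\|(R_jE)^mP_j\|_2^2$ in the Karhunen--Lo\`eve coordinates, bounding the number of index tuples $\mathbf{i}\in\{1,\dots,n\}^{2m-1}$ with nonvanishing expectation both by $Cn^m$ and by $Cn^{2m-l}$ (where $l$ is the number of distinct spectral labels, via the boundary-point/partition induction), and then controlling the resulting eigenvalue sums using \eqref{EqRGCond}. This is the bulk of the paper's proof and cannot be bypassed by the argument you sketch.
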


\begin{remark}
It is also possible to derive corresponding lower bounds by combining Theorem \ref{ThmTaylorSP} with the reverse triangular inequality.
\end{remark}

\begin{proof}
By Corollary \ref{CorImprovedBound2} and the Minkowski inequality, there is a constant $C>0$ depending only on $p$ such that
\begin{align*}
\mathbb{E}^{1/2}\|\hat P_j-P_j\|_2^2\leq C\sum_{m=1}^{p-1}\mathbb{E}^{1/2}\|(R_jE)^mP_j\|_2^2+C\mathbb{E}^{1/2}\delta_j^{2p}.
\end{align*}
Applying Lemma \ref{KoltLounExp} to the remainder term gives the remainder term in Theorem \ref{CorEEPBound}. Hence, the claim follows if we can show that for each $m\geq 1$,
\begin{align}\label{EqBoundPolChaos}
\mathbb{E}\|(R_jE)^mP_j\|_2^2\leq C_1\frac{1}{n}\sum_{k_1\neq j}\frac{\lambda_j\lambda_{k_1}}{(\lambda_{k_1}-\lambda_j)^2}\bigg(\frac{1}{n}\frac{\lambda_j}{g_j}\sum_{k\neq j}
\frac{\lambda_k}{|\lambda_k-\lambda_j|}\bigg)^{m-1}
\end{align}
for some constant $C_1>0$ depending only on $m$, provided that \eqref{EqRGCond} holds. From these inequalities, the claim follows from inserting \eqref{EqRGCond}. In what follows, let us fix $m\leq p-1$.
For $j,k\geq 1$ and $i\in\{1,\dots,n\}$, we set
\[
\eta_k^{(i)}=\lambda_k^{-1/2}\langle X_i,u_k\rangle\quad\text{ and }\quad \eta_{j,k}^{(i)}=\eta_j^{(i)}\eta_k^{(i)}-\delta_{j,k}.
\]
By Assumption \ref{SubGauss2}, the $\eta_k^{(i)}$, $k\geq 1$ and $i\in\{1,\dots,n\}$, are centered, independent, sub-Gaussian random variables. Now, we can write
\begin{align*}
&\|(R_jE)^mP_j\|_2^2
=\sum_{k_1\neq j}\frac{1}{(\lambda_{k_1}-\lambda_j)^2}\|P_{k_1}E(R_jE)^{m-1}P_j\|_2^2\\
&=\frac{1}{n^{2m}}\sum_{k_1\neq j}\frac{\lambda_j\lambda_{k_1}}{(\lambda_{k_1}-\lambda_j)^2}\Big(\sum_{k_2,\dots,k_m\neq j}\sum_{\mathbf{i}\in\{1,\dots,n\}^m}\Big(\prod_{a=2}^m \frac{\lambda_{k_a}}{\lambda_{k_a}-\lambda_j}\eta_{k_{a-1},k_a}^{(i_{a-1})}\Big)\eta_{k_m,j}^{(i_m)}\Big)^2.
\end{align*}
Multiplying out, we get
\begin{align}
&\mathbb{E}\|(R_jE)^mP_j\|_2^2\label{EqExpPertTerm}\\
&=\frac{1}{n^{2m}}\sum_{k_1,\dots,k_{2m-1}\neq j}\frac{\lambda_j\lambda_{k_1}}{(\lambda_{k_1}-\lambda_j)^2}\Big(\prod_{a=2}^{2m-1} \frac{\lambda_{k_a}}{\lambda_{k_a}-\lambda_j}\Big)\Big\{\sum_{\mathbf{i}\in\{1,\dots,n\}^{2m-1}}\mathbb{E}\eta_{\mathbf{k}}^{(\mathbf{i})}\Big\}\nonumber
\end{align}
with
\begin{align*}
&\eta_{\mathbf{k}}^{(\mathbf{i})}=\eta_{k_1,k_2}^{(i_1)}\dots\eta_{k_{m-1},k_m}^{(i_{m-1})}(\eta_{k_{m},k_{m+1}}^{(i_{m})}+\delta_{k_m,k_{m+1}})\eta_{k_{m+1},k_{m+2}}^{(i_{m+1})}\cdots\eta_{k_{2m-1},k_1}^{(i_{2m-1})}.
\end{align*}
To obtain \eqref{EqExpPertTerm}, we used that $\eta_{j}^{(i_m)}\eta_{j}^{(i_{m}')}$ is independent of the other Karhunen-Loève coefficients (since $j$ appears only twice) and that $\mathbb{E}\eta_{j}^{(i_m)}\eta_{j}^{(i_{m}')}=\delta_{i_m,i_{m}'}$, forcing $i_{m}'=i_{m}$.

Since \eqref{EqExpPertTerm} clearly implies \eqref{EqBoundPolChaos} in the case $m=1$, we restrict ourselves to $m>1$ in what follows. By Assumption \ref{SubGauss}, there is a constant $C_2$ depending only on $\SGN$ and $m$ such that $|\mathbb{E}\eta_{\mathbf{k}}^{(\mathbf{i})}|\leq C_2$ for every $\mathbf{i}\in\{1,\dots,n\}^{2m-1}$ and every $\mathbf{k}\in(\mathbb{N}\setminus \{j\})^{2m-1}$. Hence, in order to upper-bound \eqref{EqExpPertTerm} we have to bound for each $\mathbf{k}\in(\mathbb{N}\setminus \{j\})^{2m-1}$ the number of $(2m-1)$-tuples $\mathbf{i}\in\{1,\dots,n\}^{2m-1}$ for which $\mathbb{E}\eta_{\mathbf{k}}^{(\mathbf{i})}$ is non-zero.

First, in order that $\mathbb{E}\eta_{\mathbf{k}}^{(\mathbf{i})}\neq 0$, it is necessary that each number in $\mathbf{i}$ except of $i_m$ appears at least twice (use the independence of the Karhunen-Loève coefficients). It is easy to see that the number of $(2m-1)$-tuples $\mathbf{i}\in\{1,\dots,n\}^{2m}$ having the the latter property is bounded by $C_3n^{m}$ with $C_3$ depending only on $m$. Hence, we have
\begin{equation}\label{EqUpperBoundIndices1}
|\{\mathbf{i}\in\{1,\dots,n\}^{2m-1}:\mathbb{E}\eta_{\mathbf{k}}^{(\mathbf{i})}\neq 0\}|\leq C_3n^{m}
\end{equation}
for each $\mathbf{k}\in(\mathbb{N}\setminus \{j\})^{2m-1}$.

We now give another estimate for the cardinality in \eqref{EqUpperBoundIndices1}.  Fix $\mathbf{k}\in(\mathbb{N}\setminus \{j\})^{2m-1}$ and let $l=|\{k_a:a\in \{1,\dots,2m-1\}\}|$. Our goal is to show that
\begin{equation}\label{EqUpperBoundIndices2}
|\{\mathbf{i}\in\{1,\dots,n\}^{2m-1}:\mathbb{E}\eta_{\mathbf{k}}^{(\mathbf{i})}\neq 0\}|\leq C_4n^{2m-l}
\end{equation}
for some constant $C_4$ depending only on $m$. Since this is clear for $l=1$, we assume that $l\geq 2$ in what follows.

We call $a\in \{1,\dots, 2m-1\}$ a boundary point if $k_a\neq k_{a+1}$ (using the convention $k_{2m}=k_1$), and let $\mathcal{B}$ be the set of all boundary points. In what follows it is important to associate the  labels $k_a,k_{a+1}$ to each boundary point $a$ and to order all boundary points circularly such that each boundary point has exactly one matching label with both of its neighbor boundary points. We now also fix $\mathbf{i}\in\{1,\dots,n\}^{2m-1}$ such that $\mathbb{E}\eta_{\mathbf{k}}^{(\mathbf{i})}\neq 0$ and claim that
\begin{equation}\label{EqCounting}
\mathbb{E}\prod_{b\in\mathcal{B}}\eta_{k_b,k_{b+1}}^{(i_b)}\neq 0\quad\text{ and thus }\quad |\{i_b:b\in \mathcal{B}\}|\leq |\mathcal{B}|-l+1.
\end{equation}
The first claim follows from the independence and symmetry of the Karhunen-Loève coefficients, the second claim is proved below by induction. Using \eqref{EqCounting}, we get that $|\{i_a:a\in \{1,\dots, 2m-1\}\}|\leq 2m-l$. Hence, we conclude that each $\mathbf{i}$ from the set in \eqref{EqUpperBoundIndices2} has property that there are at most $2m-l$ different entries, from which \eqref{EqUpperBoundIndices2} follows.

It remains to deduce the second claim in \eqref{EqCounting} from the first one. This can be done by induction on $|\mathcal{B}|$. For $|\mathcal{B}|=2$ and $l=2$ the claim is clear. For the induction step assume that the implication in \eqref{EqCounting} holds for all sets $\mathcal{B}'$ of boundary points with $|\mathcal{B}'|<|\mathcal{B}|$. If $|\mathcal{B}|=l$, then the claim follows because in this case $i_a=i_{b}$ for all neighbor boundary points $a,b\in\mathcal{B}$ (use the independence of the Karhunen-Loève coefficients and the fact that for $a,b\in\mathcal{B}$ we have either $k_{a+1}=k_b$ or $k_{a}=k_{b+1}$ if and only if $a,b$ are neighbors) and thus $|\{i_b:b\in \mathcal{B}\}|= 1$. If $|\mathcal{B}|<l$, then consider $\mathcal{B}_1=\{b\in \mathcal{B}:i_b=i_a\}$ for some $a\in \mathcal{B}$. Since the claim is clear for $\mathcal{B}_1=\mathcal{B}$, we restrict ourselves to $\mathcal{B}_1\subset\mathcal{B}$. Our goal is to apply the induction hypothesis to an appropriate partition of $\mathcal{B}\setminus\mathcal{B}_1$. For this we use the fact that for each $a\in \mathcal{B}_1$, there is a $a\neq b\in \mathcal{B}_1$ with $k_a=k_b$ or $k_a=k_{b+1}$, as well as a  $a\neq c\in \mathcal{B}_1$ with $k_{a+1}=k_c$ or $k_{a+1}=k_{c+1}$ (this follows from the independence of the Karhunen-Loève coefficients using that the expectation in \eqref{EqCounting} is non-zero). Additionally, we write $|\mathcal{B}_1|=e_1+f_1$ where $e_1$ is the number of boundary points $b\in \mathcal{B}_1$ for which $k_b\neq k_c$ for all boundary points $c\neq b$. Using the previous facts, we can construct a partition $\mathcal{B}_2,\dots,\mathcal{B}_f$, $f\leq f_1$, of $\mathcal{B}\setminus\mathcal{B}_1$ such that the $\mathcal{B}_j$ have pairwise different $k$-labels and such that the points in $\mathcal{B}_j$ can be ordered (circularly) in such a way that each boundary point has one matching label with both its neighbors. In particular, we have $\mathbb{E}\prod_{b\in\mathcal{B}_j}\eta_{k_b,k_{b+1}}^{(i_b)}\neq 0$ and thus by the induction hypothesis $|\{i_b:b\in \mathcal{B}_j\}|\leq |\mathcal{B}_j|-|\{k_b:b\in \mathcal{B}_j\}|+1$. Using that $\sum_{j=2}^f|\{k_b:b\in \mathcal{B}_j\}|=l-e_1$ and $\sum_{j=2}^f|\mathcal{B}_j|=|\mathcal{B}|-e_1-f_1$, we conclude that $|\{i_b:b\in \mathcal{B}\}|\leq 1+\sum_{j=2}^f|\{i_b:b\in \mathcal{B}_j\}|\leq 1+|\mathcal{B}|-e_1-f_1-l+e_1+f$, which gives the claim.

Inserting \eqref{EqUpperBoundIndices1} and \eqref{EqUpperBoundIndices2} into \eqref{EqExpPertTerm}, we conclude that
\begin{align*}
&\mathbb{E}\|(R_jE)^mP_j\|_2^2\\
&\leq C_5\sum_{\mathcal{I}_1,\dots,\mathcal{I}_l}\frac{1}{n^{m\vee l}}\sum_{k_1,\dots,k_{l}\neq j}\frac{\lambda_j\lambda_{k_1}^{|\mathcal{I}_1|}}{|\lambda_{k_1}-\lambda_j|^{|\mathcal{I}_1|+1}}\frac{\lambda_{k_2}^{|\mathcal{I}_2|}}{|\lambda_{k_2}-\lambda_j|^{|\mathcal{I}_2|}}\cdots\frac{\lambda_{k_l}^{|\mathcal{I}_l|}}{|\lambda_{k_l}-\lambda_j|^{|\mathcal{I}_l|}},
\end{align*}
where the sum is over all partitions of $\{1,\dots,2m-1\}$ with $1\in \mathcal{I}_1$. For simplicity, we now focus on the case $|\mathcal{I}_1|=1$, the remaining cases follow analogously. Furthermore, we consider separately the cases $m\leq l$ and $l< m$. First, for $m\leq  l$, we have
\begin{align*}
&\frac{1}{n^l}\sum_{k_1,\dots,k_{l}\neq j}\frac{\lambda_j\lambda_{k_1}}{(\lambda_{k_1}-\lambda_j)^{2}}\frac{\lambda_{k_2}^{|\mathcal{I}_2|}}{|\lambda_{k_2}-\lambda_j|^{|\mathcal{I}_2|}}\cdots\frac{\lambda_{k_l}^{|\mathcal{I}_l|}}{|\lambda_{k_l}-\lambda_j|^{|\mathcal{I}_l|}}\\
&\leq \frac{1}{n^l}\sum_{k_1\neq j}\frac{\lambda_j\lambda_{k_1}}{(\lambda_{k_1}-\lambda_j)^{2}}\bigg(\frac{2\lambda_j}{g_j}\bigg)^{2m-l-1}\bigg(\sum_{k\neq j}\frac{\lambda_{k}}{|\lambda_{k}-\lambda_j|}\bigg)^{l-1}\\
&\leq \frac{2^{l-1}}{n}\sum_{k_1\neq j}\frac{\lambda_j\lambda_{k_1}}{(\lambda_{k_1}-\lambda_j)^{2}}\bigg(\frac{1}{n}\frac{\lambda_j}{g_j}\sum_{k\neq j}\frac{\lambda_{k}}{|\lambda_{k}-\lambda_j|}\bigg)^{m-1},
\end{align*}
where we applied $\max_{k\neq j}\lambda_k/|\lambda_k-\lambda_j|\leq 2\lambda_j/g_j$, the bound $\sum_{c=2}^l(|\mathcal{I}_c|-1)=2m-2-l+1\leq l-1$, and \eqref{EqRGCond} with $c_1<2$.
On the other hand, if $l< m$, meaning that $\sum_{c=2}^l(|\mathcal{I}_c|-1)=2m-l-1> l$, then the number of $c$'s such that $|\mathcal{I}_c|=1$ is smaller than the number of $c$'s such that $|\mathcal{I}_c|>2$. Moreover, for $|\mathcal{I}_c|>2$, we can bound
\[
\sum_{k\neq j}\frac{\lambda_{k}^{|\mathcal{I}_c|}}{|\lambda_{k}-\lambda_j|^{|\mathcal{I}_c|}}\leq \bigg(\frac{2\lambda_j}{g_j}\sum_{k\neq j}\frac{\lambda_{k}}{|\lambda_{k}-\lambda_j|}\bigg)^{\frac{|\mathcal{I}_c|}{2}}\bigwedge\frac{2\lambda_j}{g_j}\bigg(\frac{2\lambda_j}{g_j}\sum_{k\neq j}\frac{\lambda_{k}}{|\lambda_{k}-\lambda_j|}\bigg)^{\frac{|\mathcal{I}_c|-1}{2}},
\]
and we get
\begin{align*}
&\frac{1}{n^{m}}\sum_{k_1,\dots,k_{l}\neq j}\frac{\lambda_j\lambda_{k_1}}{(\lambda_{k_1}-\lambda_j)^{2}}\frac{\lambda_{k_2}^{|\mathcal{I}_2|}}{|\lambda_{k_2}-\lambda_j|^{|\mathcal{I}_2|}}\cdots\frac{\lambda_{k_l}^{|\mathcal{I}_l|}}{|\lambda_{k_l}-\lambda_j|^{|\mathcal{I}_l|}}\\
&\leq \frac{1}{n}\sum_{k_1\neq j}\frac{\lambda_j\lambda_{k_1}}{(\lambda_{k_1}-\lambda_j)^{2}}\bigg(\frac{2}{n}\frac{\lambda_j}{g_j}\sum_{k\neq j}\frac{\lambda_{k}}{|\lambda_{k}-\lambda_j|}\bigg)^{m-1}.
\end{align*}
This completes the proof.
\end{proof}

\subsection{Sub-exponential decay of eigenvalues}
Let us briefly specialize our findings to sub-exponential decay of eigenvalues of the form
\begin{equation}\label{EqExpDec}
\lambda_j=e^{- j^\alpha},\quad j\ge 1,
\end{equation}
for some $\alpha\in (0,1]$. In this case, the eigenvalue expressions in Theorems \ref{CorEEVUpperBound} and \ref{CorEEPBound} can be bounded as follows.
\begin{lemma}\label{LemEvExpr} If \eqref{EqExpDec} holds for some $\alpha\in (0,1]$, then there is a constant $C>1$ depending only on $\alpha$ such that, for every $j\geq 1$,
\[
\frac{\lambda_j}{g_j}\leq Cj^{1-\alpha},\quad\sum_{k\neq j}\frac{\lambda_k}{|\lambda_k-\lambda_j|}\leq Cj,\quad\sum_{k\neq j}\frac{\lambda_j\lambda_k}{(\lambda_k-\lambda_j)^2}\leq Cj^{2-2\alpha}.
\]
Moreover, for every $j\geq C$,
\[
\sum_{k\neq j}\frac{\lambda_k}{\lambda_k-\lambda_j}\geq C^{-1}j.
\]
\end{lemma}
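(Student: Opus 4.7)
\textbf{Proof plan for Lemma \ref{LemEvExpr}.} The four estimates all hinge on a single analytic fact: by concavity of $x\mapsto x^{\alpha}$ for $\alpha\in(0,1]$, the mean value theorem yields
\[
\alpha\,|k-j|\,\max(j,k)^{\alpha-1}\leq \bigl|k^{\alpha}-j^{\alpha}\bigr|\leq \alpha\,|k-j|\,\min(j,k)^{\alpha-1},
\]
and on the interval $(0,1]$ one has $x/2\leq 1-e^{-x}\leq x\leq e^{x}-1\leq 2x$. I would first combine these to prove $\lambda_j/g_j\leq Cj^{1-\alpha}$: concavity of $x^{\alpha}$ together with $e^{y}-1>1-e^{-y}$ (which follows from $e^{y}+e^{-y}>2$) shows that the minimum in the definition of $g_j$ is attained at $\lambda_j-\lambda_{j+1}$, so $g_j=\lambda_j(1-e^{-a_j})$ with $a_j=(j+1)^{\alpha}-j^{\alpha}\in[\alpha 2^{\alpha-1}j^{\alpha-1},\alpha j^{\alpha-1}]\subset(0,1]$. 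Applying $1-e^{-x}\geq x/2$ then gives $g_j\geq c_{\alpha}j^{\alpha-1}\lambda_j$, which is the first inequality.

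For the two remaining upper bounds I would reparametrize each sum in terms of $r_k=\lambda_k/\lambda_j=e^{-(k^{\alpha}-j^{\alpha})}$, giving
\[
\frac{\lambda_k}{|\lambda_k-\lambda_j|}=\frac{r_k}{|1-r_k|},\qquad \frac{\lambda_j\lambda_k}{(\lambda_k-\lambda_j)^{2}}=\frac{r_k}{(1-r_k)^{2}},
\]
and then split each sum into $\{k<j\}$ and $\{k>j\}$, and each of those into a near range and a far range. Writing $b=|k-j|$ and $d=|k^{\alpha}-j^{\alpha}|$, the near range is $d\leq 1$, where concavity gives $d\geq \alpha b\,j^{\alpha-1}/C$; here I use $r_k/|1-r_k|\leq C/d$ and $r_k/(1-r_k)^2\leq C/d^{2}$, so the contributions are bounded by $Cj^{1-\alpha}\sum_{b=1}^{Cj^{1-\alpha}}1/b$ and $Cj^{2-2\alpha}\sum_{b\geq 1}1/b^{2}$ respectively. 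The far range ($d>1$) is handled by the exponential bounds $r_k/|1-r_k|\leq 2e^{-d}$ and $r_k/(1-r_k)^{2}\leq 4e^{-d}$, and using $d\geq \alpha b\max(j,k)^{\alpha-1}$ (refined on $k>2j$ by the observation $k^{\alpha}-j^{\alpha}\geq k^{\alpha}/2$ once $k\geq 2^{1/\alpha}j$) these contributions are absorbed into an $O(1)$ term. Since $j^{1-\alpha}\log j\leq Cj$ uniformly in $\alpha\in(0,1]$ and $j\geq 1$ (as $\log j\leq C_{\alpha}j^{\alpha}$ for $\alpha>0$, and the case $\alpha=1$ makes the whole far sum geometric), this yields the bounds $Cj$ and $Cj^{2-2\alpha}$ respectively.

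For the last (lower bound) inequality I would use the signed decomposition $\sum_{k\neq j}\lambda_k/(\lambda_k-\lambda_j)=\sum_{k<j}1/(1-r_k)-\sum_{k>j}r_k/(1-r_k)$. Each summand of the positive part satisfies $1/(1-r_k)\geq 1$, so that part is at least $j-1$; the negative part is controlled by the second inequality restricted to $k>j$, which my split above already shows is at most $Cj^{1-\alpha}\log j=o(j)$. Combining gives $\sum_{k\neq j}\lambda_k/(\lambda_k-\lambda_j)\geq (j-1)-Cj^{1-\alpha}\log j\geq C^{-1}j$ once $j$ exceeds a constant depending only on $\alpha$. The main technical obstacle is keeping the near/far split uniform as $\alpha\uparrow 1$, where the logarithmic factor in the near range threatens the sharp linear bound; this is absorbed by the inequality $\log j\leq C_{\alpha}j^{\alpha}$, and the edge case $\alpha=1$ is actually simpler because the far geometric sum is $O(1)$ from the start.
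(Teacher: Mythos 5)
Your overall strategy (concavity of $x\mapsto x^\alpha$, the elementary bounds on $1-e^{-x}$, and a near/far split in the variable $d=|k^\alpha-j^\alpha|$) is a reasonable elementary substitute for the paper's own argument, which is only sketched there (concavity plus comparison with an integral and incomplete-Gamma estimates). Your treatment of $\lambda_j/g_j$ and of the near ranges is correct. However, there is a concretely false step in the far range: the inequality $r_k/|1-r_k|\leq 2e^{-d}$ holds only on the side $k>j$, where $r_k=e^{-d}<1$. For $k<j$ you have $r_k=e^{d}>1$ and
\[
\frac{r_k}{|1-r_k|}=\frac{e^{d}}{e^{d}-1}=\frac{1}{1-e^{-d}}\in\Big[1,\tfrac{1}{1-e^{-1}}\Big]\quad\text{for }d\geq 1,
\]
so each such term is of order $1$, not exponentially small. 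Since there are roughly $j$ indices $k<j$ in the far range, their contribution to $\sum_{k\neq j}\lambda_k/|\lambda_k-\lambda_j|$ is of order $j$ — it is in fact the \emph{dominant} contribution to the bound $Cj$ (note each term with $k<j$ is at least $1$, so the sum is at least $j-1$), not an $O(1)$ remainder. The result survives because $O(j)$ is exactly the target, but the accounting must be redone: bound the far $k<j$ terms by $(1-e^{-1})^{-1}$ each and multiply by their number. Relatedly, even where the exponential bound is valid, $\sum_{d>1}e^{-d}$ over the relevant indices is of order $j^{1-\alpha}/\alpha$ (about $j^{1-\alpha}/\alpha$ indices share each unit interval of $d$-values), not $O(1)$; this is harmless since $j^{1-\alpha}\leq j$ and $j^{1-\alpha}\leq j^{2-2\alpha}$, but your ``absorbed into an $O(1)$ term'' is not literally true for $\alpha<1$.

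Two smaller points. The quantity $r_k/(1-r_k)^2$ is symmetric under $r_k\mapsto 1/r_k$, so your bound $\leq 4e^{-d}$ for that expression \emph{is} valid on both sides, and the third estimate goes through (its far range contributes $O(j^{1-\alpha})\leq O(j^{2-2\alpha})$). In the signed decomposition for the lower bound, the summand for $k<j$ is $\lambda_k/(\lambda_k-\lambda_j)=r_k/(r_k-1)=1/(1-r_k^{-1})$, not $1/(1-r_k)$ (which would be negative); with this correction the positive part is indeed at least $j-1$, the negative part involves only $k>j$ where your exponential bound is legitimate, and the lower bound follows as you describe.
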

For $\alpha=1$, the claim follows from the bound $e^{-j}-e^{k}\geq (1-e^{-1})e^{-j}$, $k>j$, in combination with the inequalities $\sum_{k>j}e^{-k}\leq Ce^{-j}$ and $\sum_{k<j}e^{k}\leq Ce^{j}$, $j\geq 1$. For $\alpha<1$, the claim follows from similar concavity arguments combined with a comparison of the sums with an integral and estimates for the incomplete Gamma function. We omit the details of the proof. Inserting Lemma \ref{LemEvExpr} into Theorem \ref{CorEEVUpperBound}, we have the following consequence (note that the remainder term in \eqref{EqEEVUpperBound} can be dropped since $e^{-c_2nj^{2\alpha-2}}\leq e^{- j^\alpha}$ for $j\leq c_1n^{1/(2-\alpha)}$ with $c_1$ small enough).
\begin{corollary}\label{CorEVExpDecay}
Suppose that Assumption \ref{SubGauss} holds and that \eqref{EqExpDec} holds for some $\alpha\in (0,1]$. Then there are constants $c_1,C_1$ depending only on $\SGN$ and $\alpha$ such that for all $c_1^{-1}\leq j\leq c_1n^{1/(2-\alpha)}$, 
\[ C_1^{-1}\Big(\frac{1}{\sqrt{n}}+\frac{j}{n}\Big)\leq \mathbb{E}^{1/2}(\hat\lambda_j/\lambda_j-1)^2\leq C_1\Big(\frac{1}{\sqrt{n}}+\frac{j}{n}\Big).
\]
\end{corollary}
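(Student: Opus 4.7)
The plan is to invoke the matching upper and lower bounds of Theorem \ref{CorEEVUpperBound}, insert the eigenvalue sum estimates of Lemma \ref{LemEvExpr}, and divide through by $\lambda_j$. First I would verify the applicability condition \eqref{EqRGCond}: by Lemma \ref{LemEvExpr},
\[
\frac{\lambda_j}{g_j}\Big(\sum_{k\neq j}\frac{\lambda_k}{|\lambda_k-\lambda_j|}+\frac{\lambda_j}{g_j}\Big)\leq Cj^{1-\alpha}(j+j^{1-\alpha})\leq C'j^{2-\alpha},
\]
and since $j\leq c_1 n^{1/(2-\alpha)}$, the right-hand side is at most $C'c_1^{2-\alpha}n$, which lies below the constant in \eqref{EqRGCond} for $c_1$ small enough (depending only on $\alpha$).

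For the upper bound, I would apply \eqref{EqEEVUpperBound} together with $\sum_{k\neq j}\lambda_k/|\lambda_k-\lambda_j|\leq Cj$ to obtain a leading term of order $\lambda_j(1/\sqrt{n}+j/n)$, and then absorb the exponential remainder. From $g_j/\lambda_j\geq C^{-1}j^{\alpha-1}$ one gets $ng_j^2/\lambda_j^2\geq C^{-2}nj^{2\alpha-2}\geq j^\alpha$ as long as $j^{2-\alpha}\leq C^2 n/c_2$, which is implied by $j\leq c_1 n^{1/(2-\alpha)}$ for small $c_1$. Hence $e^{-c_2 n g_j^2/\lambda_j^2}\leq \lambda_j$, and since $\lambda_1$ and $\operatorname{tr}(\Sigma)=\sum_k e^{-k^\alpha}$ are bounded constants, the whole remainder is of order $\lambda_j/\sqrt{n}$ and is absorbed. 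Dividing by $\lambda_j$ yields the upper bound.

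For the lower bound, I would invoke \eqref{EqEEVLowerBound}. By the last assertion of Lemma \ref{LemEvExpr}, $|\sum_{k\neq j}\lambda_k/(\lambda_k-\lambda_j)|\geq C^{-1}j$ provided $j\geq c_1^{-1}$ is large enough, so the leading term is at least a constant multiple of $\lambda_j(1/\sqrt{n}+j/n)$. The subtracted remainder is bounded, after inserting the lemma, by $C\lambda_j j^{2-\alpha/2}/n^{3/2}$. Splitting into the regimes $j\leq\sqrt{n}$ (where the ratio of remainder to leading term is at most $n^{-\alpha/4}$) and $j\geq \sqrt{n}$ (where the critical identity $(1-\alpha/2)/(2-\alpha)=1/2$ gives ratio at most $c_1^{1-\alpha/2}$), this remainder is absorbed into the leading term for $c_1$ small. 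Dividing by $\lambda_j$ gives the matching lower bound.

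The main obstacle is this last step: the exponent identity $(1-\alpha/2)/(2-\alpha)=1/2$ is exactly critical, so the absorption of the quadratic remainder in the lower bound has only an $n$-independent multiplicative margin $c_1^{1-\alpha/2}$, which is what forces the stated range $j\leq c_1 n^{1/(2-\alpha)}$ with a truly small $c_1$. A minor secondary point: the lower half of Theorem \ref{CorEEVUpperBound} is stated under a Gaussian hypothesis, while the corollary is nominally sub-Gaussian; the intended reading appears to be that the two-sided bound is obtained under Gaussianity, with the upper half alone valid under Assumption \ref{SubGauss}.
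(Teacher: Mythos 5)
Your proof is correct and follows exactly the route the paper intends: insert Lemma \ref{LemEvExpr} into Theorem \ref{CorEEVUpperBound}, verify \eqref{EqRGCond}, and absorb the remainders, with your critical-exponent computation $(1-\alpha/2)/(2-\alpha)=1/2$ supplying the absorption of the lower-bound remainder that the paper leaves entirely implicit (its only stated justification is the one-line remark that $e^{-c_2nj^{2\alpha-2}}\leq e^{-j^\alpha}$ in the given range). Your observation that the lower half of Theorem \ref{CorEEVUpperBound} requires Gaussianity while the corollary is stated under Assumption \ref{SubGauss} is a genuine imprecision in the paper itself, and your reading of it is the intended one.
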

Corollary~\ref{CorEVExpDecay} provides matching upper and lower bounds in $L^2$-norm in the range $j\leq cn^{1/(2-\alpha)}$. These bounds reveal a sharp phase transition. In fact, the linear perturbation term dominates the bound for $j\leq c_2\sqrt{n}$, while the second order perturbation dominates the bound for $c_2\sqrt{n} \leq j\leq c_1n^{1/(2-\alpha)}$. Interestingly, in the latter case one still has an eigenvalue separation property, and it turns out that the Hilbert-Schmidt distance of perturbed and unperturbed eigenprojections are dominated by the linear perturbation term almost throughout the (optimal) range $j\leq c_1n^{1/(2-\alpha)}$:
\begin{corollary}
Suppose that Assumption \ref{SubGauss2} holds and that \eqref{EqExpDec} holds for some $\alpha\in (0,1]$. Let $\epsilon>0$. Then there are constants $c_1,C_1$ depending only on $\SGN$, $\alpha$, and $\epsilon$ such that for every $j\leq c_1n^{(1-\epsilon)/(2-\alpha)}$,
\[
\mathbb{E}^{1/2}\|\hat P_j-P_j\|_2^2\leq C_1 \frac{j^{1-\alpha}}{\sqrt{n}}.
\]
\end{corollary}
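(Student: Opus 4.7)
The plan is to combine Theorem \ref{CorEEPBound} with the eigenvalue estimates from Lemma \ref{LemEvExpr}, choosing the free parameter $p$ large enough (as a function of $\epsilon$) so that the remainder term is absorbed into the linear perturbation term. No new probabilistic input is required beyond what is already stated; the argument is a bookkeeping exercise on the bounds.

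First I would verify that the range condition $j \leq c_1 n^{(1-\epsilon)/(2-\alpha)}$ implies \eqref{EqRGCond}, so that Theorem \ref{CorEEPBound} is applicable. Indeed, by Lemma \ref{LemEvExpr},
\[
\frac{\lambda_j}{g_j}\Big(\sum_{k\neq j}\frac{\lambda_k}{|\lambda_k-\lambda_j|}+\frac{\lambda_j}{g_j}\Big)\leq C j^{1-\alpha}(j+j^{1-\alpha})\leq 2C j^{2-\alpha},
\]
which for $j\leq c_1 n^{(1-\epsilon)/(2-\alpha)}$ is bounded by $2C c_1^{2-\alpha} n^{1-\epsilon}\leq c_1' n$ provided $c_1$ is chosen sufficiently small.

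Next, I would insert the three eigenvalue bounds from Lemma \ref{LemEvExpr} into the conclusion of Theorem \ref{CorEEPBound} to obtain, for every $p\geq 1$ and some constant $C>0$ depending only on $\SGN$, $\alpha$, and $p$,
\[
\mathbb{E}^{1/2}\|\hat P_j-P_j\|_2^2 \leq C\sqrt{\frac{j^{2-2\alpha}}{n}} + C\Big(\frac{j^{2-\alpha}}{n}\Big)^{p/2} = C\frac{j^{1-\alpha}}{\sqrt{n}} + C\Big(\frac{j^{2-\alpha}}{n}\Big)^{p/2}.
\]
It then remains to check that the remainder is controlled by the linear term in the given range. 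Using $j\leq c_1 n^{(1-\epsilon)/(2-\alpha)}$, we have $j^{2-\alpha}/n \leq c_1^{2-\alpha} n^{-\epsilon}$, so
\[
\Big(\frac{j^{2-\alpha}}{n}\Big)^{p/2} \leq c_1^{p(2-\alpha)/2}\, n^{-p\epsilon/2}.
\]
Choosing $p=\lceil 1/\epsilon\rceil$ gives $n^{-p\epsilon/2}\leq n^{-1/2} \leq j^{1-\alpha}/\sqrt{n}$ (the last inequality using $j\geq 1$ and $\alpha\leq 1$), so the remainder is absorbed into $C_1 j^{1-\alpha}/\sqrt{n}$ after adjusting the constant.

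There is essentially no obstacle: every ingredient is in place. The only delicate point is the calibration of $p$ in terms of $\epsilon$, which is what forces the small loss from $1/(2-\alpha)$ to $(1-\epsilon)/(2-\alpha)$ in the range of $j$. This loss is an artefact of bounding the polynomial remainder of Corollary \ref{CorImprovedBound2} by a pure power of $\delta_j'$; a more refined treatment of that remainder would be needed to push the argument to the full (optimal) range $j\leq c_1 n^{1/(2-\alpha)}$.
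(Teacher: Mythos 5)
Your argument is correct and is exactly the intended derivation: the paper states this corollary without a separate proof, leaving it as the insertion of Lemma \ref{LemEvExpr} into Theorem \ref{CorEEPBound} with $p$ chosen of order $1/\epsilon$ so that the remainder $(j^{2-\alpha}/n)^{p/2}\leq n^{-p\epsilon/2}\leq n^{-1/2}\leq j^{1-\alpha}/\sqrt{n}$ is absorbed into the linear term. Your verification of \eqref{EqRGCond} in the stated range and the bookkeeping of how the constants depend on $\SGN$, $\alpha$, and $\epsilon$ (via $p$) are both accurate.
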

\begin{remark}
Similar results hold if $\lambda_j=e^{- cj^\alpha}$, $j\ge 1$. One-sided versions of such an eigenvalue behavior arise for a large class of operators defined by a kernel; see e.g. \cite{B18,MR2586970} and Section \ref{SecDisc} below.
\end{remark}

\subsection{On the relative rank condition from \cite{JW18a,JW18b}}\label{SecRRC}
In case of the empirical covariance operator our perturbation results can be applied under the condition
\begin{equation}\label{EqCondKLN}
\frac{\lambda_j}{g_j}\Big(\sum_{k\neq j}\frac{\lambda_k}{|\lambda_k-\lambda_j|}+\frac{\lambda_j}{g_j}\Big)\leq cn.
\end{equation}
For instance, in the case of exponentially decaying eigenvalues, \eqref{EqCondKLN} means that $j\leq cn$. Hence, our framework allows us to study empirical eigenvalues and eigenprojections in a (nearly) optimal range (note that $\hat\Sigma$ is of rank $n$, hence all eigenprojections with index larger $n$ are non-unique). More generally, we conjecture that if \eqref{EqCondKLN} does not hold (with $c$ large enough), then we do not have the eigenvalue separation property from Remark \ref{RemEVSep}, in which case we could not even cluster empirical and population eigenvalues and eigenprojections appropriately.

Improving standard perturbation results in the case of the empirical covariance operator has been considered previsouly in Jirak and Wahl \cite{JW18a,JW18b}, who established relative perturbation bounds, tailored for empirical covariance operators under the condition
\begin{equation}\label{EqCondRRCond}
\sum_{k\neq j}\frac{\lambda_k}{|\lambda_k-\lambda_j|}+\frac{\lambda_j}{g_j}\leq c\sqrt{n}.
\end{equation}
They showed that under \eqref{EqCondRRCond} a strong contraction property holds, implying that the difference of empirical and true eigenvalues (resp. eigenprojections) can be accurately approximated by the first order perturbation terms. The achievement of this section is to extend \eqref{EqCondRRCond} to \eqref{EqCondKLN}, by invoking higher-order expansions based on $\delta_j$. This extended regime reveals new features due to the fact that the contraction property does not continue to hold.

In the case of empirical eigenvalues, we derive upper and lower bounds in $L^2$-norm, dominated by the first order perturbation term if \eqref{EqCondRRCond} holds, and dominated by the second order perturbation term if \eqref{EqCondRRCond} does not hold but \eqref{EqCondKLN} holds. \cite{JW18a} established the necessity of \eqref{EqCondRRCond} (for an accurate first order perturbation expansion) by constructing a counterexample in terms of an one-factor model. In contrast, our results extend the necessity of \eqref{EqCondRRCond} to a much larger class of models.

The situation is different in the case of empirical eigenprojections,  where Theorem \ref{CorEEPBound} shows that the Hilbert-Schmidt distance between true and empirical eigenprojections is dominated by the linear perturbation term throughout \eqref{EqCondKLN}. Interestingly, it follows from Lemma \ref{LemEVSeparation} that if \eqref{EqRGCond} holds, then \eqref{EqEEVUpperBound} yields $\mathbb{E}^{1/2}(\hat\lambda_j-\lambda_j)^2<g_j/2+remainder$, meaning that the minimum of the $L^2$-distance between $\hat\lambda_j$ and the $(\lambda_k)$ is attained at $k=j$ (ignoring the remainder term). This eigenvalue separation property gives an explanation for the strong result in Theorem \ref{CorEEPBound}. We believe that the existence of this extended range is closely related to our strong probabilistic assumptions (independent and sub-Gaussian Karhunen-Loève coefficients). In fact, in the one-factor model constructed in \cite{JW18a}, Condition \eqref{EqCondRRCond} is also equivalent to a weak form of separation of eigenvalues, indicating that Theorem \ref{CorEEPBound} does not continue to hold under the weaker moment assumptions from \cite{JW18a}.

\subsection{Extensions}\label{SecDisc}
\subsubsection*{Kernel operators and kernel Gram matrices}
Kernel operators and their approximations by kernel Gram matrices play a fundamental role in machine learning problems. While we discussed applications to the empirical covariance operator, we show in this section how these results can be transferred to kernel operators; see e.g. \cite{CS02,ZBB04,VRCGO05} for more details. For this, let $k(\cdot,\cdot)$ be a continuous and positive definite kernel on a compact mertric space $\mathcal{X}$, and let $\mathcal{H}$ be the reproducing kernel Hilbert space (RKHS) of $k$. Given a probability measure $\rho$ on $\mathcal{X}$, we can define the integral operator $K_\rho:L^2(\rho)\rightarrow L^2(\rho), K_\rho f(x)= \int_\mathcal{X} k(x,y)f(y)\rho(dy)$. It is easy to see, that $K_\rho$ is a self-adjoint positive trace-class operator. Moreover, given independent random variables $X,X_1,\dots,X_n$ in $\mathcal{X}$ with common distribution $\rho$, we can construct the approximation $K_n=(n^{-1}k(X_i,X_j))_{i,j=1}^n$ of $K_\rho$. The close link to covariance operators can be seen by introducing the so-called restriction operator $R_\rho:\mathcal{H}\rightarrow L^2(\rho)$, mapping $f\in\mathcal{H}$ to $f\in L^2(\rho)$ by restricting it to the support of $\rho$.
\[
R_\rho R_\rho^*=K_\rho,\qquad R_\rho^*R_\rho=\mathbb{E}k(X,\cdot)\otimes k(X,\cdot)=\Sigma.
\]
Similarly, $R_n:\mathcal{H}\rightarrow\mathbb{R}^n,f\mapsto (f(X_1),\dots,f(X_n))^T$, leads to
\[
R_n R_n^*=K_n,\qquad R_n^*R_n=\frac{1}{n}\sum_{i=1}^nk(X_i,\cdot)\otimes k(X_i,\cdot)=\Sigma_n.
\]
This correspondence readily allows to transfer perturbation problems for eigenvalues (and eigenprojections) of $K_\rho$ and $K_n$ to analogous problems for $\Sigma$ and $\hat\Sigma$.

%\subsubsection{Eigenvalues versus eigenprojections}
%There is a significant difference between eigenvalues and eigenprojections. Corollary \ref{CorEEPBound} shows that under \eqref{EqRGCond}, the Hilbert-Schmidt norm of $\hat P_j-P_j$ is dominated by the Hilbert-Schmidt norm of the first-order perturbation term. In contrast, Corollary \ref{CorEEVUpperBound} shows that there are two different regimes in the case of empirical eigenvalues, implying that beyond the relative rank condition \eqref{EqRelRankCond}, the second-order perturbation term dominates the bound. Interestingly, if \eqref{EqRGCond} holds with $ C_1(c_1+\sqrt{c_1})< 1/2$, then \eqref{EqEEVUpperBound} yields $\mathbb{E}^{1/2}(\hat\lambda_j-\lambda_j)^2<g_j/2+remainder$, meaning that the minimum of the $L^2$-distance between $\hat\lambda_j$ and the $(\lambda_k)$ is attained at $k=j$ (ignoring the remainder term). This eigenvalue separation property gives another explanation for the strong result in Corollary \ref{CorEEPBound}.

\subsubsection*{High-probability bounds}
While Theorems \ref{CorEEVUpperBound} and \ref{CorEEPBound} establish bounds in expectation, similar results can be derived for the $L^p$-norm and with high probability. This can be done using moment estimates and concentration inequalities for polynomials in independent sub-Gaussian random variables derived in \cite{L06,AW15,ALM18}. In the case of eigenprojections, however, this leads to lengthy expressions, as we have to compute many intricate norms. Interestingly, the random variable $\|\hat P_j-P_j\|_2^2$ is itself bounded, meaning that higher-order norms ultimately play only a minor role.

\bibliographystyle{plain}
\bibliography{lit}

\appendix
\section{Appendix}
The proof of our main result is based on a Taylor expansion with explicit remainder term. In this appendix, we will discuss how alternative approaches based on complex analytic arguments can be used.

\subsection{Rellich's perturbation theorem and its consequences}
In this section, we provide an alternative argument to validate the perturbation series from Corollary \ref{CorPertSeries} based on Rellich's theorem, see e.g. \cite{W86}. For simplicity, we assume that $\mathcal{H}=\mathbb{R}^d$. Let $j\geq 1$ be such that $g_j>0$.

For $z\in \mathbb{C}$, set $\Sigma(z)=\Sigma+z(\hat\Sigma-\Sigma)$ and $E(z)=\Sigma(z)-\Sigma=z(\hat\Sigma-\Sigma)$. We first apply Rellich's theorem, saying that there is an open set $\Omega=\bar{\Omega}\subseteq\mathbb{C}$ with $[0,1]\subseteq \Omega$ and holomorphic functions $\lambda_1,\dots,\lambda_d:\Omega\rightarrow \mathbb{C}$ and $u_1,\dots,u_d:\Omega\rightarrow \mathbb{C}^n$ such that $u_k^*u_l=\delta_{kl}$ and
\begin{equation}\label{EqSDholomorphic}
\Sigma(z)=\sum_{k=1}^d\lambda_j(z)P_k(z),\qquad P_j(z)=u_j(z)u_j^*(z)
\end{equation}
for all $z\in \Omega$, where $u_j^*(z)=\overline{u_j(\bar{z})}^T$. Since $u_j^*$ is holomorphic, we get that $P_j(z)$ is holomorphic. We suppose $\lambda_1(0)\geq\dots \geq \lambda_d(0)$ such that $\lambda_j(0)=\lambda_j$ and the $P_j(0)=P_j$ by uniqueness.

Similarly as in the proof of Lemma \ref{LemEVSeparation}, for every $z\in[0,1]$, we have
\begin{equation}\label{EqEVSep3}
|\lambda_j(z)-\lambda_j|\leq g_j/4
\end{equation}
as well as
\begin{equation}\label{EqEVSep4}
\lambda_{j+1}(z)-\lambda_{j+1}\leq (\lambda_j-\lambda_{j+1})/4,\quad \lambda_{j-1}(z)-\lambda_{j-1}\geq (\lambda_{j-1}-\lambda_{j})/4
\end{equation}

%By a compactness argument, using the continuity of the $\lambda_k$, we get that \eqref{EqEVSep} and \eqref{EqEVSep2} are valid for all $z$ in an open neighborhood $\Omega$ of $[0,1]$.

We now conclude the proof, by combining the previous steps with Lemma~\ref{LemCoeffBound}. By \eqref{EqEVSep3} and \eqref{EqEVSep4}, we conclude that for every $z\in[0,1]$, $\lambda_j(z)$ is a simple eigenvalue and it is the $j$-th largest eigenvalue of $\Sigma(z)$. Moreover, $P_j(z)$ is the corresponding spectral projector. In particular, we have $\hat\lambda_j=\lambda_j(1)$ and $\hat P_j=P_j(1)$. Since $P_j$ is holomorphic, it has a series representation near $0$. Moreover, by \cite{K95,C83}, it is given by
\begin{equation}\label{EqClassicalPertSeries}
\lambda_j(z)=\sum_{p\geq 0}z^p\lambda_j^{(p)},\qquad P_j(z)=\sum_{p\geq 0}z^pP_j^{(p)}.
\end{equation}
By Lemma \ref{LemCoeffBound}, both series converge absolutely (in Hilbert-Schmidt norm) in a region containing $[0,1]$. By possibly shrinking $\Omega$, \eqref{EqSDholomorphic} and \eqref{EqClassicalPertSeries} hold for all $z\in\Omega$ with $[0,1]\subseteq \Omega$. Since both sides of the equations are holomorphic in $\Omega$ and coincide in a small neighborhood of zero, they coincide by the uniqueness property of holomorphic functions. In particular, we have
\begin{equation}
\hat \lambda_j=\lambda_j(1)=\sum_{p\geq 0}\lambda_j^{(p)},\qquad \hat P_j=P_j(1)=\sum_{p\geq 0}P_j^{(p)}.
\end{equation}
This completes the proof.\qed

\subsection{Holomorphic functional calculus and its consequences}\label{SecHFC}

Another powerful machinery to derive perturbation bounds is given by the holomorphic functional calculus for linear operators, see e.g. Kato \cite{K95} and Chatelin \cite{C83}. Combining the holomorphic functional calculus with the eigenvalue separation in Lemma \ref{LemEVSeparation}, we get the following version of Theorem \ref{ThmTaylorSP}.
\begin{corollary}\label{CorEPTaylorExpHFC}
Suppose that $\delta_j<1/2$. Then, for each $p\geq 1$, we have
\begin{equation*}
\|\hat P_j-\sum_{n=0}^{p-1}P_j^{(n)}\|_2\leq  2\frac{(2\delta_j)^{p}}{1-2\delta_j}.
\end{equation*}
\end{corollary}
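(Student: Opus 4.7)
The proof plan is to represent $\hat P_j-\sum_{n=0}^{p-1}P_j^{(n)}$ as a contour integral via the holomorphic functional calculus, then to symmetrize the integrand using the weight $W=|R_j|^{1/2}+g_j^{-1/2}P_j$ that appears in the definition of $\delta_j$, and finally to combine a geometric-series bound in operator norm with a rank/structure argument to obtain the Hilbert--Schmidt bound.

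First, by Lemma~\ref{LemEVSeparation} the assumption $\delta_j<1/2$ forces $|\hat\lambda_j-\lambda_j|\leq\delta_j g_j<g_j/2$, while $\hat\lambda_{j\pm 1}$ stay at distance at least $(1-\delta_j)g_j>g_j/2$ from $\lambda_j$. Hence I may take $\Gamma=\{z\in\mathbb{C}:|z-\lambda_j|=g_j/2\}$, which encloses exactly $\lambda_j$ and $\hat\lambda_j$ among the spectra of $\Sigma$ and $\hat\Sigma$. The HFC formulas give $\hat P_j=(2\pi i)^{-1}\oint_\Gamma(z-\hat\Sigma)^{-1}\,dz$ and $P_j^{(n)}=(2\pi i)^{-1}\oint_\Gamma[(z-\Sigma)^{-1}E]^n(z-\Sigma)^{-1}\,dz$, and $p$-fold iteration of the second resolvent identity yields
\[
\hat P_j-\sum_{n=0}^{p-1}P_j^{(n)}=\frac{1}{2\pi i}\oint_\Gamma[(z-\Sigma)^{-1}E]^p(z-\hat\Sigma)^{-1}\,dz.
\]

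Next, writing $W^2=\sum_kw_kP_k$ with $w_j=g_j^{-1}$ and $w_k=|\lambda_k-\lambda_j|^{-1}$ for $k\neq j$, one has $(z-\Sigma)^{-1}=WM(z)W$ with $M(z)=\sum_k[w_k(z-\lambda_k)]^{-1}P_k$. Since $|z-\lambda_j|=g_j/2$ and $|z-\lambda_k|\geq|\lambda_k-\lambda_j|/2$ for $k\neq j$ on $\Gamma$, one checks $\|M(z)\|_\infty\leq 2$ pointwise. Setting $N:=WEW$, so that $\|N\|_\infty=\delta_j$, telescoping gives $[(z-\Sigma)^{-1}E]^p=W[M(z)N]^pW^{-1}$; the resolvent identity applied to $\hat M(z):=W^{-1}(z-\hat\Sigma)^{-1}W^{-1}$ yields $\hat M(z)=(I-M(z)N)^{-1}M(z)$, whence $\|\hat M(z)\|_\infty\leq 2/(1-2\delta_j)$. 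The integrand therefore factors as $W[M(z)N]^p\hat M(z)W$, and combining $\|W\|_\infty^2=g_j^{-1}$ with $|\Gamma|=\pi g_j$ gives the operator-norm bound
\[
\Bigl\|\hat P_j-\sum_{n=0}^{p-1}P_j^{(n)}\Bigr\|_\infty\leq \frac{g_j}{2}\cdot g_j^{-1}\cdot(2\delta_j)^p\cdot\frac{2}{1-2\delta_j}=\frac{(2\delta_j)^p}{1-2\delta_j}.
\]

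The main obstacle is upgrading this operator-norm bound to the stated Hilbert--Schmidt bound (with the explicit factor~$2$). My plan is to exploit the residue-theorem decomposition of the contour integral into the contribution at $\hat\lambda_j$, which equals $\hat P_j$ itself (since $(\hat\lambda_j-\Sigma)^{-1}E\hat P_j=\hat P_j$, iterated) and is therefore rank one, and the contribution at $\lambda_j$, analyzed via the Laurent expansion $(z-\Sigma)^{-1}=(z-\lambda_j)^{-1}P_j+S(z)$ with $S(\lambda_j)=-R_j$. Every nonzero term in the residue at $\lambda_j$ contains at least one factor $P_j$ and hence factors through the one-dimensional range of $P_j$, so a careful bookkeeping of the ranges of these rank-one contributions, combined with the inequality $\|X\|_2\leq\sqrt{\operatorname{rank}(X)}\,\|X\|_\infty$ applied to the combined residue operator and the same contour estimate as above, absorbs the constant~$2$ and yields the claimed Hilbert--Schmidt estimate.
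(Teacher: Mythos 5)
Your contour-integral argument up to the operator-norm estimate is correct and is essentially the paper's own proof: the same circle $\{z:|z-\lambda_j|=g_j/2\}$, justified by Lemma~\ref{LemEVSeparation}; the same symmetrization by $|R_j|^{1/2}+g_j^{-1/2}P_j$ and its inverse (the paper inserts these factors directly into the second Neumann series rather than packaging them as $M(z)$, $N$, $\hat M(z)$, but the estimates $\|M(z)\|_\infty\le 2$, $\|N\|_\infty=\delta_j$ and the resulting bound for the $n$-th term are identical); and the same conclusion $\|\hat P_j-\sum_{n=0}^{p-1}P_j^{(n)}\|_\infty\le(2\delta_j)^p/(1-2\delta_j)$.

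The gap is in your final step, the passage from the operator norm to the Hilbert--Schmidt norm with the factor $2$. Your plan is to split the remainder into the residue at $\hat\lambda_j$ and the residue at $\lambda_j$ and then invoke $\|X\|_2\le\sqrt{\operatorname{rank}(X)}\,\|X\|_\infty$. This cannot work as described, for two reasons. First, the residue at $\hat\lambda_j$ is exactly $\hat P_j$ (as you note), whose operator norm is $1$; estimating the two residues separately therefore destroys the smallness of the remainder, which rests entirely on the cancellation between them. Second, if you instead apply the rank inequality to the combined remainder $T=\hat P_j-\sum_{n=0}^{p-1}P_j^{(n)}$, you would need $\operatorname{rank}(T)\le 4$, which is false in general: although every individual summand $R_j^{(k_1)}E\cdots ER_j^{(k_{n+1})}$ with some $k_a=0$ has rank one, these rank-one operators have different ranges --- for instance $(I-P_j)P_j^{(3)}(I-P_j)$ already contains pieces whose ranges are spanned by $R_jEu_j$, $R_j^2Eu_j$ and $R_jER_jEu_j$ --- so the rank of $\sum_{n=0}^{p-1}P_j^{(n)}$ grows with $p$, and ``factoring through the one-dimensional range of $P_j$'' bounds the rank of each term but not of their sum. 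Be aware that the paper's own proof of this corollary also stops at the operator-norm estimate and leaves the conversion implicit; a route that genuinely yields a Hilbert--Schmidt bound is the enumerative argument behind Theorem~\ref{ThmTaylorSP}, which controls $\|\cdot\|_2$ of each term through $\||R_j|^{1/2}EP_j\|_2$ and Lemma~\ref{LemER} rather than through a rank count.
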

A similar result can be obtained in the case of eigenvalues. Due to some technical obstacles, the following bound includes a constant $C$ depending on the dimension $d$ of $\mathcal{H}$. We conjecture that the bound holds for $C=1/2$.
\begin{corollary}\label{CorEVTaylorExpHFC}
Suppose that $\delta_j<1/2$. Then, for each $p\geq 1$, we have
\begin{equation*}
|\hat \lambda_j-\sum_{n=0}^{p-1}\lambda_j^{(n)}|\leq Cg_j \frac{(2\delta_j)^{p}}{1-2\delta_j}.
\end{equation*}
where $C=1$ for $p=1$ and $C$ is a constant depending on $d$ otherwise.
\end{corollary}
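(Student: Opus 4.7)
The plan is to combine the contour integral representation of the perturbed eigenvalue with a symmetrized Neumann expansion of the resolvent. For $p=1$ the statement reduces to Lemma~\ref{LemEVSeparation}, since $|\hat\lambda_j-\lambda_j|\le \delta_j g_j \le g_j\cdot 2\delta_j/(1-2\delta_j)$ already gives $C=1$, so I focus on $p\ge 2$.

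Let $\Sigma(z)=\Sigma+zE$ and $R(\zeta,z)=(\Sigma(z)-\zeta I)^{-1}$, and take $\Gamma_j$ to be the circle $|\zeta-\lambda_j|=g_j/2$. Applying Lemma~\ref{LemEVSeparation} to each $\Sigma(z)$ (the symmetrized perturbation norm of $zE$ is $z\delta_j<1/2$ for $z\in[0,1]$) shows that $\Gamma_j$ encloses exactly one eigenvalue of $\Sigma(z)$, namely $\lambda_j(z)$, and stays at distance at least $(1/2-\delta_j)g_j$ from the spectrum of $\Sigma(z)$. The starting point is then the standard identity
\[
\lambda_j(z)-\lambda_j \;=\; -\frac{1}{2\pi i}\oint_{\Gamma_j}(\zeta-\lambda_j)\operatorname{tr}(R(\zeta,z))\,d\zeta.
\]

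The key step is a symmetric factorization of $R(\zeta,z)$. Set $T=|R_j|^{1/2}+g_j^{-1/2}P_j$, $\tilde E=TET$ (so $\|\tilde E\|_\infty=\delta_j$), and $D(\zeta)=R(\zeta)T^{-2}$. The operator $D(\zeta)$ is diagonal in the eigenbasis of $\Sigma$, and on $\Gamma_j$ the estimate $|\lambda_k-\zeta|\ge g_j/2$ together with $T^{-2}_k=g_j\wedge|\lambda_k-\lambda_j|$ yields $\|D(\zeta)\|_\infty\le 2$. Since $T$ and $D(\zeta)$ commute with $R(\zeta)$, a direct Neumann expansion of $(I+zR(\zeta)E)^{-1}$ gives
\[
R(\zeta,z) \;=\; T\,(I+zD(\zeta)\tilde E)^{-1} D(\zeta)\,T.
\]
Truncating the inner Neumann series at order $p-1$ and matching coefficients with the classical perturbation series (the $z^m$ term with $m\le p-1$ reproduces $z^m\lambda_j^{(m)}$ after contour integration, which can be verified by a residue calculation at $\zeta=\lambda_j$) leads to the explicit remainder
\[
\hat\lambda_j-\sum_{n=0}^{p-1}\lambda_j^{(n)} \;=\; \frac{(-1)^{p+1}}{2\pi i}\oint_{\Gamma_j}(\zeta-\lambda_j)\operatorname{tr}\!\Big((D(\zeta)\tilde E)^p(I+D(\zeta)\tilde E)^{-1}R(\zeta)\Big)\,d\zeta,
\]
where cyclicity of trace and the identity $T^2 D(\zeta)=R(\zeta)$ have been used.

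The final step is to bound the integrand via $|\operatorname{tr}(AR(\zeta))|\le \|A\|_\infty\,\|R(\zeta)\|_1$ with $A=(D(\zeta)\tilde E)^p(I+D(\zeta)\tilde E)^{-1}$: the operator norm of $A$ is at most $(2\delta_j)^p/(1-2\delta_j)$, while $\|R(\zeta)\|_1=\sum_k|\lambda_k-\zeta|^{-1}\le 2d/g_j$ since every summand is at most $2/g_j$ on $\Gamma_j$. Combined with $|\zeta-\lambda_j|=g_j/2$ and contour length $\pi g_j$, this yields the claimed bound with $C=d/2$. The main obstacle is precisely this $d$-dependent trace-class estimate: a dimension-free constant would require replacing the crude H\"older bound by an argument exploiting cancellation inside the contour integral (for instance a direct residue computation that uses the pole structure of $D(\zeta)$ at $\lambda_j$), and obtaining the conjectured $C=1/2$ appears to require such additional ideas.
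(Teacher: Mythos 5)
Your argument is correct and is essentially the paper's own proof: the same contour representation over $\gamma_j=\{|\zeta-\lambda_j|=g_j/2\}$, the same symmetrized Neumann expansion obtained by conjugating with $T=|R_j|^{1/2}+g_j^{-1/2}P_j$ (yielding $\|D(\zeta)\|_\infty\le 2$ and $\|D\tilde E\|_\infty\le 2\delta_j$), and the same dimension-dependent trace-versus-operator-norm step, which the paper also flags as the obstacle to a dimension-free constant. The only blemish is notational: the diagonal entries of $T^{-2}$ are $|\lambda_k-\lambda_j|$ for $k\neq j$ and $g_j$ for $k=j$ (not their minimum), but your conclusion $\|D(\zeta)\|_\infty\le 2$ on $\gamma_j$ is correct as stated.
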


We conclude that the perturbation series in Corollary \ref{CorPertSeries} also hold under the slightly weaker condition $\delta_j<1/2$. Note that this seems to be difficult to reach with our approach based on explicit remainder terms, in which case the natural condition is $\delta_j'<1/4$. Conversely, the bounds in Corollaries \ref{CorEPTaylorExpHFC} and \ref{CorEVTaylorExpHFC} are weaker than those in Theorems \ref{ThmTaylorSP} and \ref{ThmTaylorEV}. For instance, Corollary \ref{ThmTaylorEV} contains the factor $\Vert P_jE|R_j|^{1/2}\Vert_2^2$ instead of $g_j\delta_j^2$. In the special case  $p=1$, Corollary \ref{CorEVTaylorExpHFC} gives the bound $Cg_j\delta_j$, while Corollary \ref{PertBoundEV} provides the size of the first-order and second-order perturbation terms. In fact, we believe that our explicit approach has several advantages over the holomorphic functional calculus, since it is possible to deal more directly with the terms of the perturbation series. For instance, it seems out of reach to obtain Corollary \ref{CorImprovedBound3} (and the other consequences from Sections \ref{SecPertSeries} and \ref{SecTightPB}) using the holomorphic functional calculus.

\begin{proof}[Proof of Corollary \ref{CorEPTaylorExpHFC}]
The starting point is the Cauchy integral formula for spectral projectors using the notion of the resolvent. In fact, if $\lambda_j$ is simple, then we have
\[
P_j=-\frac{1}{2\pi i}\oint_{\gamma_j}\limits(\Sigma-zI)^{-1}\,dz
\]
with circle $\gamma_j=\{z:|z-\lambda_j|=g_j/2\}$ enclosing only $\lambda_j$ counterclockwise. The second main ingredient is Lemma \ref{LemEVSeparation}, stating that under the condition $\delta_j<1/2$, the circle $\gamma_j$ encloses $\hat\lambda_j$, while the remaining empirical eigenvalues lie (strictly) outside. Thus we also have
\[
\hat P_j=-\frac{1}{2\pi i}\oint_{\gamma_j}\limits(\hat \Sigma-zI)^{-1}\,dz.
\]
We now apply (formally) the second von Neumann series and verify its validity afterwards
\begin{equation}\label{Eq2NeumannSeries}
(\hat \Sigma-zI)^{-1}-(\Sigma-zI)^{-1}=\sum_{n\geq 1}(-1)^n(\Sigma-zI)^{-1}(E( \Sigma-zI)^{-1})^n.
\end{equation}
Inserting $|R_j|^{1/2}+g_j^{-1/2}P_j$ and its inverse appropriately, we have
\begin{align*}
&(\Sigma-zI)^{-1}(E( \Sigma-zI)^{-1})^n\nonumber\\
&=(\Sigma-zI)^{-1}(|R_j|^{-1/2}+g_j^{1/2}P_j)\\
&\ \ \ \times\Big((|R_j|^{1/2}+g_j^{-1/2}P_j)E(|R_j|^{1/2}+g_j^{-1/2}P_j)\\
&\ \ \ \ \ \ \ \times(|R_j|^{-1/2}+g_j^{1/2}P_j)(\Sigma-zI)^{-1}(|R_j|^{-1/2}+g_j^{1/2}P_j)\Big)^n\\
&\ \ \ \times(|R_j|^{1/2}+g_j^{-1/2}P_j)E(|R_j|^{1/2}+g_j^{-1/2}P_j)\\
&\ \ \ \times(|R_j|^{-1/2}+g_j^{1/2}P_j)(\Sigma-zI)^{-1}.
\end{align*}
Hence, for $z\in\gamma_j$,
\begin{align}
&\|(\Sigma-zI)^{-1}(E( \Sigma-zI)^{-1})^n\|_{\infty}\nonumber\\
&\leq \|(\Sigma-zI)^{-1}(|R_j|^{-1/2}+g_j^{1/2}P_j)\|_\infty^2\nonumber\\
&\ \ \ \times\|(|R_j|^{-1/2}+g_j^{1/2}P_j)(\Sigma-zI)^{-1}(|R_j|^{-1/2}+g_j^{1/2}P_j)\|_\infty^{n}\delta_j^n\nonumber\\
&\leq 4g_j^{-1}(2\delta_j)^n.\label{EqBoundResExpr}
\end{align}
We conclude that the right-hand side of \eqref{Eq2NeumannSeries} converges absolutely, from which we deduce the identity \eqref{Eq2NeumannSeries} e.g. by analytic continuation. By the residue theorem (cf. \cite{K95,C83}), we have
\[
P_j^{(n)}=\frac{(-1)^{n-1}}{2\pi i}\oint_{\gamma_j}(\Sigma-zI)^{-1}(E( \Sigma-zI)^{-1})^n\,dz,
\]
and we conclude, using \eqref{EqBoundResExpr},
\begin{align*}
&\|\hat P_j-\sum_{n=0}^{p-1}P_j^{(n)}\|_\infty\\&\leq \sum_{n\geq p}\frac{1}{2\pi}\oint_{\gamma_j}\limits\|(\Sigma-zI)^{-1}(E( \Sigma-zI)^{-1})^n\|_{\infty}\,dz\leq \sum_{n\geq p}(2\delta_j)^n.
\end{align*}
This completes the proof.
\end{proof}

\begin{proof}[Proof of Corollary \ref{CorEVTaylorExpHFC}]
This time, the starting point is
\begin{align}\label{EqEvHFC}
(\hat\lambda_j-\lambda_j)\hat P_j=-\frac{1}{2\pi i}\oint_{\gamma_j}(z-\lambda_j)(\hat \Sigma-zI)^{-1}\,dz.
\end{align}
First, taking the operator norm, and applying \eqref{Eq2NeumannSeries} and \eqref{EqBoundResExpr}, we get $
|\hat\lambda_j-\lambda_j|\leq g_j\delta_j/(1-2g_j)$. This yields the claim in the case $p=1$. For $p\geq 2$, we have to apply the trace instead of the operator norm (since the left hand side of \eqref{EqEvHFC} contains $\hat P_j$, while the right-hand side can be expanded in terms of the eigenprojections of $\Sigma$). First, by proceeding similarly as in \eqref{EqBoundResExpr}, we have
\[
\|(z-\lambda_j)(\Sigma-zI)^{-1}(E( \Sigma-zI)^{-1})^n\|_\infty\leq 2(2\delta_j)^n,\quad z\in\gamma_j.
\]
By the residue theorem, we have
\[
\lambda_j^{(n)}=\frac{(-1)^{n-1}}{2\pi i}\operatorname{tr}\oint_{\gamma_j}(z-\lambda_j)(\Sigma-zI)^{-1}(E( \Sigma-zI)^{-1})^n\,dz.
\]
and we conclude
\begin{align*}
&|\hat \lambda_j-\sum_{n=0}^{p-1}\lambda_j^{(n)}|\\
&\leq \sum_{n\geq p}\frac{1}{2\pi}|\operatorname{tr}\oint_{\gamma_j}(z-\lambda_j)(\Sigma-zI)^{-1}(E( \Sigma-zI)^{-1})^n\,dz|\\
&\leq \sum_{n\geq p}\frac{2d}{2\pi}\|\oint_{\gamma_j}(z-\lambda_j)(\Sigma-zI)^{-1}(E( \Sigma-zI)^{-1})^n\,dz\|_\infty\leq 2dg_j\sum_{n\geq p}(2\delta_j)^n.
\end{align*}
This completes the proof.
\end{proof}

%{\MW
%\subsection{Polynomial decay of eigenvalues}
%discuss or not?

%\begin{example}\label{ExPolDecayEV}
%if for some $\alpha>0$,
%\begin{equation}\label{EqPolDec}
%\lambda_j=j^{-\alpha-1},\quad j\ge 1.
%\end{equation}
%Then we have {\MW (to be verified)}
%\[
%cj\log j\leq \sum_{k\neq j}\frac{\lambda_k}{\lambda_k-\lambda_j}\leq \sum_{k\neq j}\frac{\lambda_k}{|\lambda_k-\lambda_j|}\leq Cj\log j.
%\]
%Hence, Corollaries \ref{CorEEVUpperBound} and \ref{CorEEVLowerBound} yield
%\[
%\forall j^2\log j\leq cn,\quad C^{-1}\Big(\frac{1}{\sqrt{n}}+\frac{j\log j}{n}\Big)\leq (\mathbb{E}(\hat\lambda_j/\lambda_j-1)^2)^{1/2}\leq C\Big(\frac{1}{\sqrt{n}}+\frac{j\log j}{n}\Big).
%\]
%\end{example}

%}
%\section{Add Material}
%%. In contrast to \cite{} Instead of making more assumptions on the pertubation as in \cite{} to ensure that the higher order terms can be summarized in the first order term, we apply
%\end{remark}
\end{document}